\numberwithin{equation}{section}
\declaretheorem[numberwithin = section]{theorem}
\declaretheorem[sibling = theorem]{lemma}
\declaretheorem[sibling = theorem]{corollary}
\declaretheorem[style = definition, sibling = theorem]{definition}
\declaretheorem[style = remark, sibling = theorem]{remark}
\newcommand{\Bm}{\mathscr{B}^-}
\newcommand{\Bp}{\mathscr{B}^+}
\newcommand{\Bpm}{\mathscr{B}^\pm}
\newcommand{\1}[1]{{\ensuremath{\mathbbm 1_{#1}}}}
\newcommand{\abs}[1]{{\left| #1 \right|}}
\newcommand{\seq}[1]{\left\{#1\right\}}
\DeclareMathOperator{\orm}{\mathrm{Orm}}
\newcommand{\norm}[1]{\left\|#1\right\|}
\newcommand{\R}{\mathbb{R}}
\newcommand{\F}{\widehat{\F}}
\newcommand{\ue}{u^\varepsilon}
\newcommand{\ve}{v^\varepsilon}
\newcommand{\test}{\varphi}
\newcommand{\marginlabel}[1]%
       {\mbox{}\marginpar{\raggedleft\hspace{0pt}\tiny{\textcolor{red}{#1}}}}
\begin{document}

\title{Path-dependent convex conservation laws}

\author[H. Hoel]{H. Hoel}
\address[H{\aa}kon Hoel]
{\newline Mathematics Institute of Computational Science and Engineering
\newline \'Ecole polytechnique f\'ed\'erale de Lausanne
\newline  EPFL / SB / MATH-CSQI, MA C1 644 (B{\^a}timent MA), Station 8
\newline CH-1015, Lausanne
\newline Switzerland}

\author[K. H. Karlsen]{K. H. Karlsen}
\address[Kenneth Hvistendahl Karlsen]
{\newline Department of mathematics
\newline University of Oslo
\newline P.O. Box 1053,  Blindern
\newline N--0316 Oslo, Norway} 
\email[]{k.h.karlsen@me.com }

\author[N. H. Risebro]{N. H. Risebro}
\address[Nils Henrik Risebro]
{\newline Department of mathematics
\newline University of Oslo
\newline P.O. Box 1053,  Blindern
\newline N--0316 Oslo, Norway} 
\email[]{nilshr@math.uio.no}

\author[E. B. Storr\o{}sten]{E. B. Storr\o{}sten}
\address[Erlend Briseid Storr\o{}sten]
{\newline Department of mathematics
\newline University of Oslo
\newline P.O. Box 1053, Blindern
\newline N--0316 Oslo, Norway} 
\email[]{erlenbs@math.uio.no}

\subjclass[2010]{Primary: 35L65; Secondary: 60H15}

\keywords{Hyperbolic conservation law, rough time-dependent flux, stochastic PDE, 
pathwise entropy solution, regularity}

\thanks{This work was supported by the Research Council of Norway (project 250674/F20).}

\begin{abstract}
For scalar conservation laws driven by a rough path $z(t)$, in the sense of 
Lions, Perthame and Souganidis in \cite{Lions:2013aa}, we 
show that it is possible to replace $z(t)$ by a piecewise 
linear path, and still obtain the same solution at a given time, under 
the assumption of a convex flux function in one spatial dimension. 
This result is connected to the spatial regularity of solutions. 
We show that solutions are spatially Lipschitz continuous for a given set 
of times, depending on the path and the initial data. Fine properties of the 
map $z \mapsto u(\tau)$, for a fixed time $\tau$, are studied. 
We provide a detailed description of the properties of the 
rough path $z(t)$ that influences the solution. 
This description is extracted by a ``factorization" of 
the solution operator (at time $\tau$). In a companion 
paper \cite{Hoel-num:2017}, we make use of the observations herein to 
construct computationally efficient numerical methods.
\end{abstract}

\date{\today}

\maketitle

\section{Introduction}\label{sec:Intro}

We are interested in scalar conservation laws of the form 
\begin{equation}\label{eq:sscl}
  \begin{cases}
    \partial_tu + \partial_xf(u)\dot{z} = 0 &\mbox{ on } (0,T) \times \R,\\
    \hphantom{\partial_tu + x}
    u(0,x)=u_0(x) &\mbox{ for } x \in \R,
  \end{cases}
\end{equation}
where $0 <T<\infty$ is some fixed final time. The (rough) path 
$z:(0,\infty) \rightarrow \R$, the initial value $u_0:\R \rightarrow \R$, and 
the flux $f:\R \rightarrow \R$ are given functions, whereas $u$ is 
the unknown function that is sought. The time derivative of $z(t)$ is 
denoted by $\dot{z}$. Regarding the flux, the standing assumption is
\begin{equation}\label{eq:Af}\tag{\mbox{$\mathcal{A}_f$}}
	\mbox{$f\in C^2$ and $f$ is strictly convex.} 
\end{equation}

Bear in mind that \eqref{eq:sscl} reduces to a standard 
conservation law in the event $z(t) = t$. 
It is well known that such equations are well posed within the framework of 
Kru{\v{z}}kov entropy solutions \cite{Dafermos:2010fk} or, equivalently, kinetic 
solutions \cite{Perthame:2002qy}. More precisely, assuming 
for example $u_0 \in (L^\infty \cap L^1)(\R)$, there exists a unique 
function $u \in C([0,T];L^1(\R))$ satisfying $u(0,x) = u_0(x)$ and
\begin{equation}\label{eq:KruzEnt}
	\partial_tS(u) + \partial_xQ(u)\dot{z} \le 0
	\quad \text{in the sense of distributions},
\end{equation}
for all convex entropy, entropy-flux pairs $(S,Q)$, i.e., $S\in C^2$ 
convex and $Q' = S'f'$. 

If $z(t)$ is a Brownian path (i.e., a realization of a Brownian motion), 
then $z(t)$ is merely H\"older continuous (infinite variation) and the 
conservation law \eqref{eq:sscl} is no longer well defined; in this case one 
could replace \eqref{eq:sscl} by the stochastic 
partial differential equations (SPDE)
\begin{equation}\label{eq:stratonovich}
	\partial_tu + \partial_xf(u)\circ dz= 0,
\end{equation}
where $\circ$ denotes the Stratonovich differential. Aiming for a different 
approach, Lions, Perthame, and Souganidis \cite{Lions:2013aa} recently 
introduced a pathwise notion of entropy/kinetic solution to \eqref{eq:sscl}, defined 
for any $z \in C([0,T])$, which is consistent with the notion 
of Kru{\v{z}}kov entropy solution for regular paths $z(t)$.

According to \cite{Lions:2013aa}, 
$u \in L^\infty\left([0,T];L^1(\R)\right)\cap L^\infty(\R\times [0,T])$ 
is called a pathwise entropy/kinetic solution to \eqref{eq:sscl} provided 
there is a non-negative, bounded measure $m$ on $\R \times \R \times [0,T]$ 
such that for all $\rho \in C^\infty_c(\R)$, 
$\chi(\xi,u) := \1{0 < \xi < u}-\1{u < \xi < 0}$ satisfies
 \begin{equation}\label{eq:PathwiseEntSol}
 	\partial_t\int_\R \chi(\xi,u(t,x))\rho(x-f'(\xi)z(t))\,dx 
	= \int_\R \rho(x-f'(\xi)z(t))\partial_\xi m(t,x,\xi)\,dx,
 \end{equation}
in the weak sense on $\R \times [0,T]$. Informally, the motivation behind 
the notion of pathwise solutions \eqref{eq:PathwiseEntSol} comes 
from writing the usual kinetic formulation of \eqref{eq:sscl},
$$
d \chi + f'(\xi) \cdot \nabla \chi\, \dot{z}= \partial_\xi m,
$$
for a bounded measure $m(t,x,\xi)\ge 0$ and a function $u(t,x)$ (entropy solution) such 
that $\chi=\chi(t,x,\xi):=\chi(\xi, u(t,x))$. Next, one uses 
the ``method of characteristics" to remove the rough path. 
The result is that the function
$$
v=v(t,x,\xi):=\chi\left(t,x+f'(\xi)z(t),\xi\right)
$$
satisfies the following kinetic equation 
without the rough drift term:
\begin{equation}\label{eq:transformed}
	\begin{split}
		\partial_t v  & = \left(\partial_\xi  m\right) \left(t,x+f'(\xi)z(t),\xi \right)
		\\ &=\partial_\xi \Bigl(m(t,x+f'(\xi) z(t),\xi)\Bigr) 
		\\ & \qquad - f''(\xi)z(t)\, 
		\left(\partial_x m\right)\left(t,x+f'(\xi) z(t),\xi\right).
	\end{split}
\end{equation}
The defining equation \eqref{eq:PathwiseEntSol} constitutes 
a weak formulation of \eqref{eq:transformed}. Various results concerning 
existence, uniqueness, and stability of pathwise solutions are found in the works 
\cite{Lions:2012fk,Lions:2013aa,Lions:2014aa}. The theory of pathwise solutions has been further 
developed in \cite{Gess:2014aa,Gess:2015aa}, see also \cite{Hofmanova:2016aa} 
and \cite{Bailleul:2015aa,Deya:2016aa}.

It is proved in \cite{Lions:2013aa} that the pathwise 
solution is stable with respect to uniform convergence 
of the path. More precisely, assuming $u_0 \in BV(\R)$ ($u_0$ is of 
bounded variation), we have the following 
result \cite[Theorem~3.2]{Lions:2013aa}: Let $u^i$ be 
the pathwise entropy/kinetic solution of \eqref{eq:sscl} with path $z^i$ 
and initial condition $u^i_0$, for $i = 1,2$. 
Then there is a constant $C > 0$ such that for $t\in [0,T]$,
 \begin{equation}\label{eq:stabilityPes}
    \begin{split}
      &\norm{u^1(t) - u^2(t)}_{L^1(\R)} \le \norm{u^1_0 - u^2_0}_{L^1(\R)} \\
      &\hphantom{XXX}+C\Bigg[\,  \norm{f'}_\infty\left(\norm{u^1_0}_{BV(\R)} 
      + \norm{u^2_0}_{BV(\R)}\right)\abs{z^1(t)-z^2(t)}\\
      &\hphantom{XXXXXX} +\sqrt{\sup_{s\in(0,t)}\abs{z^1(s) 
      -z^2(s)}\norm{f''}_\infty\left(\norm{u^1_0}_{L^2(\R)}^2
      +\norm{u^2_0}_{L^2(\R)}^2\right)}\, \Bigg],
    \end{split}
  \end{equation}
Consequently, given a sequence of regular (say, Lipschitz) 
paths $\seq{z^n}_{n \geq 0}$ converging uniformly to $z$ 
as $n \rightarrow \infty$, the corresponding 
Kru{\v{z}}kov entropy solutions $\seq{u^n}_{n \geq 0}$ of 
\eqref{eq:sscl} converges to the entropy/kinetic pathwise solution 
$u$ in $C([0,T];L^1(\R))$ as $n \rightarrow \infty$. As such, the 
interpretation of \eqref{eq:sscl} in terms of \eqref{eq:PathwiseEntSol} is 
associated with the Stratonovich interpretation of \eqref{eq:sscl}. 
In view of the consistency between \eqref{eq:KruzEnt} 
and \eqref{eq:PathwiseEntSol}, in what follows we will
refer to Kru{\v{z}}kov entropy solutions and pathwise 
entropy/kinetic solutions both simply as \emph{entropy solutions}. 

In this paper, we approximate the entropy solution $u$ of \eqref{eq:sscl} 
by a sequence $\seq{u^n}_{n \geq 0}$ of solutions utilizing 
piecewise linear approximations $\seq{z^n}_{n \geq 0}$ 
of the rough path $z \in C([0,T])$. The ``continuous dependence on the data" 
estimate \eqref{eq:stabilityPes} ensures that this approximation converges 
to the correct solution of \eqref{eq:sscl}. A motivation for exploring 
such approximations is their relevance to numerical methods.  
The computational difficulties associated with 
solving \eqref{eq:sscl} numerically stem from the infinite variation 
of the rough path $z(t)$. This forces the time step to be very 
small due to the well-known CFL stability condition, 
linking the temporal and spatial discretization parameters. 
Our main result, valid for convex flux functions $f$, states that it is 
possible to replace the rough path by 
a piecewise linear path of finite variation, and still obtain the 
same solution at a fixed time. In \cite{Hoel-num:2017} 
we make use of this result to construct computationally 
efficient finite volume methods.

Let us discuss in more detail our results relating to 
path-dependence of entropy solutions. Suppose the path $z(t)$ is piecewise 
linear and continuous, and let $u$ be the corresponding entropy solution 
to \eqref{eq:sscl}. Fix a time $\tau \in [0,T]$. We seek the ``simplest" 
path $\tilde{z}$ such that the corresponding 
solution $\tilde{u}$ satisfies $\tilde{u}(\tau,\cdot) = u(\tau,\cdot)$.  
To motivate, fix a time interval $[t_1,t_2]$ with $0 \le t_1 < t_2 \le \tau$, and 
suppose for simplicity that $z(t) \geq z(t_1)$ on $[t_1,t_2]$. 
Let $v$ be the entropy solution to
\begin{displaymath}
 \partial_tv + \partial_xf(v) = 0, \qquad v(z(t_1),x) = u(t_1,x).
\end{displaymath}
Then $u(t,x) = v(z(t),x)$ on $[t_1,t_2]$ in either of the two following cases:
\begin{itemize}
 \item[(i)] $z(t)$ is monotone on $[t_1,t_2]$.
 \item[(ii)] $v$ is a classical solution (no shocks) on $\seq{z(t)\,:t \in [t_1,t_2]}$.
\end{itemize}
Consequently, we may replace parts of the path $z$ satisfying either (i) or (ii) by 
straight line segments, i.e., $z$ by the path  
\begin{displaymath}
  \tilde{z}(t) := \begin{cases}
                  z(t_1) + (t-t_1)\frac{z(t_2)-z(t_1)}{t_2-t_1} \mbox{ for $t \in [t_1,t_2]$,}\\
		  z(t) \mbox{ else.}
                 \end{cases}
\end{displaymath} 
As the solution $\tilde u$ (corresponding to $\tilde z$) satisfies 
$\tilde{u}(t_2) =  v(\tilde z(t_2)) = v(z(t_2)) = u(t_2)$ it follows that $u(\tau) = \tilde u(\tau)$. 
In view of this, we can ``simplify" the path $z(t)$ by replacing the parts where 
it satisfies either $(i)$ or $(ii)$ by straight line segments. It is easy to determine 
which parts satisfies (i), i.e., where the path is monotone. 
Considering (ii), we need to determine the parts of the path $z$ 
on which $u$ (and $v$) is a classical solution (without shocks). To this end, let us recall 
the so-called Ole{\u\i}nik estimate for strictly convex fluxes 
$f$ (and $\dot{z}=1$) \cite{Dafermos:2010fk}:
$$
\frac{f'(u(t,y))-f'(u(t,x))}{y-x} \le \frac{1}{t}, \qquad y>x, \  t>0.
$$
For example with $f=u^2/2$, the only admissible shocks are 
those for which the left value is larger than the right.
Similarly, with $f=u^2/2$ and $\dot{z}=-1$, only upward jumping 
shocks are admissible. When $z'(t)$ oscillates (i.e., takes 
on positive and negative values, say $\pm 1$), one observes 
that shocks in $u$ can only exist when the path $z$ takes 
on values not already assumed at some earlier point in time. 
Consequently, starting from a piecewise linear path $z$, it is 
possible to ``inductively" construct a new path $\tilde z(t)$, with smaller
total variation, by replacing appropriate parts of $z(t)$ by linear segments. 
This inductive procedure, which we describe in detail in an
upcoming section, gives rise to a ``minimal'' path associated with 
the final time $\tau$ and the initial data $u_0$. We refer to 
the resulting path as the \emph{oscillating running min/max} 
path and label it $\orm_{\tau,u_0}(z)$ (See Definition~\ref{def:Orm} below). 
Setting $\tilde{z} = \orm_{\tau,u_0}(z)$, we have 
$\tilde{u}(\tau) = u(\tau)$. Note that the application of the 
Ole{\u\i}nik estimate depended on $\dot{z}$ being piecewise constant. 
However, for sufficiently smooth $u_0$, it turns out that that 
$z \mapsto \orm_{\tau,u_0}(z)$ is well-defined for any continuous path $z$.  
Indeed, for a general path $z$, we proceed by suitable 
piecewise linear approximation. 

This approach may be viewed as a factorization of the solution operator. 
To see how this is related to the construction of the map 
$z \mapsto \orm_{\tau,u_0}(z)$, we identify two paths $z$ and $\tilde z$ as long as 
$u(\tau) = \tilde u(\tau)$, where $z \mapsto u(\tau)$ and $\tilde z \mapsto \tilde u(\tau)$. 
This naturally leads to a factorization of the solution operator 
(one for each fixed time $\tau$) as a composition 
of a quotient map and an injective map, see~Figure~\ref{fig:Factorization}. 
Up to precomposition by a nondecreasing function, the 
quotient map may be identified with the map $z \mapsto \orm_{\tau,u_0}(z)$. 
The injective map is associated with the solution operator 
restricted to piecewise linear paths. 

Another question raised in this work is related to the optimal 
choice of paths relative to the continuous dependence estimate \eqref{eq:stabilityPes}. 
To be more precise, in view of the above discussion, there exists for 
each path $z$ a multitude of paths $\tilde z$ such that, for a 
given time $\tau  \in [0,T]$ and initial 
condition $u_0$, the corresponding solutions $u$ and $\tilde u$ 
to \eqref{eq:sscl} satisfy $u(\tau) = \tilde u(\tau)$. In order to 
improve \eqref{eq:stabilityPes} one may search for 
paths $\tilde z_1, \tilde z_2$ satisfying 
$u^1(\tau) = \tilde u^1(\tau), u^2(\tau) = \tilde u^2(\tau)$ such that 
\begin{displaymath}
	 \sup_{0 \leq t \leq \tau} \seq{\abs{\tilde z_1(t)-\tilde z_2(t)}} 
	 \mbox{ is as small as possible.}
\end{displaymath}
In Theorem~\ref{thm:EqDist} below, it is shown that this 
minimization problem may be bounded in terms of a 
second minimization problem solvable by dynamic programming.

Before ending this introduction, we mention that recently many researchers 
studied the effect of adding randomness to conservation laws and 
other related nonlinear partial differential equations. This includes 
stochastic transport equations, which bears some 
resemblance to \eqref{eq:stratonovich},
\begin{equation}\label{eq:transport}
	du +  b(x)\cdot \nabla u  \,dt + \nabla u \circ dW(t) =0,
\end{equation}
where $b(x)$ is a low-regularity velocity field 
and the ``transportation noise" is driven by a 
Wiener process $W(t)$. For some representative results, see 
e.g.~\cite{Attanasio:2011fj,Flandoli:2010yq,Mohammed:2015aa,Neves:2015aa}.
In a different direction, many mathematical papers 
\cite{Bauzet:2012kx,Biswas:2014gd,Chen:2011fk,Debussche:2010fk,Debussche:2015aa,Debussche:2016aa,E:2000lq,Hofmanova:2013aa,Kim2003,Karlsen:2015ab,Feng:2008ul,Vallet:2009uq,Vallet:2000ys} have studied the effect of It\^{o} stochastic forcing on conservation laws,
\begin{equation}\label{eq:source}
	du +  \nabla\cdot f(u)  \,dt=\sigma(u) \, dW(t),
\end{equation}
where $f,\sigma$ are nonlinear functions and $W(t)$ is a (finite or infinite dimensional) 
Wiener process. Numerical methods are looked at in 
\cite{Audusse:2015aa,Bauzet:2016ab,Bauzet:2016aa,Bauzet:2015aa,Holden:1997fk,Karlsen:2016aa,Dotti:2016aa,Dotti:2016ab,Kroker:2012fk}.

The remaining part of this paper is organized as follows: 
In Section~\ref{sec:mainResults}, the main results of the paper are presented, without proofs, along with the notation necessary to make the statements precise. In Section~\ref{seq:Proofs} proofs of the given results are presented.

\section{Main results}\label{sec:mainResults}
To state the main results precisely, we introduce some notation and definitions. 
The regularity of $u_0$ is quantified by two numbers 
$0 \le M_+,M_- \le \infty$ satisfying
\begin{equation}\label{eq:LowUpOnInitial}
 -M_- \le \frac{f'(u_0(y))-f'(u_0(x))}{y-x} \le M_+,
 \qquad x,y\in \R, \, x<y.
\end{equation}
Denote by $C_0([0,T]) = \seq{z \in C([0,T])\,:\,z(0) = 0}$ the space of 
continuous paths starting at the origin. For a given path 
$z \in C_0([0,T])$, we introduce the ``truncated'' running 
min/max functions, which are defined by
\begin{equation}\label{eq:TruncRunningMaxMin}
  \rho^+_z(t) := \max \seq{\frac{1}{M_-},\max_{0 \le s \le t}\seq{z(s)}} 
  \mbox{ and } \rho^-_z(t) := \min \seq{-\frac{1}{M_+},\min_{0 \le s \le t}\seq{z(s)}},
\end{equation}
cf.~Figure~\ref{fig:def_plot}. Here we use the 
convention that $0^{-1} = \infty$. Consequently, if $M_- = 0$, 
then $\rho^+_z(t) = \infty$ for all $t  \in [0,\tau]$. A path $z \in C_0([0,T])$ is called
\emph{piecewise linear} if there is a finite sequence
$\seq{t_n}_{n=0}^N$ with $0=t_0<t_1<\cdots<t_N=T$, such that
\begin{equation*}
  z(t)=z(t_n) +
  \left(t-t_n\right)\frac{z(t_{n+1})-z(t_n)}{t_{n+1}-t_n}\qquad
  \text{for $t\in [t_n,t_{n+1}]$.}
\end{equation*}
Furthermore, we define the sets where 
$\rho^\pm_z$ is strictly increasing/decreasing by
\begin{equation}\label{eq:BpBmDef}
 \begin{split}
 \Bp_z := \seq{t \in [0,T]\,:\, \inf \seq{s \in [0,T]\,:\, \rho_z^+(s) \geq \rho_z^+(t)} = t},\\
 \Bm_z := \seq{t \in [0,T]\,:\, \inf \seq{s \in [0,T]\,:\, \rho_z^-(s) \le \rho_z^-(t)} = t}.
 \end{split}
\end{equation}

The next lemma summarizes the essential properties of these sets.

\newcommand{\TextBpBmProp}{
 Let $\Bpm_z$ be defined by \eqref{eq:BpBmDef}. Then
 \begin{itemize}
  \item[(i)] $\Bpm_z \subset \seq{t \in [0,T]\,:\, z(t) = \rho_z^\pm(t)} \cup \seq{0}$. 
  Furthermore $\Bp_z \cap \Bm_z = \seq{0}$.  
  \item[(ii)] $\Bpm_z$ are closed with respect to increasing sequences, i.e., 
  if $\seq{t_n}_{n \geq 0} \subset \Bpm_z$ satsifies $t_n \uparrow t$ for 
  some $t \geq 0$, then $t \in \Bpm_z$. Hence 
  \begin{equation*}
   \sup\seq{\Bpm_z \cap [0,\tau]} \in \Bpm_z \mbox{ for any } \tau \in [0,T].
  \end{equation*}
  \item[(iii)] Let $0 \le t_1 < t_2 \le T$. Suppose $(t_1,t_2] \cap \Bpm_z = \emptyset$. Then $\rho_z^\pm$ is constant on $[t_1,t_2]$ respectively.
  \item[(iv)] Suppose $\rho_z^\pm$ is left differentiable on $(0,T]$. Then 
  \begin{equation*}
   \begin{split}
   \Bp_z &= \mathrm{cl}_-\left(\seq{t \in (0,T]\,:\, 
   \partial_- \rho_z^+(t) > 0}\right) \cup \seq{0}, \\
   \Bm_z &= \mathrm{cl}_-\left(\seq{t \in (0,T]\,:\, 
   \partial_- \rho_z^-(t) < 0}\right) \cup \seq{0},
   \end{split}
  \end{equation*}
  where $\partial_-$ denotes the left derivative and $\mathrm{cl}_-$ 
  denotes the closure with respect to increasing sequences. 
  Consequently, for piecewise linear $z$ there exists 
  $0 \le N^\pm < \infty$ and $0 \le s_1^\pm 
  < t_1^\pm < \dots < s_{N^\pm}^\pm < t_{N^\pm}^\pm \le T$ such that 
  \begin{equation*}
   \Bpm_z \setminus \seq{0} = \bigcup_{n = 1}^{N^\pm} (s_n^\pm,t_n^\pm],
  \end{equation*}
  where we use the convention that the union is empty if $N^\pm = 0$.
 \end{itemize}
 }
\begin{lemma}\label{lemma:BpBmProp}
 \TextBpBmProp
\end{lemma}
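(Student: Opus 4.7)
\smallskip

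\noindent\emph{Proof proposal.} By the obvious symmetry $z \mapsto -z$ (which interchanges $\rho^+_z$ with $-\rho^-_z$ and $M_-$ with $M_+$), it suffices to prove each claim for $\Bp_z$. I rely throughout on two structural facts: $\rho^+_z$ is continuous and nondecreasing (as the max of a continuous function with a constant), and by definition $t \in \Bp_z$ with $t > 0$ is equivalent to $\rho^+_z(s) < \rho^+_z(t)$ for every $s \in [0,t)$.

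For (i), if $t > 0$ lies in $\Bp_z$ but $z(t) < \rho^+_z(t)$, continuity of $z$ yields $z < \rho^+_z(t)$ on a neighborhood of $t$, which forces $\rho^+_z$ to equal its terminal value on some nontrivial interval $(t-\delta,t)$, contradicting the strict inequality. The claim $\Bp_z \cap \Bm_z = \seq{0}$ is then immediate from $\rho^+_z \geq 1/M_- > 0 > -1/M_+ \geq \rho^-_z$. For (ii), if $t_n \uparrow t$ with $t_n \in \Bp_z$, then for any $s < t$, choosing $n$ with $t_n > s$ and using monotonicity gives $\rho^+_z(s) < \rho^+_z(t_n) \leq \rho^+_z(t)$; the supremum statement follows by extracting an increasing subsequence from an approximating one. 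For (iii), if $(t_1,t_2] \cap \Bp_z = \emptyset$ while $\rho^+_z$ is nonconstant on $[t_1,t_2]$, pick $s_1 < s_2$ in $[t_1,t_2]$ with $\rho^+_z(s_1) < \rho^+_z(s_2)$; then $t^* := \inf\seq{s : \rho^+_z(s) \geq \rho^+_z(s_2)}$ lies in $\Bp_z$ by construction, and by continuity and monotonicity $t_1 < t^* \leq s_2 \leq t_2$, a contradiction.

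For (iv), set $A := \seq{t \in (0,T] : \partial_-\rho^+_z(t) > 0}$. The inclusion $\mathrm{cl}_-(A) \cup \seq{0} \subset \Bp_z$ is the easy direction: a strictly positive left derivative at $t$ prevents $\rho^+_z$ from being constant on any left neighborhood of $t$, and since $\rho^+_z$ is nondecreasing this yields $\rho^+_z(s) < \rho^+_z(t)$ for all $s < t$, so $A \subset \Bp_z$; combining with (ii) gives the inclusion. The reverse inclusion is the main obstacle, and I expect to reduce it to the following elementary auxiliary fact: \emph{a continuous function $g$ on $[a,b]$ with $\partial_- g \equiv 0$ on $(a,b]$ is constant}. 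This I would prove by a maximum-principle argument applied to $g(s) - g(a) - \varepsilon(s-a)$: its continuous maximum on $[a,b]$ cannot be attained at any point where the left derivative of the auxiliary function equals $-\varepsilon < 0$, hence must be attained at $s = a$, giving $g(b) - g(a) \leq \varepsilon(b-a)$ for all $\varepsilon > 0$. Applying this contrapositively: if $t \in \Bp_z \setminus \seq{0}$ with $\partial_-\rho^+_z(t) = 0$, then $\partial_-\rho^+_z$ cannot vanish identically on any $(t-\delta,t]$ (otherwise $\rho^+_z$ would be constant on $[t-\delta,t]$, contradicting $\rho^+_z(t-\delta) < \rho^+_z(t)$), so $A$ meets every $(t-\delta,t)$ and $t \in \mathrm{cl}_-(A)$.

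The finite-decomposition statement for piecewise linear $z$ is then a direct corollary: $\rho^+_z$ is piecewise linear with finitely many knots, hence $\partial_-\rho^+_z$ is piecewise constant with nonnegative values, and $A$ is a finite union of half-open intervals $(s_n^+, t_n^+]$ — half-open because at a knot where a positive-slope segment ends the left derivative inherits the positive slope, while at a knot where a zero-slope segment ends it vanishes. In this discrete setting $A$ already coincides with $\mathrm{cl}_-(A)$, yielding the stated form of $\Bp_z \setminus \seq{0}$.
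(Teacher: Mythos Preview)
Your argument follows essentially the same route as the paper's: both proofs hinge on the characterization ``$t\in\Bp_z$, $t>0$, iff $\rho_z^+(s)<\rho_z^+(t)$ for all $s<t$'', and both reduce part~(iv) to the auxiliary fact that vanishing left derivative on an interval forces constancy. Your treatment of (iii) is in fact tidier than the paper's --- directly exhibiting $t^*=\inf\{s:\rho_z^+(s)\ge\rho_z^+(s_2)\}$ as a point of $\Bp_z\cap(t_1,t_2]$ avoids the paper's $\varepsilon$-separation detour.

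There is one genuine (if small) gap. In (i), your chain $\rho_z^+\ge 1/M_->0>-1/M_+\ge\rho_z^-$ presupposes $M_\pm<\infty$. When $M_-=M_+=\infty$ the truncations are $1/M_-=0=-1/M_+$, so from $z(t)=\rho_z^+(t)\ge0$ and $z(t)=\rho_z^-(t)\le0$ you only obtain $z(t)=0$, not a contradiction. The paper closes this case by observing that then $\rho_z^+(t)=0=\rho_z^+(0)$, whence $\inf\{s:\rho_z^+(s)\ge\rho_z^+(t)\}=0\ne t$, contradicting $t\in\Bp_z\setminus\{0\}$. You should add this one extra line.
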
 
We may now give the precise definition of the Oscillating Running Min/Max.
\begin{definition}[Oscillating Running Min/Max]\label{def:Orm}
 Given a path $z \in C_0([0,T])$, define the 
 sequence $\seq{\tau_n}_{n \geq 0}$ inductively by 
 \begin{align*}
  \tau_0& = \tau, \\
   \tau_{n+1}& = \begin{cases}
                \max\seq{\Bp_z \cap [0,\tau_n]} &\mbox{ if } \tau_n \in \Bm_z,\\
 		\max\seq{\Bm_z \cap [0,\tau_n]}  &\mbox{ if } \tau_n \in \Bp_z,\\
		\max\seq{(\Bp_z \cup \Bm_z) \cap [0,\tau_n]} 
		& \mbox{ if } \tau_n \notin \Bp_z \cup \Bm_z,
                \end{cases}
 \end{align*}
 for $n = 0,1,2,\dots$. If there exists an integer $0 \le N < \infty$ such that $\tau_N = 0$, 
 then we define $\orm_{\tau,M_\pm}(z)$ (Oscillating Running Min/Max) as 
 the piecewise linear interpolation of $\seq{(\tau_n,z(\tau_n))}_{n = 0}^N$. 
\end{definition}

\begin{figure}[h]\label{fig:def_plot}
 \centering
  \includegraphics[width=0.9\textwidth]{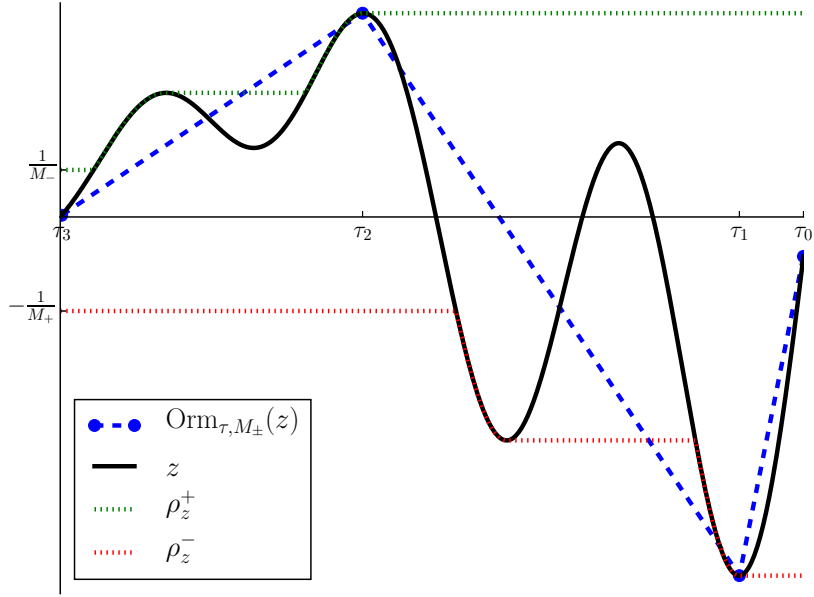}
 \caption{Illustration of $\rho^+_z,\rho^-_z$ and 
 $\orm_{\tau,M_\pm}(z)$ for a given path $z$.}
\end{figure}

Even though the Oscillating Running Min/Max only 
depends on $z,\tau,M_-,M_+$, we are often interested in 
a specific initial condition $u_0$ satisfying \eqref{eq:LowUpOnInitial} 
for some given numbers $0 \le M_-,M_+ \le \infty$. In such situations we often 
write $\orm_{\tau,u_0}(z)$ instead of $\orm_{\tau,M_\pm}(z)$. 
Let us mention that $\orm_{\tau,M_\pm}(z)$ is well defined for 
any path $z \in C_0([0,T])$, given that $0 \le \min\seq{M_+,M_-} < \infty$, 
see Lemma~\ref{lemma:FiniteOrm}. In view of the above discussion, there 
emerges a natural equivalence relation on the set of paths. 
For convenience, the relation is here defined on an arbitrary interval. 

\begin{definition}[Equivalence of paths]\label{def:EquivOfPaths} 
Fix a time interval $[t_1,t_2]\subset [0,\infty)$ and two paths 
$z_1,z_2 \in C([t_1,t_2])$. Let $u^i$ be the entropy solution to 
\begin{equation*}
 \partial_t u^i +  \partial_x f(u^i)\dot{z}_i = 0, 
 \qquad u^i(t_1) = u_0\in (L^1 \cap L^\infty)(\R),
\end{equation*} 
on $[t_1,t_2]$ for $i = 1,2$. If $u^1(t_2) = u^2(t_2)$, we say that $z_1$ is 
equivalent to $z_2$, written $z_1 \sim z_2$, 
on $[t_1,t_2]$ with initial condition $u_0$.
\end{definition}

We are now ready to state the result alluded to above.
\newcommand{\TextOrmEquiv}{Let $f$ satisfy \eqref{eq:Af}, $z \in C_0([0,\tau])$. 
If $\orm_{\tau,M_\pm}(z)$ is well-defined, then 
$z \sim \orm_{\tau,M_\pm}(z)$ on $[0,\tau]$ for any 
$u_0 \in (L^1 \cap L^\infty)(\R)$ satisfying \eqref{eq:LowUpOnInitial}.}
\begin{theorem}\label{thm:OrmEquiv}
\TextOrmEquiv
\end{theorem}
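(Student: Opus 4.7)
My plan is to handle piecewise linear $z$ first by a backward induction over the ORM times, and then to extend to arbitrary continuous $z$ by approximation. Suppose $z$ is piecewise linear; by Lemma~\ref{lemma:BpBmProp}(iv) the construction in Definition~\ref{def:Orm} terminates, giving $0=\tau_N<\tau_{N-1}<\cdots<\tau_0=\tau$. By the semigroup property of entropy solutions, it suffices to show that for each $n\in\{0,1,\ldots,N-1\}$, replacing $z$ on $[\tau_{n+1},\tau_n]$ by the linear interpolation through $(\tau_{n+1},z(\tau_{n+1}))$ and $(\tau_n,z(\tau_n))$ leaves $u(\tau_n,\cdot)$ unchanged. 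Performing these replacements successively from $n=N-1$ down to $n=0$ --- noting that a modification on $[\tau_{n+1},\tau_n]$ affects neither the datum $u(\tau_n,\cdot)$ nor $z$ on $[\tau_n,\tau]$ --- transforms $z$ into $\orm_{\tau,M_\pm}(z)$ while preserving $u(\tau,\cdot)$.

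The single-step claim is the heart of the argument. On $[\tau_{n+1},\tau_n]$, Definition~\ref{def:Orm} together with Lemma~\ref{lemma:BpBmProp}(i),(iii) forces at least one of $\rho_z^\pm$ to stay constant on the interval, traps the path in the strip $[\rho_z^-(\tau_n),\rho_z^+(\tau_{n+1})]$ (with symmetric statements in the other cases), and locates the endpoints at the strip's extremes. Following the motivation in the introduction, let $v$ solve the standard conservation law $\partial_s v+\partial_x f(v)=0$ with $v(z(\tau_{n+1}),\cdot)=u(\tau_{n+1},\cdot)$. Provided $v$ is classical (shock-free) on the $s$-interval traced by $z$ on $[\tau_{n+1},\tau_n]$, the composition $u(t,x)=v(z(t),x)$ solves \eqref{eq:sscl} both for the original path and for its linear replacement on $[\tau_{n+1},\tau_n]$, so the two agree at $t=\tau_n$. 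The problem therefore reduces to a spatial regularity statement: propagate appropriate one-sided Lipschitz bounds on $f'(u(\tau_{n+1},\cdot))$ through the ORM steps and verify that $v$ extends classically across the strip.

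This regularity propagation is the main obstacle and the place where hypothesis \eqref{eq:LowUpOnInitial} is decisive. The tool is the Ole\u\i nik estimate: during a $\dot z>0$ excursion the upper one-sided Lipschitz bound on $\partial_x f'(u)$ improves at rate $1/s$ while the lower bound is inherited; during a $\dot z<0$ excursion --- equivalent to running the concave-flux equation forward --- the roles swap. The truncation thresholds $1/M_-$ and $-1/M_+$ in \eqref{eq:TruncRunningMaxMin} are calibrated exactly so that the ``budget'' needed for each backward excursion is majorized by $\rho_z^\pm$ at the relevant time, provided the alternating regimes of Definition~\ref{def:Orm} update the bounds correctly. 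Careful bookkeeping across the three cases should then give classicality of $v$ on the required strip, completing the piecewise linear case. For general $z\in C_0([0,\tau])$, I would approximate uniformly by piecewise linear $z^k$, check (using Lemma~\ref{lemma:BpBmProp}(ii),(iv)) that $\orm_{\tau,M_\pm}(z^k)\to\orm_{\tau,M_\pm}(z)$ uniformly along a subsequence, and pass to the limit in $z^k\sim\orm_{\tau,M_\pm}(z^k)$ using \eqref{eq:stabilityPes} on both sides.
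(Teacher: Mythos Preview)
Your overall architecture (piecewise linear case first, then uniform approximation) matches the paper's, but there are two genuine gaps.

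\textbf{The single-step claim is not correct as stated.} Take the case $\tau_{n+1}\in\Bm_z$, $\tau_n\in\Bp_z$. Then $(\tau_{n+1},\tau_n]\cap\Bm_z=\emptyset$, so $\rho_z^-$ is constant and $z(t)\ge z(\tau_{n+1})$ on the interval; but $(\tau_{n+1},\tau_n]\cap\Bp_z$ contains $\tau_n$, so $\rho_z^+$ \emph{increases} and $z$ reaches $z(\tau_n)=\rho_z^+(\tau_n)$, which may strictly exceed $\rho_z^+(\tau_{n+1})$. Hence $z$ is \emph{not} trapped in $[\rho_z^-(\tau_n),\rho_z^+(\tau_{n+1})]$. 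The forward solution $v$ with data $u(\tau_{n+1},\cdot)$ is classical only for $s<\rho_z^+(\tau_{n+1})$ (this is exactly the budget $1/M_-(\tau_{n+1})=\rho_z^+(\tau_{n+1})-z(\tau_{n+1})$ from Lemma~\ref{lemma:OSBC1Path}); for $s$ beyond that, shocks form and the identity $u(t,\cdot)=v(z(t),\cdot)$ fails for the oscillating $z$. The linear replacement is monotone, so Lemma~\ref{lemma:EntSub} handles it regardless of shocks---but that asymmetry is the point: you cannot compare both paths to a single classical $v$. The paper's remedy (Lemma~\ref{lemma:EquivalencePiecewise}) is to subdivide further: on pieces lying in $\Bpm_z$ the path equals $\rho_z^\pm$ and any two paths with the same $\rho^\pm$ coincide there; on the complementary pieces \emph{both} $\rho_z^\pm$ are constant, the path is genuinely trapped, and Corollary~\ref{cor:PiecewiseEquiv}(3) applies. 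Your ORM intervals each contain both kinds of pieces, so the induction step needs this finer decomposition.

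\textbf{The approximation step is unjustified.} The assertion $\orm_{\tau,M_\pm}(z^k)\to\orm_{\tau,M_\pm}(z)$ for a generic piecewise linear $z^k\to z$ is delicate: the interpolation times $\tau_n^k$ need not converge (think of $z$ with a flat maximum), and Lemma~\ref{lemma:BpBmProp}(ii),(iv) say nothing about continuity of $z\mapsto\orm(z)$. The paper avoids this entirely by choosing the approximants to interpolate $z$ at the ORM times $\{\tau_m\}$ of $z$ itself, plus additional points $\{t_m\}$. Then $z_0=\orm_{\tau,M_\pm}(z)$ \emph{exactly}, and one proves $z_n\sim z_0$ directly for each $n$ (via Lemmas~\ref{lemma:EquivalencePiecewise} and~\ref{lemma:PiecewisePrecomEquiv}), never needing $\orm(z_n)$ to converge to anything. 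Finally, the paper removes the $BV$ hypothesis on $u_0$ by the mollification of Lemma~\ref{lemma:InitApprox}, a step your outline omits.
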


As mentioned above, for piecewise linear paths, the 
Ole{\u\i}nik estimate implies that the solution 
$u(t)$ is (spatially Lipschitz) continuous for $t$ in certain regions of the path. 
In the following theorem this result is extended, via Theorem~\ref{thm:OrmEquiv},
to the case of a general path $z \in C_0([0,T])$.

\newcommand{\TextOSB}{Assume $f$ satisfies \eqref{eq:Af}, 
$z \in C_0([0,T])$ and $u_0 \in (L^1 \cap L^\infty)(\R)$. 
Let $u$ be the entropy solution to \eqref{eq:sscl}. Then, for any $t \in [0,T]$,
 \begin{equation*}
     -\frac{1}{\rho^+_z(t)-z(t)} \le \frac{f'(u(t,y\pm))-f'(u(t,x\pm))}{y-x} 
     \le \frac{1}{z(t)-\rho^-_z(t)},
 \end{equation*}
 for all $-\infty < x < y < \infty$. Here, $u(t,x\pm)$ denotes respectively
 the right and left limits. We apply 
 the convention $(\infty)^{-1} = 0$ and $(0)^{-1} = \infty$.}

\begin{theorem}\label{thm:OSB}
 \TextOSB 
\end{theorem}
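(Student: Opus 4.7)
The approach combines Theorem~\ref{thm:OrmEquiv} with a segment-wise application of Oleinik-type estimates, after an approximation step.

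\emph{Step 1 (Reduction to piecewise linear paths).} Approximate $z \in C_0([0,T])$ uniformly by piecewise linear paths $z^n$ (approximating $u_0$ by $BV$ data if necessary). The stability estimate~\eqref{eq:stabilityPes} yields $u^n(t,\cdot) \to u(t,\cdot)$ in $L^1(\R)$, hence almost everywhere along a subsequence, while $\rho^\pm_{z^n}(t) \to \rho^\pm_z(t)$ follows from the uniform convergence of the paths. Closedness of the claimed one-sided Lipschitz bound under a.e.\ convergence, after passing to left- and right-continuous $BV$-representatives of $f'\circ u^n(t,\cdot)$, reduces the problem to piecewise linear $z$.

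\emph{Step 2 (Reduction via Orm).} For piecewise linear $z$, let $\tilde z := \orm_{t,u_0}(z)$, which is well defined by Lemma~\ref{lemma:BpBmProp}(iv). Theorem~\ref{thm:OrmEquiv} gives $\tilde u(t,\cdot) = u(t,\cdot)$; by construction $\tilde z(t) = z(t)$, and since the Orm breakpoints include the last running-min and running-max times of $z$ on $[0,t]$, the truncated running extrema satisfy $\rho^\pm_{\tilde z}(t) = \rho^\pm_z(t)$. The path $\tilde z$ is piecewise linear with segments of strictly alternating monotonicity. On an increasing Orm segment $[\tau_{n+1},\tau_n]$, the change of variable $\sigma = \tilde z(s)-\tilde z(\tau_{n+1})$ transforms \eqref{eq:sscl} into the standard convex conservation law $\partial_\sigma v + \partial_x f(v) = 0$ on $\sigma\in[0,\Delta_n]$ with $\Delta_n := \tilde z(\tau_n)-\tilde z(\tau_{n+1})$; on a decreasing segment, the analogous substitution produces a conservation law with the strictly concave flux $-f$.

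\emph{Step 3 (Propagation of sharp bounds segment by segment).} Write $a(s), b(s) \ge 0$ for the reciprocals of the best upper and (negated) lower one-sided Lipschitz constants of $f'\circ u(s,\cdot)$. Combining the classical Oleinik estimate (for convex flux) and its dual for the concave flux with the characteristic-spreading formula propagating the complementary bound across a pre-shock smooth evolution, one obtains the update rules
\begin{align*}
  \text{increasing segment of length $\Delta$:}\quad & a\mapsto a+\Delta, & b\mapsto (b-\Delta)_+;\\
  \text{decreasing segment of length $\Delta$:}\quad & a\mapsto (a-\Delta)_+, & b\mapsto b+\Delta,
\end{align*}
where the clamping at zero records the irrevocable loss of bound when shocks form. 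Iterating from $(a(0),b(0)) = (1/M_+,1/M_-)$ through the alternating Orm segments, a telescoping computation (equivalent to recognising $1/M_+ + \tilde z(s)$ reflected at zero as $\tilde z(s) - \rho^-_{\tilde z}(s)$, and symmetrically for $b$) gives
\[
 a(t) = z(t)-\rho^-_z(t), \qquad b(t) = \rho^+_z(t)-z(t),
\]
which, inserted into $-1/b(t)\le\cdot\le 1/a(t)$, is the claimed bound.

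\emph{Main obstacle.} The principal subtlety lies in the clamping step: once the relevant reciprocal saturates mid-segment (shocks have formed and the bound becomes $\pm\infty$), the subsequent reverse-monotone segment must \emph{refresh} that bound to $\Delta$ rather than inherit the saturated value, which is exactly the initial-data-free content of the classical Oleinik estimate applied from scratch on the new segment. A secondary issue is upgrading the estimate from a.e.\ to the pointwise one-sided limits $u(t,x\pm)$; this follows once spatial $BV$-regularity of $u(t,\cdot)$ is established from the very one-sided Lipschitz bound being proved, a standard vanishing-viscosity approximation removing the apparent circularity.
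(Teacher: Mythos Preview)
Your proposal is correct and follows essentially the same strategy as the paper: segment-wise Ole\u{\i}nik estimates on a piecewise linear path, identification of the resulting reciprocal bounds with $z(t)-\rho_z^-(t)$ and $\rho_z^+(t)-z(t)$, and approximation for general $z$ and $u_0$.

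The organization differs slightly. The paper first proves the bound for \emph{arbitrary} piecewise linear paths (Lemma~\ref{lemma:OSBC1Path}), inducting over the linear pieces with three cases according to whether the next breakpoint lies in $\Bp_z$, $\Bm_z$, or neither; only afterwards does it invoke Theorem~\ref{thm:OrmEquiv} (together with the $u_0$-approximation of Lemma~\ref{lemma:InitApprox} to force finite $M_\pm$) to handle general continuous $z$. You instead approximate $z$ to piecewise linear first and then reduce further to the Orm path before iterating. Your Step~2 is therefore a logically redundant detour---Theorem~\ref{thm:OrmEquiv} already rests on Lemma~\ref{lemma:OSBC1Path}, which is precisely the piecewise-linear bound you reprove in Step~3---but it buys you the strictly alternating segment structure, so your Skorokhod-reflection identity replaces the paper's case analysis. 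Both routes yield the same update rules (your $a\mapsto a+\Delta$, $b\mapsto(b-\Delta)_+$ is exactly the content of Lemma~\ref{lemma:OneSidedBounds}), and your ``refresh'' observation in the obstacle paragraph is the paper's use of $\Theta$ in that lemma.
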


\begin{remark}
 Apriori, the left/right limits should be interpreted as essential 
 limits and the statement should be restricted 
 to points $-\infty < x < y < \infty$ such that these limits exists. 
 However, whenever the lower or upper bound is 
 finite, it implies that $u(t,\cdot)$ belongs to $BV_{\mathrm{loc}}(\R)$ 
 and the left/right limits exist in the classical sense.
\end{remark}

\begin{remark}
In \cite{Gassiat:2016aa}\footnote{We became aware 
of the work \cite{Gassiat:2016aa} in the final stage of writing this paper.}, the 
authors investigate regularity properties of solutions to the equation
$$
dv+\frac{1}{2}\abs{Dv}^2\circ dz = F(x,v,Dv,D^2v)\, dt,
$$
where $z$ is a continuous path, and $F$ is a 
nonlinear function meeting the standard assumptions 
from the theory of viscosity solutions of fully nonlinear 
degenerate parabolic PDEs. An $L^\infty$-bound
on the second derivative $D^2v$ is 
established in \cite{Gassiat:2016aa}. 
In the special case
\begin{equation}\label{eq:gess-1D}
	du+\partial_x\left(\frac{u^2}{2}\right)\circ dz = 0,
	\qquad u= \partial_x v,
\end{equation}
this estimate reduces to the Lipschitz ($W^{1,\infty}$) bound
$$
\underset {x\neq y}{\mathrm{ess}\, \sup}\abs{\frac{u(x)-u(y)}{x-y}}\le 
\frac{1}{\max\limits_{s\in [0,t]} z(s)- z(t)} \vee
\frac{1}{z(t)-\min\limits_{s\in [0,t]} z(s)}.
$$
This estimate is similar to the one provided by Theorem \ref{thm:OSB}, 
which in the special case $f(u)=u^2/2$ can be recast as
$$
-\frac{1}{\rho_z^+(t)-z(t)}\le \frac{u(t,x)-u(t,y)}{x-y}\le 
\frac{1}{z(t)-\rho_z^-(t)}, \qquad \text{for a.e.~$x,y\in \R$}.
$$

Although the results are similar, both relying on the strict 
convexity of the flux but with the one in 
\cite{Gassiat:2016aa} restricted to $f=u^2/2$, the 
proofs are different. We work at the level of conservation 
laws and use the method of generalized characteristics. 
The argument in \cite{Gassiat:2016aa} relies on 
semiconvexity preservation properties of 
Hamilton-Jacobi equations. 
\end{remark}

In view of Theorem~\ref{thm:OrmEquiv}, the equivalence class 
of a given path is nontrivial. The following result yields 
a condition sufficient for two paths to be equivalent. Let 
\begin{displaymath}
 \mathscr{A}_\tau := \seq{\alpha:[0,\tau] \rightarrow 
 [0,\tau]\,:\, \alpha \mbox{ is nondecreasing and surjective}}.
\end{displaymath}

\newcommand{\TextGeneralEquiv}{
 Let $f$ satisfy \eqref{eq:Af}. Fix $z_1, z_2 \in C_0([0,\tau])$ 
 such that $z_1(\tau) = z_2(\tau)$. 
 Suppose there exist paths $\alpha_i \in \mathscr{A}_\tau$, $i =1,2$, such that 
 \begin{equation}\label{eq:path-relation}
  \rho^+_{z_1} \circ \alpha_1 = \rho^+_{z_2} \circ \alpha_2 \qquad 
  \mbox{ and } \qquad \rho^-_{z_1} \circ \alpha_1 = \rho^-_{z_2} \circ \alpha_2.
 \end{equation} 
Then $z_1 \sim z_2$ on $[0,\tau]$ for all $u_0 \in 
(L^\infty \cap L^1)(\R)$ satisfying \eqref{eq:LowUpOnInitial}.}

\begin{theorem}\label{thm:GeneralEquiv}
 \TextGeneralEquiv
\end{theorem}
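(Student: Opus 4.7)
The plan is to exploit Theorem~\ref{thm:OrmEquiv} together with the observation that $\orm_{\tau, u_0}(z)$ is in fact a functional of the pair $(\rho_z^+, \rho_z^-)$ alone. The hypothesis \eqref{eq:path-relation} says that after a monotone time change the two paths share their truncated running extrema, so both reduce to a common Oscillating Running Min/Max path and Theorem~\ref{thm:OrmEquiv} then closes the loop.

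The first step is \emph{reparametrization invariance}: for every $\alpha \in \mathscr{A}_\tau$ and every admissible $u_0$, $z \sim z \circ \alpha$ on $[0,\tau]$. Since a nondecreasing surjection $[0,\tau] \to [0,\tau]$ has connected image, $\alpha$ is automatically continuous, and I approximate it uniformly by smooth strictly increasing $\alpha_n$ with $\alpha_n(0)=0$ and $\alpha_n(\tau) = \tau$. For such $\alpha_n$, a chain-rule computation on the kinetic formulation \eqref{eq:PathwiseEntSol} shows that $\tilde u_n(t, x) := u(\alpha_n(t), x)$ is the entropy solution associated with the path $z \circ \alpha_n$, so $\tilde u_n(\tau) = u(\alpha_n(\tau)) = u(\tau)$. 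Uniform continuity of $z$ gives $z \circ \alpha_n \to z \circ \alpha$ uniformly, and \eqref{eq:stabilityPes} then passes to the limit, identifying $u(\tau)$ with the solution at time $\tau$ for the path $z \circ \alpha$.

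Set $\tilde z_i := z_i \circ \alpha_i$ for $i = 1,2$. Continuity of $\alpha_i$ gives $\alpha_i([0,t]) = [0,\alpha_i(t)]$ and hence $\rho^\pm_{\tilde z_i}(t) = \rho^\pm_{z_i}(\alpha_i(t))$, so \eqref{eq:path-relation} becomes $\rho^+_{\tilde z_1} = \rho^+_{\tilde z_2}$ and $\rho^-_{\tilde z_1} = \rho^-_{\tilde z_2}$ on $[0,\tau]$. Since $\Bpm_{\tilde z_i}$ depend on $\tilde z_i$ only through $\rho^\pm_{\tilde z_i}$ via \eqref{eq:BpBmDef}, the inductive sequence $\seq{\tau_n}$ of Definition~\ref{def:Orm} is identical for $i = 1, 2$. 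By Lemma~\ref{lemma:BpBmProp}(i), $\tilde z_i(\tau_n) \in \{\rho^+_{\tilde z_i}(\tau_n), \rho^-_{\tilde z_i}(\tau_n)\}$ and therefore agrees across $i$. Together with $\tilde z_1(\tau) = z_1(\tau) = z_2(\tau) = \tilde z_2(\tau)$ at the endpoint, this yields $\orm_{\tau, u_0}(\tilde z_1) = \orm_{\tau, u_0}(\tilde z_2)$ whenever the objects are well-defined.

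To conclude, when $\min\{M_+, M_-\} < \infty$ the map $\orm_{\tau, u_0}$ is well-defined on $C_0([0,\tau])$ by Lemma~\ref{lemma:FiniteOrm}, and two applications of Theorem~\ref{thm:OrmEquiv} combined with Step 1 give
\[
z_1 \sim \tilde z_1 \sim \orm_{\tau, u_0}(\tilde z_1) = \orm_{\tau, u_0}(\tilde z_2) \sim \tilde z_2 \sim z_2.
\]
When $M_+ = M_- = \infty$, truncation in \eqref{eq:TruncRunningMaxMin} does not disturb \eqref{eq:path-relation}, so I mollify $u_0$ into a family $u_0^\varepsilon$ satisfying \eqref{eq:LowUpOnInitial} with finite $M_\pm^\varepsilon$, apply the previous case to $u_0^\varepsilon$, and send $\varepsilon \to 0$ using \eqref{eq:stabilityPes}. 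The main obstacle is Step 1: the chain rule is a one-line computation for smooth strictly monotone reparametrizations, but $\alpha$ may be constant on large sets (so it admits no inverse), and the extension to this case must be routed through the stability estimate for pathwise entropy solutions.
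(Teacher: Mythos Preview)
Your argument follows the same skeleton as the paper's: both reduce to Theorem~\ref{thm:OrmEquiv} via the observation that $\orm_{\tau,u_0}$ depends on $z$ only through $(\rho_z^+,\rho_z^-)$, so that $\orm_{\tau,u_0}(z_1\circ\alpha_1)=\orm_{\tau,u_0}(z_2\circ\alpha_2)$ once \eqref{eq:path-relation} holds. The one genuine difference is in how you bridge $z_i$ and $z_i\circ\alpha_i$. The paper does this at the level of the piecewise linear output, proving $\orm_{\tau,u_0}(z)\sim\orm_{\tau,u_0}(z\circ\alpha)$ (Lemma~\ref{lemma:OrmCompEquiv}) by tracking how the interpolation points transform under $\alpha^{-1}$ and then invoking the elementary reparametrization Corollary~\ref{cor:PiecewiseEquiv}(2) for piecewise linear paths. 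You instead prove the stronger statement $z\sim z\circ\alpha$ directly at the level of pathwise solutions, approximating $\alpha$ by diffeomorphisms and passing to the limit with \eqref{eq:stabilityPes}. Your route is conceptually cleaner and avoids the bookkeeping of Lemma~\ref{lemma:BpBmComp}; the paper's route keeps everything in the piecewise-linear world where the needed tools are already in place.

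There is one small gap. Your Step~1 invokes \eqref{eq:stabilityPes}, which requires $u_0\in BV(\R)$, but the theorem only assumes $u_0\in(L^1\cap L^\infty)(\R)$ with \eqref{eq:LowUpOnInitial}. You address this by mollifying $u_0$, but only in the case $M_+=M_-=\infty$; when $\min\{M_+,M_-\}<\infty$ you apply Step~1 to the original $u_0$, which need not lie in $BV$. The fix is exactly what the paper does: mollify $u_0$ via Lemma~\ref{lemma:InitApprox} \emph{in all cases} to obtain $u_0^\varepsilon\in BV$ with $M_\pm^\varepsilon<\infty$, run the entire argument (including Step~1) for $u_0^\varepsilon$, and pass to the limit using only the $L^1$-contraction in the initial data (same path on each side), which does not require $BV$. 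Your own observation that replacing $M_\pm$ by smaller $M_\pm^\varepsilon$ preserves \eqref{eq:path-relation} shows this modification is harmless.
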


\begin{remark}
 We note that the existence of $\alpha_1,\alpha_2$ is closely 
 related to the problem of optimal transport (on $\R$) \cite{Villani:2003vn}. 
 Here we have two (continuous) transference plans, represented 
 by $\alpha_1, \alpha_2$, which should satisfy two transportation problems. 
 Recall that $\rho^+_{z_i}$ is constant whenever $\rho_{z_i}^-$ is 
 decreasing, while $\rho^-_{z_i}$ is constant as long as 
 $\rho_{z_i}^+$ is increasing, $i = 1,2$.
\end{remark}

 It seems likely that the condition \eqref{eq:path-relation} is 
 also necessary, at least on a more restricted space 
 of paths, cf.~Lemma~\ref{lemma:PiecewisePrecomEquiv}. 
 For $z_1,z_2$ as in Theorem~\ref{thm:GeneralEquiv}, we write 
 $z_1 \sim_\circ z_2$ if there exist $\alpha_1,\alpha_2 \in \mathscr{A}_\tau$, such 
 that $\rho^\pm_{z_1} \circ \alpha_1 = \rho^\pm_{z_2} \circ \alpha_2$. 
 It is obvious that the relation $\sim_\circ$ is both reflexive and 
 symmetric, i.e., that $z_1 \sim_\circ z_1$ and $z_1 \sim_\circ z_2$ 
 implies $z_2 \sim_\circ z_1$. To check that it is transitive, fix 
 $z_1,z_2,z_3 \in C_0([0,\tau])$ such that 
 $z_1 \sim_\circ z_2$ and $z_2 \sim_\circ z_3$, i.e., suppose 
 there exist nondecreasing surjective maps 
 $\alpha_i,\beta_i$, $i = 1,2$, such that 
 \begin{equation*}
  \rho^\pm_{z_1} \circ \alpha_1 = \rho^\pm_{z_2} \circ \alpha_2 
  \quad \mbox{ and } \quad \rho^\pm_{z_2} \circ \beta_1 = \rho^\pm_{z_3} \circ \beta_2.
 \end{equation*}
 Then there exist, at last in the piecewise linear setting, nondecreasing  
 surjective maps $\zeta_1,\zeta_2$ such that 
 $\alpha_2 \circ \zeta_1 = \beta_1 \circ \zeta_2$, 
 cf.~Lemma~\ref{lemma:PiecewisePrecomEquiv}. Hence,
 \begin{equation*}
  \rho^\pm_{z_1}\circ (\alpha_1\circ\zeta_1) = \rho^\pm_{z_2}\circ\alpha_2\circ\zeta_1 
		= \rho^\pm_{z_2}\circ\beta_1\circ\zeta_2 
		= \rho^\pm_{z_3}\circ(\beta_2\circ\zeta_2),
 \end{equation*}
and so $z_1 \sim_\circ z_3$.

For a path $z \in C_0([0,\tau])$, we denote its equivalence class with 
respect to $\tau$ and $M_\pm$ by $[z]_{\tau,M_\pm}$, that is, 
\begin{displaymath}
	[z]_{\tau,M_\pm} = \seq{\tilde{z} \in C_0([0,\tau])\,:\, \tilde{z} \sim z 
	\mbox{ on $[0,\tau]$ for any $u_0$ 
	satisfying \eqref{eq:LowUpOnInitial}}}.
\end{displaymath}
Fix $0 \leq M_+^i,M_-^i \leq \infty$, $i = 1,2$. 
Regarding the continuous dependence estimate \eqref{eq:stabilityPes}, one 
may exchange the uniform distance between two paths by the 
distance between two equivalence classes:
\begin{multline}\label{eq:eqDist}
 	\norm{[z_1]_{\tau,M^1_\pm}-[z_2]_{\tau,M^2_\pm}}_\infty  \\
  	:=\inf\seq{\norm{\zeta_1-\zeta_2}_\infty \,:\,\zeta_1,\zeta_2 \in C_0([0,\tau]), 
  	\zeta_1 \overset{\tau,M^1_\pm}{\sim} z_1 , \zeta_2 \overset{\tau,M^2_\pm}{\sim} z_2},
\end{multline}
where $\zeta \overset{\tau,M_\pm^i}{\sim} z$ is shorthand notation 
for $\zeta \sim z$ on $[0,\tau]$ for any initial condition 
$u_0$ satisfying \eqref{eq:LowUpOnInitial} with $M_\pm = M_\pm^j$. 
Therefore, \eqref{eq:stabilityPes} may be replaced by
\begin{equation}\label{eq:stabilityPesII}
    \begin{split}
      &\norm{u^1(\tau) - u^2(\tau)}_{L^1(\R)} \le \norm{u^1_0 - u^2_0}_{L^1(\R)} \\
      &\hphantom{XXX}+C\Bigg[\,  \norm{f'}_\infty\left(\norm{u^1_0}_{BV(\R)} 
      + \norm{u^2_0}_{BV(\R)}\right)\abs{z^1(\tau)-z^2(\tau)}\\
      &\hphantom{XXXXXX} +\sqrt{\norm{[z^1]_{\tau,M^1_\pm} 
      -[z^2]_{\tau,M^2_\pm}}_{\infty}\norm{f''}_\infty\left(\norm{u^1_0}_{L^2(\R)}^2
      +\norm{u^2_0}_{L^2(\R)}^2\right)}\, \Bigg],
    \end{split}
\end{equation}
for $u_0^i$ satisfying $\eqref{eq:LowUpOnInitial}$ with 
$M_\pm = M_\pm^i$, $i = 1,2$. Assuming 
$M_\pm = M_\pm^1 = M_\pm^2$, in view of 
Theorem~\ref{thm:GeneralEquiv}, one may 
hypothesize that the distance \eqref{eq:eqDist} can be 
estimated in terms of the minimization problem
\begin{multline*}
	\norm{[z_1]_{\tau,M_\pm}-[z_2]_{\tau,M_\pm}}_\infty \\
	\leq \inf_{\alpha_1,\alpha_2 \in \mathscr{A}_\tau}
	\seq{\max_\pm \seq{\norm{\rho_{z_1}^\pm \circ \alpha_1
	-\rho_{z_2}^\pm\circ \alpha_2}_\infty, \abs{z_1(\tau)-z_2(\tau)}}}.
\end{multline*}
 Our next result shows that this is indeed the case. 
 To make the statement precise, we need to introduce some notation. 
 Suppose $z \in C_0([0,\tau])$ is such that $\orm_{\tau,M_\pm}(z)$ 
 is well defined. Denote by $\mathcal{T}_z := \seq{\tau_n}_{n= 0}^N$ 
 the interpolation points associated with $\orm_{\tau,M_\pm}(z)$, 
 cf.~Definition~\ref{def:Orm}, and set 
 \begin{equation}\label{eq:TpmDef}
 	\mathcal{T}_{z}^\pm :=  \mathcal{T}_{z} \cap \Bpm_{z} \setminus \seq{0}.
 \end{equation}
 According to Lemma~\ref{lemma:BpBmProp}~(i),
\begin{displaymath}
  \mathcal{T}_{z}^\pm = \seq{\tau_n\,:\,0 \leq n \leq N-1 
  \mbox{ such that } z(\tau_n) = \rho_z^\pm(\tau_n)}.
 \end{displaymath}
For any $\alpha \in \mathscr{A}_\tau$, $\mathcal{T}^\pm_{z \circ \alpha} 
= \seq{\alpha^{-1}(t)\,:\, t \in \mathcal{T}_z^\pm}$, where $\alpha^{-1}$ 
denotes the generalized inverse of $\alpha$, i.e., 
\begin{equation}\label{eq:gen-inv}
	\alpha^{-1}(t) := \inf\seq{s\in [0,\tau]\,:\, \alpha(s) \geq t}.
\end{equation} 

 \begin{theorem}\label{thm:EqDist}
 Suppose $f$ satisfies \eqref{eq:Af}. Let $0 < \tau \leq T$, $z_1,z_2 \in C_0([0,\tau])$ 
 and $0 \leq M_+^i,M_-^i < \infty$, $i = 1,2$. Let $\rho_{z_i}^\pm$ be 
 defined with respect to $M^i_\pm$, $i = 1,2$, cf.~\eqref{eq:TruncRunningMaxMin}. 
 Then, for any $\alpha_1, \alpha_2 \in \mathscr{A}_\tau$, define
 \begin{multline*}
   \Phi[z_1,z_2](\alpha_1,\alpha_2)  \\ 
   :=\max\left(
      \begin{split}
	  &\seq{\abs{\rho_{z_1}^+ \circ \alpha_1(t)-\rho_{z_2}^+ \circ \alpha_2(t)}\,:\, 
	  t \in \mathcal{T}_{z_1 \circ \alpha_1}^+ \cup \mathcal{T}_{z_2 \circ \alpha_2}^+ 
	  \cap (\kappa^+,\tau]} \\ 
	  &\cup \seq{\abs{\rho_{z_1}^- \circ \alpha_1(t)-\rho_{z_2}^- \circ \alpha_2(t)}\,:\, 
	  t \in \mathcal{T}_{z_1 \circ \alpha_1}^- \cup \mathcal{T}_{z_2 \circ \alpha_2}^- 
	  \cap (\kappa^-,\tau]}\\ 
	  &\cup \seq{\abs{z_1(\tau)-z_2(\tau)}}
      \end{split}
	\right),
 \end{multline*}
 where $\kappa^\pm = \kappa^\pm_{z_1,z_2}(\alpha_1,\alpha_2)$ are defined by
 \begin{align*}
  \kappa^+ &= \inf\seq{t \in [0,\tau]\,:\,\max \seq{\rho_{z_1}^+ 
  \circ \alpha_1(t),\rho_{z_2}^+\circ \alpha_2(t)} > \max\seq{\frac{1}{M^1_-},\frac{1}{M^2_-}}}, \\
  \kappa^- &= \inf\seq{t \in [0,\tau]\,:\,\min \seq{\rho_{z_1}^- 
  \circ \alpha_1(t),\rho_{z_2}^-\circ \alpha_2(t)} < \min\seq{-\frac{1}{M^1_+},-\frac{1}{M^2_+}}},
 \end{align*}
 with the convention that $\kappa^\pm = \infty$ if the set is empty. 
 Set $\iota(t) = t$. Then
   \begin{align*}
    \norm{[z_1]_{\tau,M^1_\pm}-[z_2]_{\tau,M^2_\pm}}_\infty
	&= \inf\seq{\Phi[\tilde{z}_1,\tilde{z}_2](\iota,\iota)\,:\tilde{z}_1 
	\overset{\tau,M^1_\pm}{\sim} z_1, \tilde{z}_2 \overset{\tau,M^2_\pm}{\sim} z_2} \\
	&\leq \inf\seq{\Phi[\tilde{z}_1,\tilde{z}_2](\iota,\iota)\,:\tilde{z}_1 
	\overset{\tau,M_\pm^1}{\sim_\circ} z_1, \tilde{z}_2 \overset{\tau,M_\pm^2}{\sim_\circ} z_2} \\
	&= \inf\seq{\Phi[z_1,z_2](\alpha_1,\alpha_2)\,:\alpha_1,\alpha_2 \in \mathscr{A}_\tau}. 
   \end{align*}
 \end{theorem}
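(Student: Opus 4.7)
The plan is to split the claim into three pieces—the outer equality, the middle inequality, and the final equality—and tackle each separately.

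For the first equality $\norm{[z_1]_{\tau,M^1_\pm}-[z_2]_{\tau,M^2_\pm}}_\infty = \inf\{\Phi[\tilde z_1,\tilde z_2](\iota,\iota):\tilde z_i \sim z_i\}$, I would begin with the direction $\geq$ by establishing the pointwise bound $\norm{\zeta_1-\zeta_2}_\infty \ge \Phi[\zeta_1,\zeta_2](\iota,\iota)$ for arbitrary continuous paths. The endpoint term $\abs{\zeta_1(\tau)-\zeta_2(\tau)}$ is trivial; for $t \in \mathcal{T}^+_{\zeta_i}\cap(\kappa^+,\tau]$, Lemma~\ref{lemma:BpBmProp}(i) yields $\zeta_i(t)=\rho^+_{\zeta_i}(t)$, and the condition $t>\kappa^+$ guarantees that the larger of $\rho^+_{\zeta_1}(t),\rho^+_{\zeta_2}(t)$ coincides with its true, untruncated running maximum. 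A short case split then exhibits a time $s^*\le t$ at which $\abs{\zeta_1(s^*)-\zeta_2(s^*)}\ge \abs{\rho^+_{\zeta_1}(t)-\rho^+_{\zeta_2}(t)}$; the symmetric argument handles $\rho^-$. Passing to the infimum over $\zeta_i \sim z_i$ yields $\geq$. For the reverse direction, given $\tilde z_i \sim z_i$, I would construct $\zeta_i \sim z_i$ with $\norm{\zeta_1-\zeta_2}_\infty \le \Phi[\tilde z_1,\tilde z_2](\iota,\iota)$ by first passing to the Orm representatives $\orm_{\tau,M^i_\pm}(\tilde z_i)$ (which are equivalent to $\tilde z_i$ by Theorem~\ref{thm:OrmEquiv}) and then applying a joint reparametrization that aligns the two breakpoint grids; since the $L^\infty$-distance between piecewise linear paths sharing a common breakpoint set is attained at those breakpoints, and $\Phi$ evaluates exactly at such critical instants, the bound follows.

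The middle inequality is immediate from Theorem~\ref{thm:GeneralEquiv}: $\tilde z_i \sim_\circ z_i$ implies $\tilde z_i \sim z_i$, so the second infimum is taken over a subset of the pairs admissible in the first, hence is no smaller.

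For the final equality, I would prove $\inf\{\Phi[z_1,z_2](\alpha_1,\alpha_2):\alpha_i \in \mathscr{A}_\tau\} \ge \inf\{\Phi[\tilde z_1,\tilde z_2](\iota,\iota):\tilde z_i \sim_\circ z_i\}$ by setting $\tilde z_i := z_i \circ \alpha_i$: the identities $\rho^\pm_{z_i \circ \alpha_i}=\rho^\pm_{z_i}\circ\alpha_i$ and $\mathcal{T}^\pm_{z_i\circ\alpha_i}=\alpha_i^{-1}(\mathcal{T}^\pm_{z_i})$ (the latter recorded just before the theorem statement) together realize the $\sim_\circ$ relation, with reparametrizations $\iota$ and $\alpha_i$, and also yield $\Phi[\tilde z_1,\tilde z_2](\iota,\iota)=\Phi[z_1,z_2](\alpha_1,\alpha_2)$. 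Conversely, given $\tilde z_i \sim_\circ z_i$ realized by $\gamma_i^1,\gamma_i^2\in\mathscr{A}_\tau$ with $\rho^\pm_{\tilde z_i}\circ\gamma_i^1=\rho^\pm_{z_i}\circ\gamma_i^2$, I would take $\alpha_i := \gamma_i^2 \circ (\gamma_i^1)^{-1}$ using the generalized inverse \eqref{eq:gen-inv}; monotonicity of $\rho^\pm$ implies that their values depend only on the composed map modulo plateaus, and this suffices to match the $\Phi$-values at the corresponding critical times.

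The main obstacle I anticipate is the technical bookkeeping of the critical-time sets $\mathcal{T}^\pm$ and thresholds $\kappa^\pm$ under reparametrization: the thresholds differ when $M_\pm^1 \neq M_\pm^2$, and when the $\gamma_i^j$ have plateaus one must verify through the generalized inverse that the identities relating $\rho^\pm_{\tilde z_i}$ and $\rho^\pm_{z_i}$ survive composition. The alignment-of-breakpoints step in the upper bound for the first equality is likewise delicate, since $\orm(\tilde z_1)$ and $\orm(\tilde z_2)$ naturally sit on distinct grids and must be jointly adjusted without leaving their respective equivalence classes.
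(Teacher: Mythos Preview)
Your overall decomposition into three pieces matches the paper, and your arguments for the $\geq$ direction of the first equality and for the middle inequality are essentially what the paper does (the paper packages your $\geq$ argument as Lemma~\ref{lemma:PhiInf}(i)). The gaps are in the two reverse directions.

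\textbf{First equality, direction $\leq$.} Your plan is to pass to $\orm_{\tau,M_\pm^i}(\tilde z_i)$ and then jointly reparametrize so that the two breakpoint grids coincide. This does not work as stated: at an aligned breakpoint $t$, it can happen that $t\in\mathcal T^+_{\tilde z_1}$ while $t\in\mathcal T^-_{\tilde z_2}$, so $\orm(\tilde z_1)(t)=\rho^+_{\tilde z_1}(t)$ but $\orm(\tilde z_2)(t)=\rho^-_{\tilde z_2}(t)$. Their difference can be much larger than $\Phi[\tilde z_1,\tilde z_2](\iota,\iota)$, which only compares $\rho^+$ with $\rho^+$ and $\rho^-$ with $\rho^-$. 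The paper avoids this collision in two steps: first it perturbs by maps $\alpha_i^\delta(t)=(1+\delta)t\wedge\tau$ to force $\mathcal T^+_{\tilde z_1\circ\alpha_1^\delta}\cap\mathcal T^-_{\tilde z_2\circ\alpha_2^\eta}=\emptyset$ (and symmetrically), controlling the error via continuity of $\Phi$ from above (Lemma~\ref{lemma:PhiProp}(ii)); second, it does \emph{not} use the Orm representatives but constructs new piecewise linear paths via Lemma~\ref{lemma:LinConstEquiv}, interpolating values $\Gamma_j^\pm$ on the common grid $\mathscr S^\pm=\mathcal T^\pm_{\zeta_1}\cup\mathcal T^\pm_{\zeta_2}$ so that both $\tilde z_1$ and $\tilde z_2$ take $\rho^+$-type values on $\mathscr S^+$ and $\rho^-$-type values on $\mathscr S^-$. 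Only with this synchronized construction does the breakpoint maximum equal $\Phi$ (this is Lemma~\ref{lemma:PhiInf}(ii)).

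\textbf{Final equality, direction $\leq$.} Your proposed $\alpha_i:=\gamma_i^2\circ(\gamma_i^1)^{-1}$ need not lie in $\mathscr A_\tau$: the generalized inverse $(\gamma_i^1)^{-1}$ has jumps wherever $\gamma_i^1$ has plateaus, so the composition is in general neither continuous nor surjective. Your closing remark that ``monotonicity of $\rho^\pm$ implies the values depend only on the composed map modulo plateaus'' is the right intuition, but it does not produce an element of $\mathscr A_\tau$. The paper resolves this by approximation: Lemma~\ref{lemma:UpperApprox} supplies $\zeta_i^\varepsilon\in\mathscr A_\tau$ with $\gamma_i^1\circ\zeta_i^\varepsilon\downarrow\iota$ uniformly, and Lemma~\ref{lemma:PhiProp}(iii) then gives $\Phi[\tilde z_1,\tilde z_2](\gamma_1^1\circ\zeta_1^\varepsilon,\gamma_2^1\circ\zeta_2^\eta)=\Phi[z_1,z_2](\gamma_1^2\circ\zeta_1^\varepsilon,\gamma_2^2\circ\zeta_2^\eta)\in\mathscr I_1$, which converges to $\Phi[\tilde z_1,\tilde z_2](\iota,\iota)$ by the one-sided continuity of Lemma~\ref{lemma:PhiProp}(ii). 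Thus the paper proves $\mathscr I_2\subset\mathrm{cl}(\mathscr I_1)$ rather than $\mathscr I_2\subset\mathscr I_1$, which is enough for the infima to agree.
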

  Let us give a geometrical interpretation of the minimization problem
  \begin{displaymath}
   \inf\seq{\Phi[z_1,z_2](\alpha_1,\alpha_2)\,:\alpha_1,\alpha_2 \in \mathscr{A}_\tau}.
  \end{displaymath}
  To this end, let $\alpha:[0,\tau] \rightarrow [0,\tau]^2$ be the 
  parameterized path $\alpha(t) := (\alpha_1(t),\alpha_2(t))$. 
  Given a subset $\mathscr{S} \subset [0,\tau]^2$, denote by 
  $\mathscr{T}_\alpha(\mathscr{S})$ the first time $\alpha$ hits $\mathscr{S}$:
  \begin{displaymath}
  	\mathscr{T}_\alpha(\mathscr{S}) = 
	\inf\seq{t \in [0,\tau]\,:\, \alpha(t) \in \mathscr{S}}.
  \end{displaymath}
  Set $\ell_{\seq{s_i = t}} := \seq{(s_1,s_2) \in [0,\tau]^2\,:\,s_i = t}$, for $i = 1,2$. 
  By Lemma~\ref{lemma:OrmCompEquiv} and the continuity of $\alpha$,
  \begin{align*}
   \mathcal{T}_{z_i \circ \alpha_i}^\pm = \seq{\alpha_i^{-1}(t)\,:\,t \in \mathcal{T}_{z_i}^\pm}
	&= \seq{\inf\seq{s \in [0,\tau]\,:\, \alpha_i(s) \geq t}\,:\,t \in \mathcal{T}_{z_i}^\pm} \\
	&= \seq{\mathscr{T}_\alpha(\ell_{\seq{s_i = t}})\,:\,t \in \mathcal{T}_{z_i}^\pm},
  \end{align*}
  for $i = 1,2$. Similarly, $\kappa^\pm = \mathscr{T}_\alpha(K^\pm)$, where
  \begin{align*}
   K^+ &=
   \seq{(s_1,s_2) \in [0,\tau]^2\,:\,\max 
   \seq{\rho_{z_1}^+(s_1),\rho_{z_2}^+(s_2)} > \max\seq{\frac{1}{M^1_-},\frac{1}{M^2_-}}}, \\
   K^- &= 
   \seq{(s_1,s_2) \in [0,\tau]^2\,:\,\min 
   \seq{\rho_{z_1}^-(s_1),\rho_{z_2}^-(s_2)} < \min\seq{-\frac{1}{M^1_+},-\frac{1}{M^2_+}}}.
  \end{align*}
  Consequently,
  \begin{multline*}
   \mathcal{T}_{z_1 \circ \alpha_1}^\pm 
   \cup \mathcal{T}_{z_2 \circ \alpha_2}^\pm \cap (\kappa^\pm,\tau]
    = \seq{\mathscr{T}_\alpha(\ell_{s_1 = t} \cap K^\pm) \,:\, t \in \mathcal{T}_{z_1}^\pm} \\
      \cup \seq{\mathscr{T}_\alpha(\ell_{s_2 = t} \cap K^\pm) \,:\, t \in \mathcal{T}_{z_2}^\pm}.
  \end{multline*}
   In other words, the value $\Phi[z_1,z_2](\alpha_1,\alpha_2)$ is 
   dependent only on where the path $\alpha$ hits 
   $\mathcal{L^+} \cup \mathcal{L^-}$; $\mathcal{L^\pm}$ are 
   the line segments
   \begin{displaymath}
    \mathcal{L}^\pm := \seq{\ell_{s_1 = t} \cap K^\pm \,:\, 
    t \in \mathcal{T}_{z_1}^\pm} \cup 
    \seq{\ell_{s_2 = t} \cap K^\pm \,:\, t \in \mathcal{T}_{z_2}^\pm},
   \end{displaymath}
   see Figure~\ref{fig:AlphaMin}. As a result, 
   $\Phi[z_1,z_2](\alpha_1,\alpha_2)$ is a function of the path $\alpha$, 
   independent of its parameterization. 

From the view of factoring the solution operator, Theorem~\ref{thm:EqDist} 
is supplying a description of the metric induced by the 
uniform norm on the quotient space, cf.~Figure~\ref{fig:Factorization}.

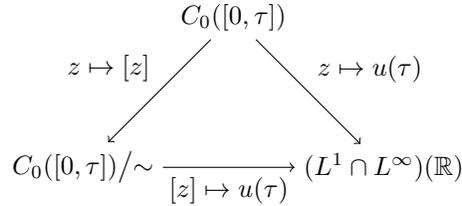
\begin{figure}[h]
\begin{tikzpicture}[node distance=2cm, auto]
  \node (C) {$C_0([0,\tau])$};
  \node (Q) [below of=C, left of=C] {$C_0([0,\tau])\big/{\sim}$};
  \node (S) [below of=C, right of=C] {$(L^1 \cap L^\infty)(\R)$};
  \draw[->] (C) to node {$z \mapsto u(\tau)$} (S);
  \draw[->] (C) to node [swap] {$z \mapsto [z]$} (Q);
  \draw[->] (Q) to node [swap] {$[z] \mapsto u(\tau)$} (S);
\end{tikzpicture}
\caption{Schematic drawing of the factorization of the solution map.}
\label{fig:Factorization}
\end{figure}

   Based on the above observations, we now give an outline of 
   how the minimization problem may be solved using dynamic programming.  
   Introduce a cost function $c:[0,\tau]^2 \rightarrow [0,\infty)$ by
   \begin{displaymath}
   	 c(s) =  \max\seq{\begin{split}             
		\abs{\rho_{z_1}^+(s_1)-\rho_{z_2}^+(s_2)} \1{\mathcal{L}^+}(s),\\
		\abs{\rho_{z_1}^-(s_1)-\rho_{z_2}^-(s_2)}\1{\mathcal{L}^-}(s), \\
		\abs{z_1(\tau)-z_2(\tau)}\1{\seq{s = (\tau,\tau)}}
		\end{split}},
   \end{displaymath}
  where $\1{\mathscr{S}}$ is the characteristic 
  function of $\mathscr{S}$. Hence, 
  \begin{displaymath}
   \Phi[z_1,z_2](\alpha_1,\alpha_2) = 
   \max\seq{c(\alpha(t))\,:\,t \in \cup_{i = 1,2}
   \seq{\mathscr{T}_\alpha(\ell_{s_i = t}) \,:\, t \in \mathcal{T}_{z_i}} \cup \seq{\tau}}.
  \end{displaymath}
  For any $s \in [0,\tau]^2$, let $\mathcal{A}_s$ be the set of 
  monotone paths connecting $s$ and $(\tau,\tau)$: 
\begin{displaymath}
   \mathcal{A}_s := \seq{\alpha \in C([0,1];[0,\tau]^2)\,:\, 
   \begin{split} 
   	&\alpha(0) = s, \alpha(1) = (\tau,\tau), \alpha = (\alpha_1,\alpha_2), 
   	 \mbox{ where }\\ 
    	&\alpha_1,\alpha_2 \mbox{ are nondecreasing and continuous.}
   \end{split}}.
\end{displaymath}
Define a value function $V:[0,\tau]^2 \rightarrow [0,\infty)$ by
   \begin{align*}
    V(s) = \inf_{\alpha \in \mathcal{A}_s}
    \seq{\max\seq{c(\alpha(t))\,:\,t \in \cup_{i = 1,2}
    \seq{\mathscr{T}_\alpha(\ell_{s_i = \tilde t}) 
    \,:\, \tilde t \in \mathcal{T}_{z_i}} \cup \seq{1}}},
   \end{align*}
   so that
   \begin{displaymath}
    V((0,0)) = \inf\seq{\Phi[z_1,z_2](\alpha_1,\alpha_2)\,: 
    \alpha_1,\alpha_2 \in \mathscr{A}_\tau}
   \end{displaymath}
   is the sought value. 
   Let us show how to compute $V$ on the grid 
   $$
   \mathcal{G} = \left(\cup_{j} \ell_{s_1 = \tau_j^1}\right) 
   \cup \left(\cup_k \ell_{s_2 = \tau_k^2}\right),
   $$
   where $\seq{\tau^i_n}_{n = 0}^{N_i} = \mathcal{T}_{z_i}$. 
   First note that for $s \in \ell_{s_1 = \tau}$, the set 
   of admissible paths $\mathcal{A}_s$ is simply any path 
   tracing out the straight line connecting $s$ and $(\tau,\tau)$. Hence, 
   \begin{displaymath}
    V((\tau,s_2)) = \max\seq{c(\tau,t)\,:\, t \in (\seq{s_2} 
    \cup \mathcal{T}_{z_2}) \cap [s_2,\tau]}.
   \end{displaymath}
   Similarly,
   \begin{displaymath}
    V((s_1,\tau)) = \max\seq{c(t,\tau)\,:\, 
    t \in (\seq{s_1} \cup \mathcal{T}_{z_1}) \cap [s_1,\tau]}.
   \end{displaymath} 
   To compute $V$ on the remaining part of $\mathcal{G}$, define the squares 
   $$
   Q_{j,k} = [\tau_j^1,\tau_{j-1}^1] \times [\tau_k^2,\tau_{k-1}^2]
   $$ and let
   \begin{align*}
    \partial^-Q_{j,k} &= \seq{\tau^1_j} \times [\tau^2_k,\tau^2_{k-1}] 
    \cup [\tau^1_j,\tau^1_{j-1}] \times \seq{\tau^2_k}, \\
    \partial^+Q_{j,k} &= \seq{\tau^1_{j-1}} 
    \times [\tau^2_k,\tau^2_{k-1}] \cup [\tau^1_j,\tau^1_{j-1}] \times \seq{\tau^2_{k-1}}.
   \end{align*}
   Suppose $V$ is known on $\partial^+Q_{j,k}$. Then, for $s \in \partial^-Q_{j,k}$,
   \begin{displaymath}
    V(s) = \max\seq{c(s), \min\seq{V(\tilde{s})\,:\,\tilde s \in \partial^+Q_{j,k} 
    \cap \seq{\bar s \in [0,\tau]^2 \,:\,\bar s \geq s}}}.
   \end{displaymath}
   As $\partial^+Q_{j,k} \subset \partial^-Q_{j-1,k} 
   \cup \partial^-Q_{j,k-1}$ for $j,k > 1$  we may compute $V$ on 
   the entire grid $\mathcal{G}$, starting in the upper right 
   square $Q_{1,1}$ and trace our way down to the 
   lower left square $Q_{N_1,N_2}$.
\begin{figure}[h]
\begin{tikzpicture}

 \node at (3,-0.5) {$\mathcal{T}_{z_1}^+ \cup \mathcal{T}_{z_1}^-$};
 \node[rotate = 90] at (-0.5,3){$\mathcal{T}_{z_2}^+ \cup \mathcal{T}_{z_2}^-$};
 
 \draw[dashed] (5.5,-0.1) -- (5.5,5.5);
 \node[below=6pt] at (5.5,0.0){$\tau$};
 \draw[dashed] (-0.1,5.5) -- (5.5,5.5);
 \node[left=6pt] at (0,5.5){$\tau$};
 \draw (4.1,-0.1) -- (4.1,5.5);
 \draw (-0.1,5.1) -- (5.5,5.1);
 \draw (-0.1,2.3) -- (5.5,2.3);
 \draw (1.5,-0.1) -- (1.5,5.5);
 \fill[color=white] (0,0) rectangle (3.2,1.3); 
 \fill[pattern=north east lines, pattern color=gray] (0,0) rectangle (1.2,2.5);
 \fill[pattern=north west lines, pattern color=gray] (0,0) rectangle (3.2,1.3);

 \draw (2.1,-0.1) -- (2.1,5.5);
 \draw[thick] (0,0) -- (2.1,1.0) -- (2.3,2.3) -- (4.1,4.9) -- (5.5,5.1) -- (5.5,5.5);

 \draw[thick, ->] (0,0) -- (6,0) node[right]{$s_1$};
 \draw[thick, ->] (0,0) -- (0,6) node[above]{$s_2$};
\end{tikzpicture}
\caption{The curve $\alpha = (\alpha_1,\alpha_2)$ along 
with the line segments in $\mathcal{L}^+$ and $\mathcal{L}^-$. 
The complement of $K^+$ and $K^-$ are hatched in grey.}
\label{fig:AlphaMin}
\end{figure}
 
\section{Proofs of main results}\label{seq:Proofs}
In this section we provide detailed proofs of 
Theorems \ref{thm:OrmEquiv}, \ref{thm:OSB}, \ref{thm:GeneralEquiv}, 
and \ref{thm:EqDist}.

\subsection{Local solutions by substitution}
For a given path $z \in C^1([0,T])$, let $v$ solve the conservation law
\begin{equation*}
 \partial_tv + \partial_xf(v) = 0,  \qquad v(z(0),x) = u_0(x)
\end{equation*}
on $[z(0),\infty)\times \R$, and set $u(t,x) := v(z(t),x)$. Then, formally, it follows 
that $u$ solves \eqref{eq:sscl}. Let us take a closer look at this 
substitution, by considering the viscous approximation. That is, let $v^\varepsilon$ 
be the classical solution to the parabolic problem
\begin{equation*}
 \partial_t\ve + \partial_x f(\ve) = 
 \varepsilon \partial_x^2 \ve, \qquad \ve(z(0),x) = u_0(x)
\end{equation*}
on $[z(0),\infty)\times \R$. Then $\ue(t,x) := \ve(z(t),x)$ satisfies, for any 
convex entropy, entropy-flux pair $(S,Q)$,
\begin{equation*}
 \partial_tS(\ue) + \partial_xQ(\ue)\dot{z} = 
 \varepsilon\partial_x^2S(\ue)\dot{z} 
 -\varepsilon S''(\ue)(\partial_x\ue)^2\dot{z}, \quad \ue(0,x) = u_0(x).
\end{equation*}
A priori, due to the factor $\dot{z}$, the limiting 
solution does not necessarily dissipate the entropy. However, if 
$\partial_xv(z(t),x) = \partial_xu(t,x)$ is bounded, then 
the dissipation vanishes as $\varepsilon\to 0$ 
and $u$ ought to be a solution. Also, if $\dot{z} \geq 0$, 
then $\ue$ ought to converge to the entropy solution to \eqref{eq:sscl}.  
Similarly, if $\dot{z} \le 0$, we let $\ve$ solve 
the parabolic problem with flux $-f$,
\begin{equation*}
 \partial_t\ve - \partial_x f(\ve) 
 = \varepsilon \partial_x^2 \ve, 
 \qquad \ve(-z(0),x) = u_0(x)
 \end{equation*}
on $(-\infty,-z(0))\times \R$, and take $\ue(t,x) = \ue(-z(t),x)$. 

These observations are formalized in the next two lemmas.
We consider first the case that $z$ is monotone.
\begin{lemma}\label{lemma:EntSub}
  Assume that $f$ is Lipschitz continuous and $u_0 \in
  (L^\infty \cap L^1 \cap BV)(\R)$. Suppose $z$ is Lipschitz
  continuous and nondecreasing on $[0,\tau]$ and $\tilde{z}$ is
  Lipschitz continuous on $[z(0),z(\tau)]$. Let $v$ be the entropy solution to
  \begin{equation*}
    \begin{cases}
      \partial_tv + \tilde{z}'\partial_xf(v) = 0, 	&(t,x) \in
      (z(0),z(\tau)) \times \R,\\ 
      v(z(0),x) = u_0(x) &x \in \R.
    \end{cases}
  \end{equation*}
  Set $u(t,x) := v(z(t),x)$. Then $u$ is an entropy solution to
  \begin{equation*}
    \begin{cases}
      \partial_tu + (\tilde{z} \circ z)'\partial_x f(u) = 0, 	&(t,x) \in (0,\tau) \times \R, \\
      u(0,x) = u_0(x), &x \in \R.
    \end{cases}
  \end{equation*}
\end{lemma}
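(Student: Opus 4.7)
The proof should formalize the viscous-approximation heuristic already sketched in the paragraph preceding the lemma. I would first smooth the initial data, $u_0^\varepsilon \in C_c^\infty(\R)$, and consider the classical (smooth) solution $\ve$ of the parabolic problem
\begin{equation*}
	\partial_s \ve + \tilde{z}'(s)\partial_x f(\ve) = \varepsilon \partial_x^2 \ve, \qquad \ve(z(0),x) = u_0^\varepsilon(x),
\end{equation*}
on $[z(0),z(\tau)]\times \R$. Since $\tilde z'\in L^\infty$, $f$ is Lipschitz, and $u_0\in BV$, standard theory gives uniform $L^\infty\cap L^1\cap BV$ bounds and convergence $\ve\to v$ in $C([z(0),z(\tau)];L^1_{\mathrm{loc}}(\R))$ as $\varepsilon\to 0$.

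Set $\ue(t,x):=\ve(z(t),x)$. The key computational step is the chain rule: since $z$ is Lipschitz and nondecreasing, $z$ is absolutely continuous with $z'\ge 0$ a.e., and for any Lipschitz $\tilde z$ one has $(\tilde z\circ z)'(t)=\tilde z'(z(t))\,z'(t)$ for a.e.~$t\in[0,\tau]$. Applying this to $s\mapsto \ve(s,x)$ (which is even $C^1$) yields
\begin{equation*}
	\partial_t \ue + (\tilde z\circ z)'(t)\,\partial_x f(\ue) = \varepsilon\, z'(t)\,\partial_x^2 \ue.
\end{equation*}
Multiplying by $S'(\ue)$ for a convex entropy/entropy-flux pair $(S,Q)$ with $Q'=S'f'$ gives
\begin{equation*}
	\partial_t S(\ue) + (\tilde z\circ z)'(t)\,\partial_x Q(\ue) = \varepsilon\, z'(t)\,\partial_x^2 S(\ue) - \varepsilon\, z'(t)\, S''(\ue)(\partial_x \ue)^2.
\end{equation*}

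At this point the monotonicity hypothesis on $z$ does the essential work: because $z'\ge 0$ and $S''\ge 0$, the dissipation term is nonpositive, so the right-hand side is bounded above by $\varepsilon z'(t)\partial_x^2 S(\ue)$. Testing against a nonnegative $\test\in C_c^\infty((0,\tau)\times\R)$ and integrating by parts yields the approximate entropy inequality
\begin{equation*}
	\int_0^\tau\!\!\int_\R \bigl[S(\ue)\partial_t\test + (\tilde z\circ z)'(t)Q(\ue)\partial_x\test\bigr]\,dx\,dt \ge -\varepsilon\int_0^\tau\!\!\int_\R z'(t)S(\ue)\,\partial_x^2\test\,dx\,dt.
\end{equation*}

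Finally, I would pass to the limit $\varepsilon\to 0$. The right-hand side vanishes since $\|z'\|_\infty<\infty$ and $\|S(\ue)\|_\infty$ is uniformly bounded. For the left-hand side, the $L^1_{\mathrm{loc}}$ convergence $\ve\to v$ and dominated convergence give $\ue\to u$ in $L^1_{\mathrm{loc}}((0,\tau)\times\R)$, and continuity of $S,Q$ (together with uniform $L^\infty$ bounds) lets one pass to the limit inside the integrals. The initial trace is immediate: $u(0,x)=v(z(0),x)=u_0(x)$, with the initial value attained in $L^1$ because the analogous property holds for $v$ at $s=z(0)$ and $z$ is continuous at $0$. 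I expect the main obstacle to be the careful justification of the chain rule $(\tilde z\circ z)'=\tilde z'(z)\,z'$ a.e.\ for Lipschitz $\tilde z$ composed with nondecreasing Lipschitz $z$---this is classical but does use monotonicity in an essential way (it would fail for general Lipschitz $z$)---and verifying that nothing delicate happens at points where $z'=0$, where the equation for $\ue$ formally degenerates but, fortunately, so does the coefficient in front of the unwanted dissipation.
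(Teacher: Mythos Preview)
Your proposal is correct, but it takes a genuinely different route from the paper's proof.

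The paper argues directly at the level of the entropy inequalities by a change of variables. Assuming first that $z$ is \emph{strictly} increasing (so $z^{-1}$ exists), it takes a nonnegative test function $\varphi\in C_c^\infty([0,\tau)\times\R)$, sets $\phi(\zeta,x):=\varphi(z^{-1}(\zeta),x)$, and substitutes $\zeta=z(t)$ in the integral
\[
\int_0^\tau\!\!\int_\R \bigl[S(u)\partial_t\varphi + (\tilde z\circ z)'Q(u)\partial_x\varphi\bigr]\,dx\,dt,
\]
which transforms it into exactly the entropy inequality for $v$ on $[z(0),z(\tau)]$ tested against $\phi$. Since $v$ is assumed to be an entropy solution, the inequality follows. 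The merely nondecreasing case is then obtained by approximating $z$ by strictly increasing paths $z_\varepsilon=(1-\varepsilon)z+\varepsilon(\text{linear})$ and passing to the limit; the constant case is trivial.

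Your approach instead descends to the viscous level, derives a pointwise entropy identity for $u^\varepsilon$, and uses $z'\ge 0$ to control the sign of the transported dissipation term before letting $\varepsilon\downarrow 0$. This is longer (you effectively redo the vanishing-viscosity limit rather than simply quoting that $v$ is an entropy solution), but it has the pedagogical advantage of making visible \emph{where} monotonicity enters: the factor $z'(t)$ in front of $-\varepsilon S''(u^\varepsilon)(\partial_x u^\varepsilon)^2$. One small caveat: with $\tilde z'$ merely in $L^\infty$, $v^\varepsilon$ is smooth in $x$ but only Lipschitz in $s$, not $C^1$; the chain rule you invoke is still valid a.e.\ via the integral identity $v^\varepsilon(z(t_2),x)-v^\varepsilon(z(t_1),x)=\int_{t_1}^{t_2}(\partial_s v^\varepsilon)(z(t),x)\,z'(t)\,dt$ for monotone Lipschitz $z$, so the argument goes through with this minor adjustment in wording.
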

\begin{proof}
  Assume first that $z$ is strictly increasing, i.e., $z' > 0$ on $(0,\tau)$, 
  and set $z(0) = a$, $z(\tau) = b$. We need to
  show that $u(t,x) = v(z(t),x)$ satisfies
  \begin{equation}\label{eq:EntIn}
    \int_0^\tau \int_\R S(u)\partial_t \test  + (\tilde{z} \circ
    z)'Q(u)\partial_x \test \,dxdt  
    + \int_\R S(u_0(x))\test(0,x)\,dx \geq 0,
  \end{equation}
  for all convex entropy, entropy-flux pairs $(S,Q)$ and for all
  non-negative test function $\test \in C^\infty_c([0,\tau) \times \R)$. 
  Let $\phi(\zeta,x) = \test(z^{-1}(\zeta),x)$.  Upon a change of
  variables it follows that
  \begin{align*}
    \int_0^\tau \int_\R S(u)\partial_t \test \,dxdt
    &= \int_0^\tau \int_\R S(v(z(t),x))\partial_z\phi(z(t),x)\dot{z}(t)\,dxdt \\
    &= \int_{a}^{b} \int_\R S(v(z,x))\partial_z\phi(z,x)\,dxdz.
  \end{align*}
  Similarly,
  \begin{equation*}
    \int_0^\tau \int_\R (\tilde{z} \circ z)'(t)Q(u)\partial_x \test \,dxdt 
    = \int_{a}^{b}\int_\R \tilde{z}'(z)Q(v(z,x))\partial_x \phi(z,x) \,dxdz,
  \end{equation*}
  and
  \begin{equation*}
    \int_\R S(u_0(x))\test(0,x)\,dx = \int_\R S(v(a,x))\phi(a,x)\,dx.
  \end{equation*}
  Hence, \eqref{eq:EntIn} follows due to the fact that $v$ is an
  entropy solution. 
  
  Next, suppose $z$ is merely nondecreasing, i.e., $z'\ge 0$ on $(0,\tau)$, and 
  $z(\tau) > z(0)$. Introduce the approximation 
  \begin{equation*}
    z_\varepsilon(t) = (1-\varepsilon)z(t) + \varepsilon\left[\frac{z(\tau)-z(0)}{\tau}t + z(0)\right], 
    \qquad 0 < \varepsilon < 1,
  \end{equation*} 
  and note that $z_\varepsilon' > 0$. 
  Take $u^\varepsilon(t,x) = v(z_\varepsilon(t),x)$ and send $\varepsilon
  \downarrow 0$ in \eqref{eq:EntIn}. 
  
  If $z' \equiv 0$ on $(0,\tau)$, then
  $\tilde{z}(t) = z(0)$ and $u(t,x) = v(z(t),x) = u_0(x)$, so
  \eqref{eq:EntIn} is satisfied.
\end{proof}
In the following discussion it will be convenient for us to talk about 
\emph{backward entropy solutions}. 
For us the natural backward solution is the (forward) 
entropy solution to the problem with flux $-f$. 
That is, the entropy solution to 
\begin{equation}\label{eq:TimeHomNeg}
 \partial_tv + \partial_xf(v) = 0, \qquad v(0,x) = v_0(x)
\end{equation}
on $(-\infty,0]\times \R$ is obtained by solving the problem 
\begin{equation*}
 \partial_tw - \partial_xf(w) = 0, \qquad w(0,x) = v_0(x),
\end{equation*}
on $[0,\infty)\times \R$ and setting $v(t,x) = w(-t,x)$. 
This yields the following backward Kru\v{z}kov entropy condition 
for \eqref{eq:TimeHomNeg} on the interval $(-\infty,0]$. 
For any convex entropy, entrop-flux pair $(S,Q)$, and for any 
non-negative $\test \in C^\infty_c((-\infty,0] \times \R)$,
\begin{equation*}
   \int^0_{-\infty}\int_\R S(v)\partial_t\test + Q(v)\partial_x\test\,dxdt 
   - \int_\R S(v_0(x))\test(0,x)\,dx \le 0.
\end{equation*}
By the one-to-one correspondence $v \mapsto w$ it is clear 
that the associated notion of \emph{backward entropy solution} is well posed for 
any $u_0 \in (L^\infty \cap L^1)(\R)$. 
The \emph{backward/forward} entropy solution 
to \eqref{eq:TimeHomNeg} on $(-\infty,\infty) \times \R$ is 
obtained by considering the forward solution for $t \geq 0$ and 
the backward solution for $t < 0$. The fact that the initial 
condition was specified at time $t_0 = 0$ 
was somehow arbitrary, and the extension to general $t_0 \in \R$ 
may be obtained by the substitution $t \mapsto t-t_0$.  

\begin{lemma}\label{lemma:VanishingDissip}
 Suppose $u_0 \in (L^\infty \cap L^1 \cap BV)(\R)$ and $f$ is 
 Lipschitz continuous. Let $z$ be a Lipschitz continuous path on $[0,\tau]$. Write 
 \begin{equation*}
  z_\mathrm{min} = \min_{0 \le t \le \tau}\seq{z(t)} \quad \text{and} \quad 
  z_\mathrm{max} = \max_{0 \le t \le \tau}\seq{z(t)}.
 \end{equation*}
 Let $u$ be the entropy solution to $\eqref{eq:sscl}$ on $[0,\tau]$ with initial 
 condition $u_0$, and $v$ be the backward/forward entropy solution to 
 \begin{equation}\label{eq:TimeHom1}
  \partial_tv + \partial_xf(v) = 0, \qquad v(z(0),x) = u_0(x),
 \end{equation}
 on $[z_\mathrm{min},z_\mathrm{max}]$. Suppose $v$ is uniformly Lipschitz
 continuous in $x$ for any $t\in [t_1,t_2] \subset
 (z_\mathrm{min},z_\mathrm{max})$. Then 
 $u(t,x) =v(z(t),x)$. Furthermore $u$ is nondissipative, i.e.,
for any convex entropy, entropy-flux pair $(S,Q)$ and for any 
$\test \in C^\infty_c([0,\tau) \times \R)$,
 \begin{equation}\label{eq:NondissipEnt}
    \int_0^\tau \int_\R S(u)\partial_t \test  + \dot{z}Q(u)\partial_x \test \,dxdt 
    + \int_\R S(u_0(x))\test(0,x)\,dx = 0.
 \end{equation}
\end{lemma}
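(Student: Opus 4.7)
I would prove both assertions simultaneously by showing that the composition $\tilde{u}(t,x) := v(z(t),x)$ is an entropy solution of \eqref{eq:sscl} on $[0,\tau]$ satisfying the entropy equation with equality. Uniqueness of entropy solutions then forces $u=\tilde{u}$, which simultaneously gives the identification $u(t,x)=v(z(t),x)$ and the non-dissipation identity \eqref{eq:NondissipEnt}.

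The main local computation is an interior chain rule. The assumption that $v$ is uniformly Lipschitz in $x$ on any compact $[t_1,t_2]\subset(z_{\min},z_{\max})$, combined with the entropy relation $\partial_tv+\partial_xf(v)=0$, promotes $v$ to a classical a.e.-differentiable solution on the strip $(z_{\min},z_{\max})\times\R$; applying the chain rule to any convex entropy pair $(S,Q)$ yields $\partial_tS(v)+\partial_xQ(v)=0$ a.e. Since $z$ is Lipschitz, $\dot{z}$ exists a.e.\ by Rademacher's theorem, and differentiating $\tilde{u}$ gives
\[
  \partial_tS(\tilde{u})+\dot{z}\,\partial_xQ(\tilde{u})
  =\dot{z}\bigl[\partial_tS(v)+\partial_xQ(v)\bigr]\bigl(z(t),x\bigr)=0
\]
at a.e.\ $(t,x)$ with $z(t)\in(z_{\min},z_{\max})$. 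This already yields equality in \eqref{eq:NondissipEnt} for test functions supported away from the set of times $T^*:=\{t\in[0,\tau]:z(t)\in\{z_{\min},z_{\max}\}\}$.

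To cover arbitrary test functions and deduce $u=\tilde{u}$, I would approximate $z$ uniformly by piecewise linear paths $z_n$. Each $z_n$ is piecewise monotone; Lemma~\ref{lemma:EntSub} with $\tilde{z}=\mathrm{id}$ handles increasing pieces directly, while for decreasing pieces the Lipschitz regularity of $v$ makes the evolution reversible (forward and backward entropy solutions coincide on the smooth region), so an analogous substitution applies via the backward-entropy formulation introduced just before the statement. Gluing by uniqueness, $\tilde{u}^n:=v\circ z_n$ is the entropy solution of the rough equation with path $z_n$, and it is non-dissipative by the interior computation applied on each piece. Stability \eqref{eq:stabilityPes} together with the continuity $v\in C([z_{\min},z_{\max}];L^1(\R))$ allows one to pass to the limit and conclude $u=\tilde{u}$ while preserving the equality in \eqref{eq:NondissipEnt}.

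The main obstacle is the boundary set $T^*$: the assumed Lipschitz regularity of $v$ in $x$ holds only on compact subsets of the open interval $(z_{\min},z_{\max})$, not up to its endpoints, so a direct pointwise chain rule is inconclusive at times in $T^*$. The approximation strategy above circumvents this, relying on the uniform stability of rough-path entropy solutions under uniform path convergence, together with the fact that on a level set of a Lipschitz function the derivative vanishes a.e.\ (Rademacher plus the Lebesgue density theorem), so the flux contribution of these exceptional times is negligible in the limit.
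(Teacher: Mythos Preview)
Your plan is sound and follows the same overall strategy as the paper: show that $\tilde u:=v\circ z$ satisfies the entropy identity with equality, and then invoke uniqueness. The interior chain-rule step---Lipschitz regularity of $v$ forces $\partial_t S(v)+\partial_x Q(v)=0$ a.e., hence $\partial_t S(\tilde u)+\dot z\,\partial_x Q(\tilde u)=0$ a.e.\ where $z(t)\in(z_{\min},z_{\max})$---is exactly what the paper uses (it writes this as $m_v=0$ a.e.).

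The difference lies in how the boundary set $T^*=z^{-1}(\{z_{\min},z_{\max}\})$ is handled. The paper's trick is simpler than yours: it approximates $z$ by $C^1$ paths $z_\varepsilon$ with $z_{\min}+\varepsilon\le z_\varepsilon\le z_{\max}-\varepsilon$, so $z_\varepsilon$ never touches the endpoints and the chain rule is valid everywhere; one then passes to the limit directly in the weak formulation. Your route---piecewise linear approximation, Lemma~\ref{lemma:EntSub} on monotone pieces, reversibility on smooth regions, and finally the ``$\dot z=0$ a.e.\ on level sets'' argument---can be made to work, but it carries more overhead. In particular, your piecewise linear $z_n$ may still touch $z_{\min}$ or $z_{\max}$, and at those times the reversibility argument (forward and backward entropy solutions coincide) is not immediate, since the hypothesis only gives Lipschitz regularity of $v$ on compact subsets of the \emph{open} interval; you would need an additional $L^1$-contraction argument or, more simply, to choose $z_n$ avoiding the endpoints as the paper does. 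Once you make that choice, the detour through Lemma~\ref{lemma:EntSub} and the level-set argument becomes unnecessary: the interior chain rule already gives the full identity for $u^\varepsilon=v\circ z_\varepsilon$, and you pass to the limit.
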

\begin{proof}
  By the weak formulation of \eqref{eq:TimeHom1}, using the Lipschitz
  continuity in $x$, for any $[t_1,t_2]
  \subset (z_\mathrm{min},z_\mathrm{max})$,
  \begin{equation*}
    \abs{v(t_2,x)-v(t_1,x)} \le \sup_{t_1 \le t \le t_2}
    \seq{\norm{\partial_xf(v)}_{L^\infty(\R)}}\abs{t_2-t_1}.
  \end{equation*}
  Consequently $v$ is locally Lipschitz continuous in time. Let us
  consider an approximation $\seq{z_\varepsilon}_{\varepsilon > 0}
  \subset C^1([0,\tau])$ of $z$ satisfying
  \begin{equation*}
    z_\varepsilon \rightarrow z \mbox{ uniformly 
    on $[0,\tau]$}, \qquad z_\mathrm{min} +\varepsilon 
    \le z_\varepsilon \le z_\mathrm{max} -\varepsilon.
  \end{equation*}
  Let $\ue(t,x) = v(z_\varepsilon(t),x)$. As $t \mapsto v(t)$ is
  $L^1$-Lipschitz continuous, $u_\varepsilon \rightarrow u$ in
  $C([0,\tau];L^1(\R))$. Using integration by parts and the chain rule
  \cite[Theorem~3.99]{AmbrosioFuscoPallara2000} we obtain
  \begin{align*}
    \int_0^\tau & \int_\R S(u)\partial_t \test  +
    \dot{z}Q(u)\partial_x \test \,dxdt + \int_\R
    S(u_0(x))\test(0,x)\,dx \\ 
    &= \lim_{\varepsilon \downarrow 0} \int_0^\tau \int_\R
    S(\ue)\partial_t \test  + \dot{z_\varepsilon}Q(\ue)\partial_x
    \test \,dxdt + \int_\R  S(v(z_\varepsilon(0),x))\test(0,x)\,dx \\
    &= -\lim_{\varepsilon \downarrow 0} \int_0^\tau \int_\R
    \underbrace{\left[\partial_z S(v(z_\varepsilon(t),x))
        + \partial_xQ(v(z_\varepsilon(t),x))\right]}_{m_v(z_\varepsilon(t),x)}
    \dot{z_\varepsilon}(t)\test(t,x) 
    \,dxdt.
  \end{align*}
  By Lipschitz continuity, $m_v \in
  L^\infty([z_\mathrm{min}+\varepsilon,z_\mathrm{max} - \varepsilon]
  \times \R)$ for $\varepsilon > 0$. Moreover, by the
  chain-rule, $m_v(z,x) = 0$ for almost all $(z,x) \in
  [z_\mathrm{min},z_\mathrm{max}] \times \R$. Hence, we conclude that
  \eqref{eq:NondissipEnt} holds.
\end{proof}

\subsection{Spatial regularity estimates for piecewise linear paths.}
Let $u$ be the entropy solution to \eqref{eq:sscl} with path 
$z$. In view of Lemma~\ref{lemma:VanishingDissip}, it is necessary to keep track 
of the best possible bounds $t \mapsto (-M_-(t),M_+(t))$ satisfying 
\begin{equation*}
	-M_-(t) \le \frac{f'(u(t,y))-f'(u(t,x))}{y-x} \le M_+(t),
	\quad x,y\in \R, \, x<y.
\end{equation*}

We begin with the following Ole{\u\i}nik-type estimate:

\begin{lemma}\label{lemma:OneSidedBounds}
 Suppose \eqref{eq:Af} holds and $u_0 \in L^\infty(\R) \cap BV(\R)$ 
 satisfies \eqref{eq:LowUpOnInitial}. Fix $\lambda \in \R$ 
 and let $u$ be the entropy solution to
 \begin{equation*}
  \partial_t u + \lambda \partial_xf(u) = 0, \qquad u(0,x) = u_0(x),
 \end{equation*}
 for some constant $\lambda$.
 Let $\Theta(x) = x$ for $x \geq 0$ and $\Theta(x) = \infty$ for $x < 0$. Then
 \begin{equation}\label{eq:UpLowBound}
  -\Theta\left(\frac{1}{(M_-)^{-1}-t\lambda}\right) \le \frac{f'(u(t,y\pm))-f'(u(t,x\pm))}{y-x} 
  \le \Theta\left(\frac{1}{(M_+)^{-1} + t\lambda}\right)
 \end{equation}
 for all $-\infty < x < y < \infty$. Whenever $M_\pm$ takes the values $0$ or $\infty$, we 
 use the convention $(\infty)^{-1} = 0$ and $(0)^{-1} = \infty$.
\end{lemma}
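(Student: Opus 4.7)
\emph{Reduction.} First I would dispense with the trivial case $\lambda = 0$, where $u(t,\cdot)\equiv u_0$ and \eqref{eq:UpLowBound} reduces at once to \eqref{eq:LowUpOnInitial}. For $\lambda<0$ I would reduce to $\lambda>0$ by the spatial reflection $\tilde u(t,x)\coloneq u(t,-x)$: a direct calculation shows that $\tilde u$ is the entropy solution associated with the coefficient $-\lambda>0$, and that $\tilde u_0(x)=u_0(-x)$ satisfies \eqref{eq:LowUpOnInitial} with $M_+$ and $M_-$ swapped. Applying the $\lambda>0$ case to $\tilde u$ and translating back yields the asymmetric bound \eqref{eq:UpLowBound} for $u$. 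A linear time change $s=\lambda t$ then reduces matters to $\lambda=1$.

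\emph{Backward characteristics.} With $\lambda=1$ and $f$ strictly convex, I would invoke the standard theory of generalized backward characteristics \cite{Dafermos:2010fk}: for each $(t,x)$ the extremal backward characteristics $\xi^\mp_{t,x}$ are straight line segments on $[0,t]$ of slope $f'(u(t,x\mp))$, terminating at $x_0^\mp\coloneq\xi^\mp_{t,x}(0)=x-tf'(u(t,x\mp))$ with $u_0(x_0^\mp)=u(t,x\mp)$. Since $u(t,x-)\ge u(t,x+)$ and $f'$ is increasing, $x_0^-\le x_0^+$; the non-crossing property then extends this, for $x<y$, to the chain $x_0^-\le x_0^+\le y_0^-\le y_0^+$.

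\emph{Two-sided bound.} Set $D\coloneq f'(u(t,y-))-f'(u(t,x+))$ and $L\coloneq y-x>0$. The identity $y_0^--x_0^+=L-tD$ combined with $x_0^+\le y_0^-$ gives $L-tD\ge 0$. If $y_0^->x_0^+$, the upper part of \eqref{eq:LowUpOnInitial} applied at these two initial points yields $D\le M_+(y_0^--x_0^+)=M_+(L-tD)$, which rearranges to $D\le L/(M_+^{-1}+t)$; the degenerate case $y_0^-=x_0^+$ forces $D=0$ and the bound holds trivially. Monotonicity of $f'$ together with $u(t,x-)\ge u(t,x+)$ and $u(t,y-)\ge u(t,y+)$ makes this the worst of the four sign combinations, so the upper bound extends to each of them. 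For the lower bound I would use the opposite pair: with $D'\coloneq f'(u(t,y+))-f'(u(t,x-))$, the chain yields $y_0^+-x_0^-=L-tD'\ge 0$, and the lower part of \eqref{eq:LowUpOnInitial} gives $D'\ge -M_-(L-tD')$, i.e.\ $D'(1-M_-t)\ge -M_-L$. For $t<(M_-)^{-1}$ the factor $1-M_-t$ is strictly positive and I recover $D'\ge -L/((M_-)^{-1}-t)$; for $t\ge (M_-)^{-1}$ the $\Theta$-convention makes the target lower bound $-\infty$ and nothing needs to be shown.

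\emph{Main obstacle.} The hard part I foresee is not the core computation above but a careful bookkeeping of the conventions $(0)^{-1}=\infty$ and $(\infty)^{-1}=0$ through the boundary cases $M_\pm\in\{0,\infty\}$ (for instance $M_+=\infty$ must collapse to the classical Ole{\u\i}nik estimate $D\le L/t$, while $M_+=0$ must force $D\le 0$), together with verifying that the non-crossing property and the one-sided-limit identities $u_0(x_0^\mp)=u(t,x\mp)$ from the theory of generalized characteristics are genuinely available under the mere $L^\infty\cap BV$ regularity of $u_0$ assumed here. Once those points are settled, the reduction step of the first paragraph delivers \eqref{eq:UpLowBound} in the full generality stated.
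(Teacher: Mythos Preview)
Your approach is essentially the paper's: both arguments rest on the straight-line extremal backward characteristics and their non-crossing property, together with the initial bound \eqref{eq:LowUpOnInitial} applied at the feet. Your reflection-plus-time-rescaling reduction to $\lambda=1$ is a mild streamlining of the paper's direct case split $\lambda>0$ versus $\lambda<0$, and your ``worst of the four sign combinations'' observation lets you run a single computation where the paper tracks the $\pm$ choices in parallel; neither difference is structural.

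The one point that needs correction rather than mere verification is the identity $u_0(x_0^\mp)=u(t,x\mp)$. Under $L^\infty\cap BV$ initial data this equality can fail when the foot of the characteristic lands on a jump of $u_0$. What \emph{is} available (Dafermos, Theorem~11.1.3) is the pair of inequalities
\[
u_0(\xi_\pm(0)-)\le u(t,x\pm)\le u_0(\xi_\pm(0)+),
\]
and this is precisely what the paper invokes. Your algebra survives unchanged: with $D=f'(u(t,y-))-f'(u(t,x+))$ one gets $D\le f'(u_0(y_0^-+))-f'(u_0(x_0^+-))$, and then \eqref{eq:LowUpOnInitial} applied at nearby points and passed to one-sided limits yields $D\le M_+(y_0^--x_0^+)=M_+(L-tD)$ as you wrote. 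So the obstacle you anticipated is real, but its resolution is exactly this replacement of an equality by the Dafermos sandwich; once that substitution is made your proof is complete and matches the paper's.
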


\begin{proof}
  Let $\xi_\pm$ be the maximal/minimal backward characteristic
  emanating from $(t,x)$ and $\zeta_\pm$ be the maximal/minimal
  backward characteristic emanating from $(t,y)$ on
  $[0,t]$,~cf.~\cite[\S~10.2]{Dafermos:2010fk}. By
  \cite[Theorem~10.3.2]{Dafermos:2010fk}, both $\xi_\pm$ and
  $\zeta_\pm$ are shock free on $[\tau,t]$. In view of
  \cite[Theorem~11.1.1]{Dafermos:2010fk},
  \begin{equation*}
    x = \xi_\pm(0) + t\lambda f'(u(t,x\pm)), \quad \mbox{ and } y =
    \zeta_\pm(0) + t\lambda f'(u(t,y\pm)). 
  \end{equation*}
  where $\xi_\pm(0) \le \zeta_\pm(0)$. Note, these equalities are true
  when we chose either $+$ or $-$. That is, when considering $x+$, we
  apply $\xi_+$ and so forth. The case $\lambda = 0$ is trivial. We
  consider the cases $\lambda > 0$ and $\lambda < 0$ separately.

  \emph{Assume $\lambda > 0$}. Consider the upper bound in
  \eqref{eq:UpLowBound}. Assume $f'(u(t,y\pm))-f'(u(t,x\pm)) > 0$. It
  follows that
  \begin{equation}\label{eq:InvRegChar}
    \frac{y-x}{f'(u(t,y\pm))-f'(u(t,x\pm))} =
    \frac{\zeta_\pm(0)-\xi_\pm(0)}{f'(u(t,y\pm))-f'(u(t,x\pm))} +
    t\lambda. 
  \end{equation}
  By \cite[Theorem~11.1.3]{Dafermos:2010fk},
  \begin{equation*}
    \begin{cases}
      u_0(\zeta_\pm(0)-) \le u(t,y\pm) \le u_0(\zeta_\pm(0)+), \\
      u_0(\xi_\pm(0)-) \le u(t,x\pm) \le u_0(\xi_\pm(0)+).
    \end{cases}
  \end{equation*}
  Hence, since $f'$ is increasing,
  \begin{equation*}
    0 < f'(u(t,y\pm))-f'(u(t,x\pm)) \le f'(u_0(\zeta_\pm(0)+))-f'(u_0(\xi_\pm(0)-)).
  \end{equation*}
  If $M_+ = 0$ this cannot be true, and so
  $f'(u(t,y\pm))-f'(u(t,x\pm)) \le 0$, which proves the upper bound
  for $M_+ = 0$. Assume $M_+ > 0$. Then, by the above,
  \begin{align*}
    \frac{y-x}{f'(u(t,y\pm))-f'(u(t,x\pm)}
    &\geq \frac{\zeta_\pm(0)-\xi_\pm(0)}{f'(u_0(\zeta_\pm(0)+)))-f'(u_0(\xi_\pm(0)-))} 
    + t\lambda \\
    &\geq \frac{1}{M_+} + t\lambda.
  \end{align*}
  Consider the lower bound in \eqref{eq:UpLowBound}. Assume
  $f'(u(t,y\pm))-f'(u(t,x\pm)) < 0$. Arguing as above,
  \begin{equation}\label{eq:CharInitIneqNeg}
    0 > f'(u(t,y\pm))-f'(u(t,x\pm)) \geq f'(u_0(\zeta_\pm(0)-))-f'(u_0(\xi_\pm(0)+)).
  \end{equation}
  If $M_- = 0$, this yields a contradiction, and so
  $f'(u(t,y\pm))-f'(u(t,x\pm)) \geq 0$, proving the lower bound for
  $M_- = 0$. Assume $M_- > 0$. Inserting \eqref{eq:CharInitIneqNeg}
  into \eqref{eq:InvRegChar} yields
  \begin{align*}
    \frac{y-x}{f'(u(t,y\pm))-f'(u(t,x\pm)}
    &\le \frac{\zeta_\pm(0)-\xi_\pm(0)}{f'(u_0(\zeta_\pm(0)+)))-f'(u_0(\xi_\pm(0)-))} 
    + t\lambda \\
    &\le -\frac{1}{M_-} + t\lambda.
  \end{align*}
  Assuming $t\lambda < (M_-)^{-1}$ yields the lower bound in
  \eqref{eq:UpLowBound}.

  \emph{Assume $\lambda < 0$}. By \cite[Theorem~11.1.3]{Dafermos:2010fk}
  (upon reversing the inequality as $\lambda f$ is concave) we have
  \begin{equation}\label{eq:CharValueConcave}
    \begin{cases}
      u_0(\zeta_\pm(0)+) \le u(t,y\pm) \le u_0(\zeta_\pm(0)-), \\
      u_0(\xi_\pm(0+) \le u(t,x\pm) \le u_0(\xi_\pm(0)-).
    \end{cases}
  \end{equation}
  Consider the upper bound in \eqref{eq:UpLowBound}. Assume
  $f'(u(t,y\pm))-f'(u(t,x\pm)) > 0$. Then
  \begin{equation*}
    0 < f'(u(t,y\pm))-f'(u(t,x\pm)) \le f'(u_0(\zeta_\pm(0)-))-f'(u_0(\xi_\pm(0)+)).
  \end{equation*}
  If $M_+ = 0$ we obtain a contradiction. Assume $M_+ > 0$. Inserting
  into \eqref{eq:InvRegChar} yields
  \begin{align*}
    \frac{y-x}{f'(u(t,y\pm))-f'(u(t,x\pm)}
    &\geq \frac{\zeta_\pm(0)-\xi_\pm(0)}{f'(u_0(\zeta_\pm(0)-)))-f'(u_0(\xi_\pm(0)+))} + t\lambda \\
    &\geq \frac{1}{M_+} + t\lambda.
  \end{align*}
  Assuming $-t \lambda < (M_+)^{-1}$ yields the upper bound in
  \eqref{eq:UpLowBound}. Consider the lower bound in
  \eqref{eq:UpLowBound}. Assume $f'(u(t,y\pm))-f'(u(t,x\pm)) < 0$. By
  \eqref{eq:CharValueConcave},
  \begin{equation*}
    0 > f'(u(t,y\pm))-f'(u(t,x\pm)) \geq f'(u_0(\zeta_\pm(0)+))-f'(u_0(\xi_\pm(0)-)).
  \end{equation*}
  If $M_- = 0$ we obtain a contradiction. Assume $M_- > 0$. Inserting
  into \eqref{eq:InvRegChar} yields
  \begin{align*}
    \frac{y-x}{f'(u(t,y\pm))-f'(u(t,x\pm)}
    &\le \frac{\zeta_\pm(0)-\xi_\pm(0)}{f'(u_0(\zeta_\pm(0)+))-f'(u_0(\xi_\pm(0)-))} + t\lambda \\
    &\le -\frac{1}{M_-} + t\lambda.
  \end{align*}
  This yields the lower bound for \eqref{eq:UpLowBound}.
\end{proof}

Combining Lemmas \ref{lemma:VanishingDissip} 
and \ref{lemma:OneSidedBounds} we obtain

\begin{corollary}\label{cor:RegInitPathSub}
  Suppose \eqref{eq:Af} holds, and that $u_0 \in (L^\infty
  \cap L^1 \cap BV)(\R)$ satisfies \eqref{eq:LowUpOnInitial} 
  for some $0 \le M_-,M_+ \le \infty$. Let $z$ be a Lipschitz continuous path, satisfying
  \begin{equation}\label{eq:PathPieceCrit}
    z(0) = 0, \qquad -(M_+)^{-1} \le z(t) \le (M_-)^{-1},
    \qquad t \in [0,\tau],
  \end{equation}
  for some $\tau > 0$. 
  Let $v$ be the
  backward/forward entropy solution to
  \begin{equation}\label{eq:TimeHom}
    \partial_t v + \partial_x f(v) = 0, \qquad v(0,x) = u_0(x),
  \end{equation}
  on $[-(M_+)^{-1},(M_-)^{-1}]$. We apply the convention
  $(0)^{-1} = \infty$ and $(\infty)^{-1} = 0$.  Then $u(t,x) :=
  v(z(t),x)$ is a nondissipative entropy solution to \eqref{eq:sscl}
  on $[0,\tau]$.
\end{corollary}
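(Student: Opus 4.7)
The plan is to verify the spatial-Lipschitz hypothesis of Lemma~\ref{lemma:VanishingDissip} for the function $v$ using the Oleinik-type bound of Lemma~\ref{lemma:OneSidedBounds}, so that the conclusion of the corollary follows as a direct application of the former.

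First I would apply Lemma~\ref{lemma:OneSidedBounds} to $v$, one side at a time. On $[0,(M_-)^{-1}]$, $v$ is the forward entropy solution to \eqref{eq:TimeHom}, which is the case $\lambda=1$ of that lemma. On $[-(M_+)^{-1},0]$, by definition $v(t,x)=w(-t,x)$ where $w$ solves $\partial_t w - \partial_x f(w)=0$ with $w(0)=u_0$, which is the case $\lambda=-1$. Feeding both into \eqref{eq:UpLowBound} and reparameterising back to $t$ gives, for every $t\in(-(M_+)^{-1},(M_-)^{-1})$ and all $x<y$,
\begin{equation*}
-\Theta\!\left(\frac{1}{(M_-)^{-1}-t}\right) \le \frac{f'(v(t,y\pm))-f'(v(t,x\pm))}{y-x} \le \Theta\!\left(\frac{1}{(M_+)^{-1}+t}\right).
\end{equation*}
Since $f\in C^2$ is strictly convex, $f''$ is bounded below by a positive constant on the (bounded) essential range of $v$, and so these one-sided bounds on the difference quotients of $f'\circ v$ translate into a two-sided spatial Lipschitz bound on $v(t,\cdot)$, uniform for $t$ in any compact subinterval of $(-(M_+)^{-1},(M_-)^{-1})$.

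Next I would combine this bound with the path constraint \eqref{eq:PathPieceCrit}, which forces $z_\mathrm{min}\ge -(M_+)^{-1}$ and $z_\mathrm{max}\le (M_-)^{-1}$. Consequently, every $[t_1,t_2]\subset(z_\mathrm{min},z_\mathrm{max})$ is compactly contained in $(-(M_+)^{-1},(M_-)^{-1})$, so $v$ is uniformly Lipschitz in $x$ on $[t_1,t_2]$. This is precisely the hypothesis of Lemma~\ref{lemma:VanishingDissip}, whose conclusion identifies the (unique, by standard Lipschitz-flux theory) entropy solution of \eqref{eq:sscl} with $v(z(t),x)$ and establishes the nondissipative identity \eqref{eq:NondissipEnt}; together these are the content of the corollary.

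The calculation itself is short once Lemma~\ref{lemma:OneSidedBounds} is invoked; the one place demanding care is the bookkeeping in the extremal cases $M_\pm\in\{0,\infty\}$, where the convention $(0)^{-1}=\infty$, $(\infty)^{-1}=0$ must be carried through consistently. For instance, if $M_+=\infty$ then \eqref{eq:PathPieceCrit} forces $z\ge 0$ and the backward half of $v$ drops out entirely, while the positive-time Oleinik bound still yields Lipschitz control on compact subsets of $(0,(M_-)^{-1})$; the remaining configurations are treated by exactly the same reduction. I do not anticipate any further obstacles.
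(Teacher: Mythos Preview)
Your proposal is correct and follows essentially the same route as the paper: decompose $v$ into its forward ($\lambda=1$) and backward ($\lambda=-1$) pieces, apply Lemma~\ref{lemma:OneSidedBounds} to obtain the two-sided bound on the difference quotient of $f'\circ v$, and then invoke Lemma~\ref{lemma:VanishingDissip}. The paper's own proof is terser---it simply records the resulting inequality and says ``the result follows''---whereas you spell out the intermediate step of converting the bound on $f'\circ v$ into a spatial Lipschitz bound on $v$ via the strict convexity of $f$, and you treat the degenerate cases $M_\pm\in\{0,\infty\}$ explicitly; but the underlying argument is identical.
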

\begin{proof}
  Recall that $v$ is composed of a backward and a forward
  solution. That is,
  \begin{equation*}
    v(t,x) :=
    \begin{cases}
      w_+(t,x) \,  	&t \in [0,(M_-)^{-1}), \\
      w_-(-t,x) \, &t \in (-(M_+)^{-1},0),
    \end{cases}
  \end{equation*}
  where $w_\pm$ are the entropy solutions to
  \begin{equation*}
    \begin{cases}
      \partial_tw_+ + \partial_xf(w_+) = 0, \quad t \in (0,(M_-)^{-1}), \qquad w_+(0,x) = u_0(x), \\
      \partial_tw_- - \partial_xf(w_-) = 0, \quad t \in
      (0,(M_+)^{-1}), \qquad w_-(0,x) = u_0(x).
    \end{cases}
  \end{equation*}
  Consequently, by Lemma~\ref{lemma:OneSidedBounds},
  \begin{equation*}
    -\frac{1}{(M_-)^{-1}-t} \le \frac{f'(v(t,y\pm))-f'(v(t,x\pm))}{y-x} \le \frac{1}{(M_+)^{-1} + t}
  \end{equation*}
The result follows from Lemma~\ref{lemma:VanishingDissip}.
\end{proof}
We are now ready to prove Theorem~\ref{thm:OSB} for 
piecewise linear paths. 
However, it will be convenient for us to apply 
Lemma~\ref{lemma:BpBmProp}, so we begin with its proof.

\begin{proof}[Proof of Lemma~\ref{lemma:BpBmProp}]
  (i). Suppose $t \in \Bp_z$, $t > 0$. The argument for $\Bm_z$
  is analogous. By definition $\rho_z^+(t) \ge z(t)$. To arrive at a
  contradiction, suppose $\rho_z^+(t) > z(t)$. By continuity of
  $\rho_z^+$, there exits $\delta > 0$ such that $\rho_z^+(s) >
  \rho_z^+(t)$ for all $0 < t-s < \delta$, contradicting the fact that
  $\inf \seq{s \in [0,T]\,:\, \rho_z^+(s) \ge \rho_z^+(t)} =
  t$. Suppose $t \in \Bm_z \cap \Bp_z$, $t > 0$. Then, by the above,
  $\rho_z^-(t) = z(t) = \rho_z^+(t)$, which can only be true if $z(t)
  = 0$. Consequently
  \begin{equation*}
    \inf \seq{s \in [0,T]\,:\, \rho_z^+(s) \ge \rho_z^+(t)} = 0,
  \end{equation*}
  contradicting the fact that $t \in \Bp_z \setminus \seq{0}$.

(ii).  Assume that $t_n \uparrow t$ where $\seq{t_n}_{n \ge 1}
  \subset \Bp_z$. We need to show that $t \in \Bp_z$.  As $t_n \in
  \Bp_z$
  \begin{equation*}
    \seq{s \in [0,T]\,:\, \rho_z^+(s) \ge \rho_z^+(t_n)} = [t_n,T],
  \end{equation*}
  for all $n \ge 1$. Consequently, as $\rho_z^+(t_n) \uparrow
  \rho_z^+(t)$,
  \begin{equation*}
    \inf\seq{s \in [0,T]\,:\, \rho_z^+(s) \ge \rho_z^+(t)} = \inf
    \Big(\bigcap_{n \ge 1}[t_n,T]\Big) = t, 
  \end{equation*}
  showing that $t \in \Bp_z$. For the second statement, let $t =
  \sup\seq{\Bpm_z \cap [0,\tau]}$. By definition of the supremum, there
  exists an increasing sequence $\seq{t_n}_{n \ge 1} \subset \Bp_z
  \cap [0,\tau]$ such that $t_n \uparrow t$.
 
(iii). Suppose $(t_1,t_2] \cap \Bp_z = \emptyset$. Then
  $\rho_z^+$ is constant on $[t_1,t_2]$. The proof for $\Bm_z$, $\rho_z^-$ is 
  analogous. Assume there exist $s_1$ and $s_2$ with $t_1 < s_1 < s_2 \le
  t_2$ such that $\rho_z^+(s_2) > \rho_z^+(s_1)$.  Then
  \begin{equation*}
    \rho_z^+(s_2)-\rho_z^+(s_1) > 2\varepsilon(s_2-s_1) \mbox{ for some $\varepsilon > 0$.}
  \end{equation*}
  Let
  \begin{equation*}
    t^* = \inf \seq{s \in [s_1,s_2] \,:\, \rho_z^+(s_2)-\rho_z^+(s) \le \varepsilon (s_2-s)}.
  \end{equation*}
  By continuity of $\rho_z^+$ it follows that $s_1 < t^* < s_2$. As
  $t^* \notin \Bp_z$ it follows by (i) that
  \begin{equation*}
    \inf \seq{s \in [0,T]\,:\, \rho_z^+(s) \ge \rho_z^+(t^*)} = \hat{t} < t^*.
  \end{equation*}
  Since $\rho^+$ is nondecreasing, $\rho^+(\hat{t})=\rho^+(t^*)$, and
  $\rho^+$ is constant on $[\hat{t},t^*]$. 
  But then, for any $\hat{t} \le s \le t^*$,
  \begin{equation*}
    \rho_z^+(s_2)-\rho_z^+(s) =  \rho_z^+(s_2)-\rho_z^+(t^*) =
    \varepsilon (s_2-t^*) \le \varepsilon (s_2-s), 
  \end{equation*}
  contradicting the definition of $t^*$.

(iv). We first prove the following claim: If
  $\partial_-\rho_z^\pm(t) = 0$ for all $t \in (t_1,t_2]$, then
  $\rho_z^\pm$ is constant on $[t_1,t_2]$. Let $s_1$, $s_2$, $t^*$ and
  $\varepsilon$ be as in the proof of (iii), and we also assume that
  $\rho^+(s_2)>\rho^+(s_1)$.  By assumption
  $\partial_-\rho_z^+(t^*) = 0$, so there exists a $\delta > 0$ such that
  \begin{equation*}
    \rho_z^+(t^*)-\rho_z^+(s) \le \varepsilon(t^*-s) 
    \quad \mbox{ for all } \quad t^*-\delta < s < t^*.
  \end{equation*}
  But then, for any $t^*-\delta < s < t^*$,
  \begin{equation*}
    \rho_z^+(s_2)-\rho_z^+(s) = 
    \underbrace{\rho_z^+(s_2)-\rho_z^+(t^*)}_{=\varepsilon(s_2-t^*)} 
    + \underbrace{\rho_z^+(t^*) 
    - \rho_z^+(s)}_{\le \varepsilon(t^*-s)} \le \varepsilon(s_2-s),
  \end{equation*}
  contradicting the definition of $t^*$, thus finishing the proof of
  the claim. Consider the statement for $\Bp_z$. Let
  \begin{equation*}
    \mathscr{C}^+ := \seq{t \in (0,T]\,:\, \partial_- \rho_z^+(t) > 0} \cup \seq{0}.
  \end{equation*}
  Let $t \in \Bp_z \setminus \seq{0}$. Suppose there exits $\delta >
  0$ such that for all $0 < t-s < \delta$, $\partial_-\rho_z^+(s) =
  0$. Then, as $\rho_z^+$ is constant on $(t-\delta,t)$, this
  contradicts the fact that $t \in \Bp_z \setminus
  \seq{0}$. Consequently, there exists a sequence $t_n \uparrow t$ as
  $n \rightarrow \infty$ with $t_n \in \mathscr{C}^+$ for all $n \ge
  1$. Hence $\Bp_z \subset \mathrm{cl}_-(\mathscr{C}^+)$. Next,
  suppose that $t \in \mathscr{C}^+ \setminus \seq{0}$. As
  $\partial_-\rho_z^+(t) > 0$ it follows that $\rho_z^+(s) <
  \rho_z^+(t)$ for all $s < t$. Hence,
  \begin{equation*}
    \inf\seq{s \in [0,T]\,:\, \rho_z^+(s) \ge \rho_z^+(t)} = t,
  \end{equation*}
  so that $\mathscr{C}^+ \subset \Bp_z$ by (i). We then conclude, by
  (ii), that $\mathrm{cl}_-(\mathscr{C}^+) \subset \Bp_z$. The proof
  for $\Bm_z$ is analogous.
\end{proof}

\begin{lemma}\label{lemma:OSBC1Path}
Suppose \eqref{eq:Af} holds and $u_0 \in (L^\infty \cap L^1 \cap BV)(\R)$ 
satisfies \eqref{eq:LowUpOnInitial}. Fix a piecewise linear path $z(t)$with $z(0) = 0$. 
Let $u$ be the corresponding entropy solution to \eqref{eq:sscl}, and 
$\rho^+_z,\rho^-_z$ be defined by \eqref{eq:TruncRunningMaxMin}. 
Then, for any $t \in [0,T]$,
 \begin{equation*}
     -\frac{1}{\rho_z^+(t)-z(t)} \le 
     \frac{f'(u(t,y\pm))-f'(u(t,x\pm))}{y-x} \le \frac{1}{z(t)-\rho_z^-(t)},
     \quad x,y\in \R, \, x<y.
  \end{equation*}
  We apply the convention $(0)^{-1} = \infty$.
\end{lemma}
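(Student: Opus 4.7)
I will argue by induction on the linear pieces of $z$. Let $0 = t_0 < t_1 < \cdots < t_N = T$ be the break points of the piecewise linear path, with constant slope $\lambda_n := (z(t_{n+1})-z(t_n))/(t_{n+1}-t_n)$ on $[t_n,t_{n+1}]$. Introduce $\widehat M_+(t), \widehat M_-(t) \in [0,\infty]$ as the sharpest constants for which the one-sided inequalities
\begin{equation*}
-\widehat M_-(t) \le \frac{f'(u(t,y))-f'(u(t,x))}{y-x} \le \widehat M_+(t), \quad x<y,
\end{equation*}
hold (with $\widehat M_\pm(t)=\infty$ if no finite bound exists). Using $0^{-1}=\infty$ and $\infty^{-1}=0$, the desired statement is equivalent to
\begin{equation*}
\widehat M_+(t)^{-1} \ge z(t) - \rho_z^-(t), \qquad \widehat M_-(t)^{-1} \ge \rho_z^+(t) - z(t),
\end{equation*}
which I prove by induction on the index $n$ such that $t\in[t_n,t_{n+1}]$.

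The base case $t=0$ follows from \eqref{eq:LowUpOnInitial} together with $z(0)=0$, $\rho_z^+(0) = 1/M_-$ and $\rho_z^-(0) = -1/M_+$. For the inductive step, fix $t \in [t_n,t_{n+1}]$. On this piece the function $u$ is the Kru\v{z}kov entropy solution of $\partial_t u + \lambda_n \partial_x f(u) = 0$ with initial datum $u(t_n,\cdot) \in (L^\infty \cap BV)(\R)$ obeying the bounds $\widehat M_\pm(t_n)$. A time-translated application of Lemma~\ref{lemma:OneSidedBounds} (with $M_\pm$ there taken to be $\widehat M_\pm(t_n)$ and $t$ replaced by $t-t_n$) yields
\begin{equation*}
\widehat M_+(t)^{-1} \ge \widehat M_+(t_n)^{-1} + (z(t)-z(t_n)), \qquad \widehat M_-(t)^{-1} \ge \widehat M_-(t_n)^{-1} - (z(t)-z(t_n)),
\end{equation*}
whenever the right-hand sides are non-negative (and no information otherwise, which is recorded as $\widehat M_\pm(t)^{-1} \ge 0$). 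Combined with the inductive hypothesis at $t_n$ this gives $\widehat M_+(t)^{-1} \ge z(t) - \rho_z^-(t_n)$ and $\widehat M_-(t)^{-1} \ge \rho_z^+(t_n) - z(t)$.

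It remains to replace $\rho_z^\pm(t_n)$ by $\rho_z^\pm(t)$, which is a short case analysis on the sign of $\lambda_n$. If $\lambda_n \ge 0$, then $\rho_z^-$ is constant on the piece, so the $\widehat M_+$ estimate closes verbatim; and $\rho_z^+(t)$ either coincides with $\rho_z^+(t_n)$ (closing the $\widehat M_-$ estimate) or equals $z(t)$, in which case $\rho_z^+(t)-z(t) = 0$ and the $\widehat M_-$ bound is trivial. The case $\lambda_n \le 0$ is symmetric, with the roles of $\rho_z^+$ and $\rho_z^-$ swapped. The main obstacle is bookkeeping of the degenerate regime where $\widehat M_\pm(t) = \infty$; the key observation is that this regime coincides precisely with the regime where the target right-hand side collapses to zero (a new running extremum of $z$ is being attained), so the claim is preserved across the induction step by the $\infty^{-1}=0$ convention. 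The extension from interior points $x,y$ to one-sided limits $u(t,x\pm), u(t,y\pm)$ is automatic whenever the resulting bound is finite, by the $BV_{\mathrm{loc}}$ regularity it forces.
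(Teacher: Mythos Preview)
Your proof is correct and follows essentially the same approach as the paper: induction along the linear pieces of $z$, applying Lemma~\ref{lemma:OneSidedBounds} on each piece, then a case analysis to pass from $\rho_z^\pm(t_n)$ to $\rho_z^\pm(t)$. The only cosmetic difference is that the paper restricts attention to the breakpoints $t_n$ (refining the partition to capture a generic $t$) and organises the case analysis via membership of $t_{n+1}$ in the sets $\Bpm_z$ from Lemma~\ref{lemma:BpBmProp}, whereas you argue for all $t$ in a piece directly from the sign of the slope $\lambda_n$; the two bookkeepings are equivalent, yours being slightly more self-contained.
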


\begin{proof}
  Let $0 \le t \le T$ be given. By assumption there exists a finite
  sequence $\seq{t_n}_{n=0}^N$, $0 = t_0 <\dots < t_N = T$, such that the graph 
  of $z$ is a straight line on each interval
  $[t_n,t_{n+1}]$, $n= 0,\dots, N-1$. Without loss of generality, we 
  will prove the result for $t\in \seq{t_n}_{n=0}^N$. 
  Let $P_n$ be the statement of the
  lemma for $t = t_n$. We need to prove that $P_{n+1}$ holds
  given the validity of $P_n$, where $P_n$ precisely reads
  \begin{equation*}
    -\underbrace{\frac{1}{\rho^+_z(t_n)-z(t_{n})}}_{M_-^n} \le
    \underbrace{\frac{f'(u(t_{n},y\pm))-f'(u(t_{n},x\pm))}{y-x}}_{X_n}
    \le  \underbrace{\frac{1}{z(t_{n})-\rho_z^-(t_n)}}_{M_+^n}, \quad x<y.
  \end{equation*}
 By Lemma~\ref{lemma:OneSidedBounds},
  \begin{equation*}
    -\Theta\left(\frac{1}{(M_-^n)^{-1}-\Delta z_n}\right) \le X_{n+1}
    \le \Theta\left(\frac{1}{(M_+^n)^{-1} + \Delta z_n}\right), 
  \end{equation*}
  where $\Delta z_n =z(t_{n+1})-z(t_n)$. Therefore,
  \begin{equation*}
    -\Theta\left(\frac{1}{\rho^+_z(t_n)-z(t_{n+1})}\right) 
    \le X_{n+1} \le \Theta\left(\frac{1}{z(t_{n+1})-\rho^-_z(t_n)}\right).
  \end{equation*}
 We consider three different cases:
  \begin{displaymath}
   \text{(i)}: t_{n+1} \in \Bp_z, \quad \text{(ii)}: t_{n+1} \in \Bm_z, \quad 
   \text{(iii)}:t_{n+1} \notin \Bp_z \cup \Bm_z.
  \end{displaymath}
 Consider (i). By Lemma~\ref{lemma:BpBmProp}~(i), $z(t_{n+1}) = \rho^+_z(t_{n+1})$. 
 If $(t_n,t_{n+1}] \cap \Bm_z \neq \emptyset$, then $t_{n+1} \in \Bm_z$ by 
 Lemma~\ref{lemma:BpBmProp}~(iv), but this cannot be the case 
 as $\Bp_z \cap \Bm_z = \seq{0}$. Consequently, by 
 Lemma~\ref{lemma:BpBmProp}~(iii), $\rho^-_z(t_n) = \rho^-_z(t_{n+1})$. 
 Hence $P_{n+1}$ follows in case~(i). Case~(ii) is analogous. In case~(iii), we 
 argue as in case~(i) to conclude that $\rho_z^\pm(t_n) = \rho_z^\pm(t_{n+1})$. 
 This proves $P_{n+1}$ in case~(iii). It remains to observe that $P_0$ 
 holds by assumption \eqref{eq:LowUpOnInitial}.
\end{proof}

\subsection{Equivalence}
Let us first, for convenience, collect some consequences of the 
above results in terms of the equivalence relation, see Definition~\ref{def:EquivOfPaths}.
\begin{corollary}\label{cor:PiecewiseEquiv}
  Assume that $u_0 \in (L^\infty \cap L^1 \cap BV)(\R)$. Let $z_1$ and $z_2$
  be Lipschitz continuous paths on $[t_1,t_2]$, and set 
  $\Delta z_i =z_i(t_2)-z_i(t_1)$, $i = 1,2$. Then the following three statements hold:
  \begin{enumerate}
  \item If $\Delta z_1 = \Delta z_2$ and both paths are monotone. Then
    $z_1 \sim z_2$ on  $[t_1,t_2]$ with initial condition $u_0$. 
  \item If $z_1 = z_2 \circ \alpha$ for some nondecreasing,
    Lipschitz continuous, surjective map $\alpha:[t_1,t_2] \rightarrow
    [t_1,t_2]$, then $z_1 \sim z_2$ on $[t_1,t_2]$ with initial
    condition $u_0$.
  \item If $u_0$ satisfies \eqref{eq:LowUpOnInitial} and 
    \begin{equation*}
      -(M_+)^{-1} \le z_i(t)-z_i(t_1) \le (M_-)^{-1}, \quad \text{for
        all} \quad t_1 \le t \le t_2, 
    \end{equation*}
    and $\Delta z_1 = \Delta z_2$, then $z_1 \sim z_2$ on $[t_1,t_2]$
    with initial condition $u_0$.
  \end{enumerate}
\end{corollary}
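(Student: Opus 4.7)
The plan is to handle all three parts as essentially direct applications of Lemma~\ref{lemma:EntSub} and Corollary~\ref{cor:RegInitPathSub}. In each case I will exhibit a common reference solution $v$ of the time-homogeneous conservation law such that $u^i(t,x) = v(\psi_i(t), x)$ for suitable nondecreasing reparameterizations $\psi_i$, and then note that the hypothesis $\Delta z_1 = \Delta z_2$ (or $\alpha(t_2) = t_2$ in part (2)) forces $\psi_1(t_2) = \psi_2(t_2)$. Throughout I would freely shift time to assume $t_1 = 0$ and shift the paths so $z_i(0) = 0$, since \eqref{eq:sscl} is invariant under these operations.

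For part (1), assume first that both $z_i$ are nondecreasing (so $\Delta z_1 = \Delta z_2 \geq 0$). Let $v$ be the entropy solution to $\partial_t v + \partial_x f(v) = 0$ on $[0, \Delta z_1] \times \R$ with initial datum $u_0$. Applying Lemma~\ref{lemma:EntSub} with $\tilde z(s) = s$ and inner path $z_i$ identifies $u^i(t, x) = v(z_i(t), x)$, and evaluating at $t_2$ gives $u^1(t_2) = v(\Delta z_1) = v(\Delta z_2) = u^2(t_2)$. The case of two nonincreasing paths reduces to the nondecreasing one by passing to the backward entropy solution (equivalently, reflecting the flux $f \mapsto -f$) before invoking Lemma~\ref{lemma:EntSub}.

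For part (2), I would take $u^2$ itself as the reference solution and invoke Lemma~\ref{lemma:EntSub} with $z = \alpha$ (nondecreasing and Lipschitz) playing the role of the inner path and $\tilde z = z_2$ the outer path. The lemma then says that $(t,x) \mapsto u^2(\alpha(t), x)$ is the entropy solution associated with the path $\tilde z \circ z = z_2 \circ \alpha = z_1$ and initial datum $u_0$. By uniqueness this function equals $u^1$, and since $\alpha$ is nondecreasing and surjective onto $[0, t_2]$ we must have $\alpha(t_2) = t_2$, so $u^1(t_2) = u^2(\alpha(t_2)) = u^2(t_2)$.

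For part (3), the condition $-(M_+)^{-1} \le z_i(t) \le (M_-)^{-1}$ places both shifted paths inside the regularity window of Corollary~\ref{cor:RegInitPathSub}. That corollary gives $u^i(t,x) = v(z_i(t), x)$ for the common backward/forward entropy solution $v$ on $[-(M_+)^{-1}, (M_-)^{-1}]$ with initial datum $u_0$, and evaluation at $t_2$ yields $u^1(t_2) = v(\Delta z_1) = v(\Delta z_2) = u^2(t_2)$. The proof has no serious obstacle; the only point requiring care is bookkeeping, namely verifying in each part that the inner and outer paths of the cited substitution lemma satisfy its monotonicity, Lipschitz and range hypotheses, which is precisely what each set of assumptions provides.
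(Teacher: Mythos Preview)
Your proposal is correct and follows essentially the same approach as the paper: each part is reduced to Lemma~\ref{lemma:EntSub} or Corollary~\ref{cor:RegInitPathSub} after a time shift, yielding a common reference solution $v$ so that $u^i(t,x)=v(z_i(t),x)$ and the conclusion follows from $\Delta z_1=\Delta z_2$ (resp.\ $\alpha(t_2)=t_2$). The only cosmetic difference is in part~(1) for nonincreasing paths, where the paper phrases the reduction as applying Lemma~\ref{lemma:EntSub} with inner paths $-z_i$ and outer path $\tilde z(t)=-t$, which is exactly your ``flip the flux / pass to the backward solution'' observation.
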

\begin{proof}
  Let $u^1,u^2$ denote the entropy solutions to \eqref{eq:sscl}
  associated with the paths $z_1$, $z_2$ and initial condition $u_0$.
  
  \noindent\emph{$(1)$}. Suppose $z_1$ and $z_2$ are nondecreasing. We
  need to show that $u^1(t_2) = u^2(t_2)$. 
  To apply Lemma~\ref{lemma:EntSub}, assume
  for notational ease $[t_1,t_2] = [0,\tau]$. Let $\tilde{z}(t) = t$
  in Lemma~\ref{lemma:EntSub}. Then $u^i(t,x) = v(z_i(t),x)$, $i = 1,2$,
  where $v$ is the entropy solution to
  \begin{equation*}
    \partial_tv + \partial_xf(v) = 0, \qquad v(z(0),x) = u_0(x).
  \end{equation*}
  It follows that $u^1(\tau) = v(z_1(\tau)) = v(z_2(\tau)) =
  u^2(\tau)$. If $z_1,z_2$ are nonincreasing, we consider
  instead $\tilde{z}(t) = -t$ and the paths $-z_1,-z_2$, and proceed as above. 

  \noindent\emph{$(2)$}. By Lemma~\ref{lemma:EntSub}, 
  $u(t,x) := u^2(\alpha(t),x)$ satisfies
  \begin{equation*}
    \partial_tu + (z_2 \circ \alpha)'\partial_xf(u) = 0, \qquad u(0,x)
    = u^2(\alpha(t_1),x) = u_0(x). 
  \end{equation*}
  But then $u = u^1$, and so $u^1(t_2) = u^2(\alpha(t_2)) =
  u^2(t_2)$.

  \noindent\emph{$(3)$}. Set $\tilde{z}_i(t) = z_i(t+t_1)-z_i(t_1)$, and apply 
  Corollary~\ref{cor:RegInitPathSub} with $\tilde{z}_i$, $i = 1,2$. This yields
  \begin{equation*}
    u^1(t_2) = \tilde{u}^1(t_2-t_1) = v(\tilde{z}_1(t_2-t_1)) =
    v(\tilde{z}_2(t_2-t_1)) = \tilde{u}^2(t_2-t_1) = u^2(t_2),  
  \end{equation*}
  where $\tilde{u}^1$ and $\tilde{u}^2$ are the entropy solutions to
  \eqref{eq:sscl} with $z = \tilde{z}_1$ and $z = \tilde{z}_2$, respectively.
\end{proof}

Next we provide preliminary version of Theorem \ref{thm:GeneralEquiv}. 
 
\begin{lemma}\label{lemma:EquivalencePiecewise}
  Suppose $f$ satisfies \eqref{eq:Af}. Let $z_1$, $z_2$ be 
  piecewise linear, continuous paths on
  $[0,\tau]$, satisfying $z_1(0) = z_2(0) = 0$ and 
  $z_1(\tau) = z_2(\tau)$. Suppose there exist 
  piecewise linear nondecreasing surjective maps 
  $\alpha_i:[0,\tau] \rightarrow [0,\tau]$, $i =1,2$,
  such that $\rho^\pm_{z_1} \circ \alpha_1 = \rho^\pm_{z_2} \circ
  \alpha_2$. Let $u^1$ and $u^2$ be entropy solutions to
  \eqref{eq:sscl} with initial condition $u_0 \in (L^\infty \cap L^1
  \cap BV)(\R)$ satisfying \eqref{eq:LowUpOnInitial}. 
  Then $u^1(\tau) = u^2(\tau)$. That is, $z_1 \sim
  z_2$ on $[0,\tau]$ for all $u_0 \in (L^\infty \cap L^1 \cap BV)(\R)$
  satisfying \eqref{eq:LowUpOnInitial}.
\end{lemma}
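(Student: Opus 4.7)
The plan is to reduce, using the reparameterization assumption, to the case $\rho^\pm_{z_1}=\rho^\pm_{z_2}$; then to partition $[0,\tau]$ into finitely many intervals on which either $z_1\equiv z_2$ or the envelope $[\rho^-,\rho^+]$ is constant; and finally to propagate the identity $u^1=u^2$ forward in time, alternating between uniqueness of the entropy solution on the former kind of piece and Corollary~\ref{cor:PiecewiseEquiv}(3) on the latter.

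First I would invoke Corollary~\ref{cor:PiecewiseEquiv}(2) to obtain $z_i\sim z_i\circ\alpha_i$ on $[0,\tau]$ for $i=1,2$; note that $z_i\circ\alpha_i$ remains piecewise linear. Since $\rho^\pm_{z\circ\alpha}=\rho^\pm_z\circ\alpha$ whenever $\alpha$ is nondecreasing surjective, and since $\alpha_i(\tau)=\tau$, the hypothesis \eqref{eq:path-relation} becomes $\rho^\pm_{z_1\circ\alpha_1}=\rho^\pm_{z_2\circ\alpha_2}$ while $(z_i\circ\alpha_i)(\tau)=z_i(\tau)$ is preserved. It therefore suffices to treat the case $\rho^\pm_{z_1}=\rho^\pm_{z_2}=:\rho^\pm$ and, in particular, $\Bpm_{z_1}=\Bpm_{z_2}=:\Bpm$. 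By Lemma~\ref{lemma:BpBmProp}(iv), $\Bpm\setminus\seq{0}$ is a finite disjoint union of half-open intervals; refining the joint piecewise linear partition of $z_1,z_2$ by these endpoints yields $0=t_0<t_1<\cdots<t_N=\tau$ such that each $(t_{n-1},t_n]$ lies entirely in $\Bp$, $\Bm$, or the complement $[0,\tau]\setminus(\Bp\cup\Bm)$.

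A key observation is that $z_1=z_2$ at every partition point $t_n$. Indeed, $z_i=\rho^+$ on $\Bp$ and $z_i=\rho^-$ on $\Bm$ by Lemma~\ref{lemma:BpBmProp}(i), so $z_1\equiv z_2$ throughout $\Bp\cup\Bm$; continuity of $z_i$ then forces $z_1(t_n)=z_2(t_n)$ at every endpoint of a complement piece adjacent to a $\Bpm$-piece, while $z_1(0)=z_2(0)=0$ and $z_1(\tau)=z_2(\tau)$ are given. On a complement subinterval $(t_{n-1},t_n]$, Lemma~\ref{lemma:BpBmProp}(iii) moreover shows that $\rho^\pm$ is constant, which I would rewrite as $\rho^-(t_{n-1})\le z_i(t)\le\rho^+(t_{n-1})$ for $t\in[t_{n-1},t_n]$. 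An induction on $n$ then closes the argument: assuming $u^1(t_{n-1})=u^2(t_{n-1})$, on a $\Bp$- or $\Bm$-piece $z_1\equiv z_2$, so $u^1(t_n)=u^2(t_n)$ by uniqueness of the entropy solution; on a complement piece, Lemma~\ref{lemma:OSBC1Path} applied at $t_{n-1}$ furnishes constants $M_-^{t_{n-1}}=(\rho^+(t_{n-1})-z_i(t_{n-1}))^{-1}$ and $M_+^{t_{n-1}}=(z_i(t_{n-1})-\rho^-(t_{n-1}))^{-1}$ for which the envelope bound rewrites as $-(M_+^{t_{n-1}})^{-1}\le z_i(t)-z_i(t_{n-1})\le(M_-^{t_{n-1}})^{-1}$, while $\Delta z_1=\Delta z_2$ follows from agreement at both endpoints, so Corollary~\ref{cor:PiecewiseEquiv}(3) yields $u^1(t_n)=u^2(t_n)$.

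The main obstacle I anticipate is the continuity/adjacency step that forces $z_1=z_2$ at the endpoints of the complement pieces: on the interior of such a piece the two paths may genuinely differ, and it is only this matching of path heights at both endpoints, together with the frozen envelope, that places the complement pieces into the exact form required by Corollary~\ref{cor:PiecewiseEquiv}(3).
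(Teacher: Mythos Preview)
Your approach is essentially identical to the paper's: reduce to $\rho^\pm_{z_1}=\rho^\pm_{z_2}$ via Corollary~\ref{cor:PiecewiseEquiv}(2), decompose $[0,\tau]$ according to $\Bp\cup\Bm$, and alternate between uniqueness (on $\Bpm$-pieces where $z_1\equiv z_2$) and Corollary~\ref{cor:PiecewiseEquiv}(3) together with Lemma~\ref{lemma:OSBC1Path} (on complement pieces).

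There is one small but genuine slip. You refine the partition by the joint piecewise linear breakpoints of $z_1,z_2$, and then assert that $z_1(t_n)=z_2(t_n)$ at \emph{every} partition point. Your own justification only covers points in $\Bp\cup\Bm$, points adjacent to a $\Bpm$-piece (by continuity), and $0,\tau$; it says nothing about a piecewise linear breakpoint lying in the interior of a complement interval, and indeed such a point need not satisfy $z_1=z_2$ (take $M_\pm=1$, $\tau=2$, $z_1$ the tent $0\to0.5\to0$ and $z_2$ the tent $0\to0.3\to0$: then $\Bp\cup\Bm=\{0\}$, yet $z_1(1)\neq z_2(1)$). With that extra breakpoint in the partition, the step $\Delta z_1=\Delta z_2$ fails and Corollary~\ref{cor:PiecewiseEquiv}(3) cannot be invoked. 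The fix is simply to drop the piecewise linear refinement: take as partition points only $0$, $\tau$, and the endpoints of the half-open intervals comprising $\Bp\cup\Bm\setminus\{0\}$ from Lemma~\ref{lemma:BpBmProp}(iv). This is precisely the paper's choice, every partition point then falls into one of the three cases you listed, and the rest of your argument goes through unchanged.
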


\begin{proof}
  Set $\tilde{z}_i = z_i \circ \alpha_i$, $i = 1,2$. By
  Corollary~\ref{cor:PiecewiseEquiv}~(2), $\tilde{z}_i \sim z_i$.
  Furthermore, $\tilde{z}_i$ is piecewise linear with
  $\tilde{z}_i(0) = 0$, and $\tilde{z}_1(\tau) = \tilde{z}_2(\tau)$. 
  Hence, we might as well assume $\rho^\pm_{z_1} =
  \rho^\pm_{z_2} = \rho^\pm$ which also implies $\Bpm_{z_1} =
  \Bpm_{z_2} =: \Bpm$.

  By Lemma~\ref{lemma:BpBmProp}~(iv),
  \begin{equation*}
    \Bm \cup \Bp \setminus \seq{0}= \bigcup_{i = 1}^N (s_i,t_i],
    \quad t_0 = 0 \le s_1 < t_1 < \dots < s_N < t_N \le \tau =s_{N+1},
  \end{equation*}
  for some $0 \le N < \infty$. Suppose $u^1(t_n) = u^2(t_n)$. We want
  to show that $u^1(s_{n+1}) = u^2(s_{n+1})$. Assume $t_n < \tau$,
  for otherwise $t_n = \tau = s_{n+1}$ and we are done. Then $(t_n,s_{n+1}]
  \cap (\Bm \cup \Bp) = \emptyset$. By
  Lemma~\ref{lemma:BpBmProp}~(iii),
  \begin{equation}\label{eq:PathBoundSmooth}
    \rho^-(t_n) \le z_1(t),z_2(t) \le \rho^+(t_n), 
  \end{equation}
  for all $t \in [t_n,s_{n+1}]$. Moreover, due to
  Lemma~\ref{lemma:BpBmProp}~(i), $z_1(t_n) = z_2(t_n) =: a$ 
  and $z_1(s_{n+1}) = z_2(s_{n+1})$. 
  By Lemma~\ref{lemma:OSBC1Path},
  \begin{equation*}
    -\underbrace{\frac{1}{\rho^+(t_n)-a}}_{M_-} 
    \le \frac{f'(u(t_n,y\pm))-f'(u(t_n,x\pm))}{y-x} 
    \le \underbrace{\frac{1}{a-\rho^-(t_n)}}_{M_+}, \quad x<y.
  \end{equation*}
  By Corollary~\ref{cor:PiecewiseEquiv}~(3), since $z_1$ and $z_2$
  satisfy \eqref{eq:PathBoundSmooth}, it follows that $z_1 \sim z_2$ on
  $[t_n,s_{n+1}]$, and so $u^1(s_{n+1}) = u^2(s_{n+1})$.

  Suppose $u^1(s_n) = u^2(s_n)$. Then, by the continuity of $z$,
  $(s_n,t_n] \subset \Bp$ or $(s_n,t_n] \subset \Bm$. By
  Lemma~\ref{lemma:BpBmProp}~(i) $z_1(t) = z_2(t)$ on $[s_n,t_n]$, and
  so $u^1(t_n) = u^2(t_n)$.
\end{proof}

In order to apply the above lemma we need to know when 
there exist suitable $\alpha_1$ and $\alpha_2$. 
This is answered by the following observation.
\begin{lemma}\label{lemma:PiecewisePrecomEquiv}
  Let $\rho_1,\rho_2$ be nondecreasing and continuous on $[0,\tau]$
  satisfying $\rho_1(0) = \rho_2(0)$ and $\rho_1(\tau) =
  \rho_2(\tau)$. Let
 \begin{displaymath}
  \mathscr{S} := \seq{(s_1,s_2)\in [0,\tau]^2\,:\, \rho_1(s_1) = \rho_2(s_2)}.
 \end{displaymath}
Suppose there exist partitions $0 = s_i^0 < s_i^1 < \cdots < s_i^{N_i} = \tau$ 
such that for each rectangle $Q_{j,k} = [s_1^j,s_1^{j+1}] \times [s_2^k,s_2^{k+1}]$ 
satisfying $Q_{j,k} \cap \mathscr{S} \neq \emptyset$,  
  \begin{itemize}
   \item[(i)]  $[s_1^j, s_1^{j+1}] \subset \mathrm{cl}_+(\Bp_{\rho_1})$ 
   or $[s_2^k, s_2^{k+1}] \subset \mathrm{cl}_+(\Bp_{\rho_2})$ where $\mathrm{cl}_+$ 
   denotes the closure with respect to decreasing sequences, or
   \item[(ii)] $(s_1^j, s_1^{j+1}] \cap \Bp_{\rho_1} = \emptyset$ or $(s_2^k, s_2^{k+1}] \cap \Bp_{\rho_2} 
   = \emptyset$.
  \end{itemize}
  Then there exists $\alpha_1,\alpha_2 \in \mathscr{A}_\tau$
  such that $\rho_1 \circ \alpha_1 = \rho_2 \circ \alpha_2$.
\end{lemma}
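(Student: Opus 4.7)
First, recast the goal geometrically: a pair $(\alpha_1,\alpha_2)\in\mathscr{A}_\tau\times\mathscr{A}_\tau$ satisfying $\rho_1\circ\alpha_1=\rho_2\circ\alpha_2$ is exactly a continuous, coordinatewise nondecreasing curve $t\mapsto\alpha(t):=(\alpha_1(t),\alpha_2(t))$ in $\mathscr{S}$, with $\alpha(0)=(0,0)$, $\alpha(\tau)=(\tau,\tau)$, and each coordinate surjective onto $[0,\tau]$ (a nondecreasing surjection onto an interval is automatically continuous). Since $\rho_1,\rho_2$ are continuous nondecreasing with matching endpoints, $\mathscr{S}=\bigcup_v I_1(v)\times I_2(v)$, where $I_i(v):=\rho_i^{-1}(\{v\})$ is a closed (possibly degenerate) interval; these product rectangles form a monotone staircase from $(0,0)$ to $(\tau,\tau)$ as $v$ ranges over $[\rho_1(0),\rho_1(\tau)]$.

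Second, decode the hypotheses via Lemma~\ref{lemma:BpBmProp}. For continuous nondecreasing $\rho_i$, $\mathrm{cl}_+(\Bp_{\rho_i})$ is the complement in $[0,\tau]$ of the open intervals on which $\rho_i$ is constant; hence hypothesis (i) asserts that $\rho_1$ or $\rho_2$ is strictly increasing on the corresponding side of $Q_{j,k}$, while (ii) --- via Lemma~\ref{lemma:BpBmProp}~(iii) --- asserts that $\rho_1$ or $\rho_2$ is constant on its side. Thus on every grid rectangle $Q_{j,k}$ meeting $\mathscr{S}$, at least one axis carries a $\rho_i$ that is either strictly monotone or constant.

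Third, construct $\alpha$ piecewise. By the monotonicity of the staircase, the rectangles $Q_{j,k}$ meeting $\mathscr{S}$ can be enumerated $Q^1,\ldots,Q^M$ in staircase order, with consecutive ones sharing a corner that necessarily lies in $\mathscr{S}$. Pick times $0=\tau_0<\tau_1<\cdots<\tau_M=\tau$ and on each $[\tau_{n-1},\tau_n]$ route $\alpha$ from the shared corner of $Q^{n-1},Q^n$ to that of $Q^n,Q^{n+1}$, through $Q^n\cap\mathscr{S}$. If $\rho_1$ is strictly increasing on $Q^n$'s $s_1$-side, let $\alpha_1$ sweep that interval linearly and let $\alpha_2$ be a monotone selection from $\rho_2^{-1}\bigl(\rho_1\circ\alpha_1\bigr)$, which is continuous thanks to strict monotonicity of $\rho_1$. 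If instead $\rho_1$ is constant on that side at some level $v$, then $Q^n\cap\mathscr{S}$ is the full rectangle $[s_1^{j},s_1^{j+1}]\times\bigl(I_2(v)\cap[s_2^k,s_2^{k+1}]\bigr)$; let $\alpha_1$ sweep the $s_1$-side while $\alpha_2$ slides monotonically through the corresponding $s_2$-sub-interval. The symmetric cases for $\rho_2$ are handled identically.

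\textbf{Main obstacle.} The subtle point is the combinatorial coordination of these local constructions so that (a) adjacent pieces glue continuously at the shared corners, and (b) both $\alpha_1$ and $\alpha_2$ are surjective. On each $Q^n$ the hypothesis leaves exactly one coordinate truly free: the other is either forced by strict monotonicity of the opposite $\rho_i$, or trivially pinned inside a single constant interval. This is precisely the freedom needed to let the free coordinate sweep its entire side of $Q^n$; aggregating over $n$ then covers every partition point on both axes and yields surjectivity, while selecting the endpoints $\alpha(\tau_n)$ at the shared corners in $\mathscr{S}$ handles continuity.
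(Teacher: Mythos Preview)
Your overall strategy---reduce to finding a monotone continuous curve in $\mathscr{S}$ from $(0,0)$ to $(\tau,\tau)$, and build it box by box---is the same as the paper's. Your decoding of hypotheses (i) and (ii) as ``strictly increasing on one side'' versus ``constant on one side'' is correct and is exactly what drives the local construction. However, there is a genuine gap in the gluing step.

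\textbf{The gap.} You assert that the rectangles $Q_{j,k}$ meeting $\mathscr{S}$ can be enumerated in staircase order ``with consecutive ones sharing a corner that necessarily lies in $\mathscr{S}$'', and then route $\alpha$ through these corners. This is false in general: consecutive rectangles share an \emph{edge}, and $\mathscr{S}$ typically crosses that edge at an interior point, not at a grid vertex. For a concrete instance take $\tau=1$, $\rho_1(s)=s$, and $\rho_2$ piecewise linear with $\rho_2(1/4)=\rho_2(3/4)=1/2$; with the uniform partition into thirds on each axis, $\mathscr{S}$ passes from $Q_{0,0}$ to $Q_{1,0}$ through the point $(1/3,1/6)$, and neither corner $(1/3,0)$ nor $(1/3,1/3)$ of the shared edge lies in $\mathscr{S}$. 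So your waypoints do not exist, and the piecewise construction cannot be anchored at them.

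The paper avoids this by not pre-selecting waypoints. Its local claim is: from \emph{any} point $\tilde{s}\in\partial^-Q_{j,k}\cap\mathscr{S}$ (not necessarily a corner) one can reach \emph{some} point of $\partial^+Q_{j,k}\cap\mathscr{S}$ by a monotone continuous curve inside $Q_{j,k}\cap\mathscr{S}$. It realizes this via the explicit selections $\gamma_1(\lambda)=(\lambda,g_2(\rho_1(\lambda)))$ or $\gamma_2(\lambda)=(g_1(\rho_2(\lambda)),\lambda)$, where $g_i(\xi)=\min\{r\ge\tilde{s}_i:\rho_i(r)\ge\xi\}$, and the role of (i)/(ii) is precisely to guarantee continuity of the relevant $g_i$ (right-continuity requires the $\mathrm{cl}_+(\Bp_{\rho_i})$ condition). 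The global path is then obtained inductively, with the exit point of one box serving as the entry point of the next; a final monotone lattice argument (the paper's Claim~2) connects the terminal point on $\partial^+([0,\tau]^2)$ to $(\tau,\tau)$.

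\textbf{A minor point.} Your ``main obstacle'' (b), surjectivity of $\alpha_1,\alpha_2$, is not an obstacle at all: any continuous coordinatewise-nondecreasing curve from $(0,0)$ to $(\tau,\tau)$ automatically has each coordinate surjective onto $[0,\tau]$ by the intermediate value theorem. The real work is entirely in (a), and that is where your corner assumption fails.
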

\begin{proof}
Suppose we can find a continuous path $\alpha = (\alpha_1,\alpha_2)$ with 
$\alpha_1,\alpha_2 \in \mathscr{A}_\tau$ satisfying $\alpha(t) \in \mathscr{S}$ 
for all $t \in [0,\tau]$. Then $\rho_1 \circ \alpha_1(t) 
= \rho_2 \circ \alpha_2(t)$, and we are done.
Define the "lower" and "upper" boundary of $Q_{j,k}$ by 
\begin{align*}
	\partial^-Q_{j,k} &= \seq{s_1^j} \times [s_2^k,s_2^{k+1}] \cup [s_1^j,s_1^{j+1}] 
	\times \seq{s_2^k}, \\
	\partial^+Q_{j,k} &= \seq{s_1^{j+1}} \times [s_2^k,s_2^{k+1}] \cup [s_1^j,s_1^{j+1}] 
	\times \seq{s_2^{k+1}}.
\end{align*} 
\emph{Claim 1: } Let $\tilde s \in \partial^-Q_{j,k} \cap \mathscr{S}$. 
Then there exist a (continuous) path $\gamma:[0,1] 
\rightarrow \mathscr{S} \cap Q_{j,k}$ such that 
$\gamma(0) = \tilde s$ and $\gamma(1) \in \partial^+Q_{j,k} \cap \mathscr{S}$.
 
 Before proving the claim, let us see why the result follows. Let $\mathcal{D}$ 
 be any finite union of squares, i.e., $\mathcal{D} = \cup_{(j,k) \in \mathcal{I}}Q_{j,k}$, 
 where $\mathcal{I} \subset \seq{(j,k)\,:\, 0 \leq j \leq N_1, 0 \leq k\leq N_2}$. 
 Then, for such $\mathcal{D}$, let 
 the lower and upper boundary be defined by 
 \begin{displaymath}
 	\partial^\pm\mathcal{D} = \bigcup_{(j,k) \in \mathcal{I}} 
	\partial^\pm Q_{j,k} \setminus \bigcup_{(j,k) \in \mathcal{I}}\partial^\mp Q.
 \end{displaymath}
Proceeding by induction on the number of boxes we extend, upon 
concatenating paths, the result in Claim~1 to any such domain $\mathcal{D}$. 
But, then Claim~1 is valid for $\mathcal{D} = [0,\tau] \times [0,\tau]$.  
Consequently, as $(0,0) \in \partial^-\mathcal{D} \cap \mathscr{S}$ 
there exists a continuous path $\gamma$ satisfying $\gamma(0) = (0,0)$ and 
$\gamma(1) \in \partial^+\mathcal{D} \cap \mathscr{S} = ([0,\tau] 
\times \seq{\tau} \cup \seq{\tau} \times [0,\tau] ) \cap \mathscr{S}$. 
To finish the proof we apply the following observation with $\gamma(1) = s$ 
and $(\tau,\tau) = \tilde s$. \\
 \emph{Claim 2: }
 Suppose $s,\tilde s \in \mathscr{S}$. Then the point
  \begin{equation}\label{eq:MaxInS}
	s^* := (\max\seq{s_1,\tilde s_1},\max\seq{s_2,\tilde s_2}) \in \mathscr{S}.
  \end{equation}
 Furthermore, the straight line connecting $s$ and $s^*$ belongs to $\mathscr{S}$. 
 Similarly for $\tilde s$ and $s^*$. 
 
 It remains to prove Claim~1 and 2. \\
\emph{Proof of Claim~1.}
 Let $g_i(\xi) := \min\seq{r \geq \tilde s_i\,:\rho_i(r) \geq \xi}$ for $i = 1,2$. 
 Consider the two paths
 \begin{displaymath}
  \gamma_1(\lambda) = \left(\lambda,g_2(\rho_1(\lambda))\right), 
  \quad \lambda \in [\tilde s_1,s_1^{j+1}],
 \mbox{ and } 
  \gamma_2(\lambda) = \left(g_1(\rho_2(\lambda)),\lambda \right), 
  \quad \lambda \in [\tilde s_2,s_2^{k+1}].
 \end{displaymath}
 Note $\rho_i(g_i(\xi)) = \max\seq{\xi,\rho_i(\tilde s_i)}$, and so, 
 $\gamma_1:[\tilde s_1,s_1^{j+1}] \rightarrow \mathscr{S}$ and 
 $\gamma_2:[\tilde s_2,s_2^{k+1}] \rightarrow \mathscr{S}$. 
 Furthermore, $\gamma_1(\tilde s_1) 
 = \gamma_2(\tilde s_2) = \tilde s$. Suppose $(s_1^j,s_1^{j+1}] 
 \cap \Bp_{\rho_1} = \emptyset$. Then, by Lemma~\ref{lemma:BpBmProp}~(iii) $\rho_1$ 
 is constant on $[s_1^j,s_1^{j+1}]$. Hence, $\gamma_1(\lambda) = (\lambda,\tilde s_2)$, a 
 straight line connecting $\tilde s$ and $\partial^+Q_{j,k}$. 
 Similarly, if $(s_2^k,s_2^{k+1}] \cap \Bp_{\rho_2} = \emptyset$. To treat case (i), we 
 need the following observation.
 \begin{itemize}
  \item [(1)]$g_i$ is left continuous.
  \item [(2)]$g_i$ is right continuous at $\xi$, if for any $\bar \kappa > g_i(\xi)$, 
  $(g_i(\xi),\bar{\kappa}] \cap \Bp_{\rho_i} \neq \emptyset$.
 \end{itemize}
Suppose $[s_1^j, s_1^{j+1}] \subset \mathrm{cl}_+(\Bp_{\rho_1})$. 
It thus follow that $g_1$ is continuous as long as $g_1(\rho_2(\lambda)) \in [s_1^j,s_1^{j+1}]$. 
Consequently, $\gamma_2$ is a continuous path 
connecting $\tilde s$ and $\partial^+Q_{j,k}$. 
Similarly, if $[s_2^k, s_2^{k+1}] \subset \mathrm{cl}_+(\Bp_{\rho_2})$.

Let us prove (1) and (2). Let $\xi,\seq{\xi_n}_{n \geq 0} \subset [\rho_i(\tilde s_i),\rho_i(\tau)]$ 
and suppose $\xi_n \uparrow \xi$. Let $g_i(\xi) = \kappa$ and $\kappa_n = g_i(\xi_n)$. 
We want to show that $\kappa_n \uparrow \kappa$. Clearly, $\kappa_n \leq \kappa$ 
for all $n \geq 1$. For any $\varepsilon > 0$, there 
exists $n_0(\varepsilon)$ s.t. $\xi - \varepsilon \leq \xi_n$ 
for all $n \geq n_0(\varepsilon)$. Consequently, for all $n \geq n_0(\varepsilon)$,
 \begin{displaymath}
  \kappa_n = \min\seq{r \geq \tilde s_i\,:\rho_i(r) \geq \xi_n} 
	  \geq \min\underbrace{\seq{r \geq \tilde s_i\,:\rho_i(r) 
	  + \varepsilon \geq \rho_i(\kappa)}}_{V^\varepsilon}.
 \end{displaymath}
 Hence, if $\seq{\kappa_n}_{n \geq 0}$ has an accumulation point $\tilde \kappa < \kappa$, 
 then $\tilde \kappa \in V^\varepsilon$ for all $\varepsilon > 0$, i.e., 
 $\rho_i(\tilde \kappa) \geq \rho_i(\kappa)$, contradicting that $\kappa = g_i(\xi)$. 
 This finishes the proof of (1). Suppose $\xi_n \downarrow \xi$. 
 We want to show that $\kappa_n \downarrow \kappa$. Again, $\kappa_n \geq \kappa$. 
 For any $\varepsilon > 0$, there exists $n_0(\varepsilon)$ s.t. $\xi + \varepsilon \geq \xi_n$ 
 for all $n \geq n_0(\varepsilon)$. Hence
 \begin{displaymath}
  \kappa_n = \min\seq{r \geq \tilde s_i\,:\rho_i(r) \geq \xi_n} 
	  \leq \min\seq{r \geq \tilde s_i\,:\rho_i(r) \geq \rho_i(\kappa) + \varepsilon}.
 \end{displaymath}
 Suppose $\seq{\kappa_n}_{n \geq 0}$ has an accumulation 
 point $\bar \kappa > \kappa$. Then
 \begin{displaymath}
  k_0 := \inf\seq{r \geq \tilde s_i\,:\rho_i(r) > \rho_i(\kappa)} \geq \bar \kappa.
 \end{displaymath}
 But then $\rho_i(\kappa_0) = \rho_i(\kappa)$ and so $(\kappa,\kappa_0] 
 \cap \Bp_{\rho_i} = \emptyset$, a contradiction. This finishes the proof of (2).
 
\emph{Proof of Claim~2.} To this end, we might as    well assume $s_1 \leq \tilde s_1$ 
and $s_2 \geq \tilde s_2$, the other cases beeing either trivial, or 
analogous. As $\rho_2$ is nondecreasing
  \begin{displaymath}
	\rho_1(\tilde s_1) = \rho_2(\tilde s_2) \leq \rho_2(s_2) = \rho_1(s_1),
  \end{displaymath}
  but then, as $\rho_1$ is nondecreasing, $\rho_1(\tilde s_1) = \rho_1(s_1)$. 
  Similarly, $\rho_2(\tilde s_2) = \rho_2(s_2)$. Consequently \eqref{eq:MaxInS} follows. 
  Likewise, it is easily seen that the straight line 
  connecting $s$ and $s^*$ belongs to $\mathscr{S}$. 
  Similarly for $\tilde s$ and $s^*$.
\end{proof}

\begin{lemma}\label{lemma:FiniteOrm}
  Let $z \in C_0([0,\tau])$ and suppose $0 \le \min\seq{M_-,M_+} <
  \infty$. Let $\seq{\tau_n}_{n \ge 0}$ be defined in Definition~\ref{def:Orm}. 
  Then $\tau_N = 0$ for some $1 \le N <\infty$.
\end{lemma}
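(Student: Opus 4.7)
The plan is to proceed by contradiction, after establishing that $\tau_n$ is strictly decreasing and that from index $n=1$ onward it alternates strictly between $\Bp_z\setminus\seq{0}$ and $\Bm_z\setminus\seq{0}$ so long as $\tau_n>0$. Monotonicity follows from Lemma~\ref{lemma:BpBmProp}(i), which gives $\Bp_z\cap\Bm_z=\seq{0}$: in each of the three branches of the defining recursion the set over which the maximum is taken excludes $\tau_n$ whenever $\tau_n>0$, so $\tau_{n+1}<\tau_n$. Alternation uses Lemma~\ref{lemma:BpBmProp}(ii): $\max\seq{\Bp_z\cap[0,\tau_n]}$ itself lies in $\Bp_z$ (analogously for $\Bm_z$), and in the third branch any increasing sequence approaching the sup of $(\Bp_z\cup\Bm_z)\cap[0,\tau_0]$ has infinitely many terms in one of the two sets and its limit lies there as well. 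Being bounded below by $0$, the sequence $\seq{\tau_n}$ converges to some $t^*\in[0,\tau]$.

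Assume for contradiction that $\tau_n>0$ for all $n$. Since $z\in C_0([0,\tau])$ and $\rho_z^\pm$ are continuous (as running extrema of a continuous function coupled with a constant), $z(\tau_n)\to z(t^*)$ and $\rho_z^\pm(\tau_n)\to\rho_z^\pm(t^*)$. By Lemma~\ref{lemma:BpBmProp}(i), $\tau_n\in\Bp_z\setminus\seq{0}$ implies $z(\tau_n)=\rho_z^+(\tau_n)$, and analogously for $\Bm_z$. The alternation step then produces two infinite subsequences along which these two equalities propagate to the limit, yielding
\begin{equation*}
 z(t^*)=\rho_z^+(t^*)=\rho_z^-(t^*).
\end{equation*}

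To close the argument I invoke the hypothesis $\min\seq{M_-,M_+}<\infty$. Without loss of generality assume $M_+<\infty$; the other case is symmetric. Then \eqref{eq:TruncRunningMaxMin} gives $\rho_z^-(t^*)\le -(M_+)^{-1}<0$, while $\rho_z^+(t^*)\ge\max\seq{(M_-)^{-1},z(0)}\ge 0$, contradicting $\rho_z^+(t^*)=\rho_z^-(t^*)$. Hence the sequence must reach $0$ after finitely many steps.

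The main subtlety I anticipate is keeping track of the degenerate endpoints $M_-=0$ and $M_+=0$: in those cases the convention $0^{-1}=\infty$ forces $\rho_z^+\equiv\infty$ (so $\Bp_z=\seq{0}$) or $\rho_z^-\equiv-\infty$ (so $\Bm_z=\seq{0}$), and direct inspection of the three branches of the recursion shows termination after at most two steps, bypassing the limit argument. Outside these endpoints the contradiction scheme above is robust, and the only place where care is needed is verifying that each maximum in the recursion is actually attained, which is exactly what Lemma~\ref{lemma:BpBmProp}(ii) supplies.
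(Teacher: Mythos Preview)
Your proof is correct and follows essentially the same approach as the paper's: both argue by contradiction that the monotone sequence $\seq{\tau_n}$ converges, then use the alternation between $\Bp_z$ and $\Bm_z$ (via Lemma~\ref{lemma:BpBmProp}(i)--(ii)) together with the hypothesis $\min\seq{M_-,M_+}<\infty$ to show that $z(\tau_n)$ cannot converge. The only cosmetic difference is that the paper keeps the separation at the level of the subsequences ($z(\tau_{2n})\ge (M_-)^{-1}$, $z(\tau_{2n-1})\le -(M_+)^{-1}$) rather than passing to the limit $t^*$ and deriving $\rho_z^+(t^*)=\rho_z^-(t^*)$, but this is the same contradiction; your version is slightly more detailed in its treatment of the degenerate endpoints $M_\pm=0$.
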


\begin{proof}
  Note that $0 \le \tau_n \le \tau_{n-1} \le \tau$ for all $n \ge
  1$. And so, $\tau_n \downarrow \tau^*$ for some $\tau^* \in
  [0,\tau]$. Suppose $\tau_n > 0$ for all $n \ge 0$. Let $z_n :=
  z(\tau_n)$, $n \ge 0$. Consider the case $\tau_1 \in \Bm_z$. By
  Lemma~\ref{lemma:BpBmProp}~(i), $z_1 = \rho^-_z(\tau_1)$. Then $z_2 =
  \rho^+_z(\tau_2), z_3 = \rho^-_z(\tau_3), \dots$, and generally
  \begin{equation*}
    z_{2n} = \rho^+_z(\tau_{2n}) \quad \text{and} 
    \quad z_{2n-1} = \rho^-_z(\tau_{2n-1}), \qquad n\ge 1.
  \end{equation*}
  Consequently, $z_{2n} \ge (M_-)^{-1}$ and
  $z_{2n-1} \le -(M_+)^{-1}$ implying that $z(\tau_n) \nrightarrow
  z(\tau^*)$ as $n \rightarrow \infty$, contradicting the continuity of $z$. 
  The case $\tau_1 \in \Bp_z$ is analogous. 
  As a result, there exists $0 \le N < \infty$ suchthat $\tau_N = 0$.
\end{proof}

\begin{lemma}\label{lemma:InitApprox}
  Suppose \eqref{eq:Af} holds and $u_0 \in (L^1 \cap
  L^\infty)(\R)$ satisfies \eqref{eq:LowUpOnInitial}. Then there exist a family
  $\seq{\ue_0}_{\varepsilon > 0} \subset (L^1 \cap L^\infty \cap
  BV)(\R)$ such that $\ue_0 \rightarrow u_0$ in $L^1(\R)$ as
  $\varepsilon \downarrow 0$ and for $y<x$,
  \begin{equation}\label{eq:LipMollBound}
    -\underbrace{\min
      \seq{M_-,\frac{c}{\varepsilon}\norm{h(u_0)}_{\infty}}}_{M_-^\varepsilon}
    \le  \frac{f'(u_0^\varepsilon(y))-f'(u_0^\varepsilon(x))}{y-x} \le
    \underbrace{\min
      \seq{M_+,\frac{c}{\varepsilon}\norm{h(u_0)}_{\infty}}}_{M_+^\varepsilon}, 
  \end{equation}
  where $h(u) = \int_0^u f''(z)\,dz$ and $c > 0$ is a constant independent 
  of $f,u_0, \varepsilon$.
\end{lemma}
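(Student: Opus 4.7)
The plan is to rephrase the difference-quotient assumption on $u_0$ as a bilateral Lipschitz-type condition on the reparametrized quantity $v_0 := h(u_0) = f'(u_0) - f'(0)$, mollify $v_0$ instead of $u_0$, and then undo the change of variables. Since $f'' > 0$, the map $h : \R \to h(\R)$ is a $C^1$ increasing diffeomorphism with $h(0) = 0$, and \eqref{eq:LowUpOnInitial} rewrites as the two-sided Lipschitz bound
\[
-M_- \le \frac{v_0(y)-v_0(x)}{y-x} \le M_+, \qquad x<y.
\]
Moreover, $|h(u)| \le \bigl(\sup_{|s|\le\|u_0\|_\infty} f''(s)\bigr)|u|$, so $v_0 \in (L^1 \cap L^\infty)(\R)$ with $\|v_0\|_\infty \le \|h(u_0)\|_\infty$.

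Fix a standard nonnegative mollifier $\eta \in C^\infty_c(\R)$ with $\int \eta = 1$, set $\eta_\varepsilon(x) := \varepsilon^{-1}\eta(x/\varepsilon)$, and note $\|\eta_\varepsilon'\|_{L^1} = c/\varepsilon$ for some $c > 0$ depending only on $\eta$. Define
\[
v_0^\varepsilon := \eta_\varepsilon * v_0, \qquad u_0^\varepsilon := h^{-1}\!\left(v_0^\varepsilon\right).
\]
The $[-M_-, M_+]$ bound is inherited by $v_0^\varepsilon$ via the convex-combination identity
\[
\frac{v_0^\varepsilon(y)-v_0^\varepsilon(x)}{y-x} = \int \eta_\varepsilon(z)\,\frac{v_0(y-z)-v_0(x-z)}{y-x}\,dz,
\]
while differentiating the convolution produces $\|(v_0^\varepsilon)'\|_\infty \le \|\eta_\varepsilon'\|_{L^1}\|v_0\|_\infty \le c\varepsilon^{-1}\|h(u_0)\|_\infty$. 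Combining both bounds and using that $f'(u_0^\varepsilon) - f'(0) = v_0^\varepsilon$ by construction yields precisely \eqref{eq:LipMollBound}.

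It then remains to check the integrability/regularity of $u_0^\varepsilon$ and the $L^1$-convergence. Because $\eta_\varepsilon * v_0$ takes values in the convex hull of the essential range of $v_0$ (which is contained in $h(\R)$), and $h^{-1}$ is monotone, we get $\|u_0^\varepsilon\|_\infty \le \|u_0\|_\infty$. On the compact set $K := [-\|u_0\|_\infty, \|u_0\|_\infty]$, $h' = f''$ is continuous and strictly positive, hence bounded away from $0$ and from $\infty$; therefore $h^{-1}$ is globally Lipschitz on $h(K)$. This single fact yields all remaining claims at once: $u_0^\varepsilon \in L^1$ (since $v_0^\varepsilon \in L^1$), $u_0^\varepsilon \in BV$ (since $v_0^\varepsilon \in W^{1,1}(\R)$ and postcomposition with a Lipschitz function preserves $W^{1,1} \subset BV$), and $u_0^\varepsilon \to u_0$ in $L^1$ (since $v_0^\varepsilon \to v_0$ in $L^1$). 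The one delicate link in the chain is the uniform bilipschitz control of $h^{-1}$ on $h(K)$: this is where strict convexity of $f$ is used essentially, and it is what allows convergence on the $v_0$-side to be transferred back to the $u_0$-side. Everything else reduces to routine properties of standard mollifiers.
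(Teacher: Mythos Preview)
Your proof is correct and takes essentially the same approach as the paper: both define $u_0^\varepsilon := h^{-1}(\eta_\varepsilon * h(u_0))$, derive the two halves of \eqref{eq:LipMollBound} from (i) the convex-combination identity for the mollified difference quotient and (ii) the bound $\|\eta_\varepsilon'\|_{L^1}\|h(u_0)\|_\infty$, and transfer $L^p$/$BV$/convergence statements from $v_0^\varepsilon$ to $u_0^\varepsilon$ via the bilipschitz property of $h^{-1}$ on the relevant compact range. Your write-up is slightly more conceptual in framing the construction as a change of variables, but the argument is the same.
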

\begin{proof}
  Let $J_\varepsilon(x) = \varepsilon^{-1}J(\varepsilon^{-1}x)$, where $J
  \in C^\infty_c(\R)$ satsifies $\mathrm{supp}(J) \subset (-1,1), \, J
  \ge 0, \mbox{ and } \int J = 1$. Let $g$ be the inverse of $h$,
  i.e., $h(g(u)) = u$. Define
  \begin{equation*}
    u_0^\varepsilon(x) := g((h(u_0) \star J_\varepsilon)(x)).
  \end{equation*}
  As $u_0 \in L^\infty(\R)$, it follows by \eqref{eq:Af} that there 
  exist $0 < \alpha \le \beta < \infty$ such that $\alpha \le h'(u) \le \beta$ for 
  all $\abs{u} \leq \norm{u_0}_\infty$. Consequently, 
   $\beta^{-1} \le g'(v) \le \alpha^{-1}$, for all $v \in \seq{h(u)\,:\,\abs{u} \leq  \norm{u_0}_\infty}$. 
   Also, $g(0) = 0$. By Young's inequality for convolutions,
  \begin{equation*}
    \norm{\ue_0}_{L^p(\R)} \le \frac{1}{\alpha}\norm{h(u_0)
    \star J_\varepsilon}_{L^p(\R)}
    \le \frac{1}{\alpha}\norm{h(u_0)}_{L^p(\R)}
    \norm{J_\varepsilon}_{L^1(\R)}
    \le \frac{\beta}{\alpha}\norm{u_0}_{L^p(\R)},
  \end{equation*}
  for any $p\in [1, \infty]$. Similarly,
  \begin{align*}
    \norm{u_0^\varepsilon}_{BV} &\le \frac{1}{\alpha}\norm{h(u_0) 
    \star J_\varepsilon}_{BV} \\
    &\le \frac{1}{\alpha \varepsilon} \norm{h(u_0)}_{L^1(\R)}\norm{J'}_{L^1(\R)} \\
    &\le \frac{\beta}{\alpha \varepsilon}
    \norm{u_0}_{L^1(\R)}\norm{J'}_{L^1(\R)}.
  \end{align*}
  To prove \eqref{eq:LipMollBound} we
  observe that
  \begin{align*}
    \abs{f'(u_0^\varepsilon(y))-f'(u_0^\varepsilon(x))}
    &=  \abs{h(u_0^\varepsilon(y))-h(u_0^\varepsilon(x))} \\
    &= \abs{(h(u_0) \star J_\varepsilon)(y)-(h(u_0) \star J_\varepsilon)(x)} \\
    &= \abs{\int_\R h(u_0(\xi))\left(J_\varepsilon(y-\xi)-J_\varepsilon(x-\xi)\right)\,d\xi} \\
    &\le
    \frac{1}{\varepsilon}\norm{h(u_0)}_{L^\infty(\R)}\norm{J'}_{L^1(\R)}\abs{y-x}.
  \end{align*}
  On the other hand,
  \begin{equation*}
    \frac{f'(u_0^\varepsilon(y))-f'(u_0^\varepsilon(x))}{y-x} 
    = \int_\R \frac{f'(u_0(y-\xi))-f'(u_0(x-\xi))}{(y-\xi)-(x-\xi)}J_\varepsilon(\xi)\,d\xi.
  \end{equation*}
  Combining the two yields \eqref{eq:LipMollBound}. Finally, for any
  $1 \le p \le \infty$,
  \begin{equation*}
    \norm{u_0^\varepsilon - u_0}_{L^p(\R)} 
    \le \frac{1}{\alpha}\norm{h(u_0) \star J_\varepsilon - h(u_0)}_{L^p(\R)},
  \end{equation*}
  and so $u_0^\varepsilon \rightarrow u_0$ in $L^p(\R)$.
\end{proof}

\begin{proof}[Proof of Theorem \ref{thm:OrmEquiv}]
  Assume $u_0 \in BV(\R)$. We construct an approximation of the path $z$
  by supplementing the interpolation points
  $\seq{(\tau_m,z(\tau_m))}_{m = 0}^{N}$, given in Definition~\ref{def:Orm}, by points
  $\seq{(t_m,z(t_m))}_{m = 1}^\infty$, This results in a sequence of
  piecewise linear approximations to $z$, with $z_0 =
  \orm_{\tau,M_\pm}(z)$ and $z_n$ the piecewise linear
  interpolation based on the points $\seq{(\tau_m,z(\tau_m))}_{m = 0}^{N} \cup
  \seq{(t_m,z(t_m))}_{m = 1}^n$. We claim
  \begin{equation}\label{eq:SimApproxClaim}
  \begin{split}
    &\text{For all $n \ge 1$, $z_n \sim z_0$ on $[0,\tau]$ for all}
    \\ & \text{$u_0 \in (L^1 \cap L^\infty \cap BV)(\R)$ 
    satisfying \eqref{eq:LowUpOnInitial}}.
  \end{split}
  \end{equation}
  Before proving the claim, we verify that Theorem \ref{thm:OrmEquiv} follows. 
  As $z$ is uniformly continuous on $[0,\tau]$ we may pick the sequence
  $\seq{t_n}_{n = 0}^\infty$ such that $z_n \rightarrow z$ uniformly
  on $[0,\tau]$. Let $u^n$ be the solution to \eqref{eq:sscl} with
  path $z^n$ and initial condition $u_0$.  By \eqref{eq:stabilityPes}
  it follows that $u^n(\tau) \rightarrow u(\tau)$ in $L^1(\R)$. But by
  \eqref{eq:SimApproxClaim}, $u^n(\tau) = u^0(\tau)$ for all $n \ge
  0$. Hence $u^0(\tau) = u(\tau)$ which proves that the equivalence holds
  for all initial functions $u^0 \in (L^1 \cap L^\infty \cap BV)(\R)$.

  To see that it suffices with $u_0 \in (L^\infty \cap L^1)(\R)$ let
  $\seq{\ue_0}_{\varepsilon > 0}$ be the approximation of $u_0$
  obtained in Lemma~\ref{lemma:InitApprox}. Let $\ue$ denote the
  solution to \eqref{eq:sscl} with path $z$ and initial condition
  $\ue_0$, and $u^{\varepsilon,0}$ be the solution to \eqref{eq:sscl}
  with path $\orm_{\tau,M_\pm}(z)$ and initial condition $\ue_0$. Due
  to the above and \eqref{eq:LipMollBound}, 
  $\ue(\tau) = u^{\varepsilon,0}(\tau)$ for all $\varepsilon > 0$. By the
  continuous dependence estimate \eqref{eq:stabilityPes} and the
  triangle inequality,
  \begin{align*}
    \norm{u(\tau)-u^0(\tau)}_{L^1(\R)}
    &\le \norm{u(\tau)-\ue(\tau)}_{L^1(\R)} 
    + \norm{u^{\varepsilon,0}(\tau)-u^0(\tau)}_{L^1(\R)} \\
    &\le 2\norm{u_0-\ue_0}_{L^1(\R)},
  \end{align*}
  from which Theorem \ref{thm:OrmEquiv} follows. 
  
  It remains to prove the claim \eqref{eq:SimApproxClaim}.
  Suppose $z_{n-1} \sim z_0$, we must show that
  $z_n \sim z_0$. For some $1 \le m \le N$, we have $\tau_m \le t_n <
  \tau_{m-1}$. Note that $ z_n(\tau_{m}) = z_{n-1}(\tau_m) =
  z(\tau_m)$ and $z_n(\tau_{m-1}) = z_{n-1}(\tau_{m-1}) =
  z(\tau_{m-1})$, so $z_n(t) = z_{n-1}(t)$ for all $t \in
  [0,\tau_{m}]\cup[\tau_{m-1},\tau]$.

  Consider the case $\tau_{m} \in \Bm_z$. By
  Lemma~\ref{lemma:PiecewisePrecomEquiv} there exist piecewise linear
  monotone surjective functions $\alpha_i:[\tau_m,\tau_{m-1}]
  \rightarrow [\tau_m,\tau_{m-1}]$, $i = 1,2$, such 
  that $\rho_{z_n}^+ \circ \alpha_1 = \rho_{z_{n-1}}^+ \circ \alpha_2$ on
  $[\tau_m,\tau_{m-1}]$. We need to show that $\rho_{z_n}^- \circ \alpha_1 = \rho_{z_{n-1}}^- \circ
  \alpha_2$ on $[\tau_m,\tau_{m-1}]$. Recall that $\tau_m = \max\seq{\Bm_z \cap
  [0,\tau_{m-1}]}$, so that $(\tau_m,\tau_{m-1}] \cap \Bm_z = \emptyset$. By  	
  Lemma~\ref{lemma:BpBmProp}~(iii), $\rho_z^-$ is constant on $[\tau_m,\tau_{m-1}]$. 
  Hence, $z(t) \ge \rho_z^-(t) = \rho_z^-(\tau_m) =
  z(\tau_m)$ for $t \in (\tau_m,\tau_{m-1}]$. Accordingly, for any $t \in
  (\tau_m,\tau_{m-1}]$ and all $n \ge 0$, $z_n(t) \ge z(\tau_m) = z_n(\tau_m)$, so that
  \begin{displaymath}
   \rho_{z_n}^-(t) = \min \seq{-\frac{1}{M_+},\min_{0 \leq s \leq t}\seq{z_n(s)}} 
   = z(\tau_m), \qquad t \in [\tau_m,\tau_{m-1}].
  \end{displaymath}
  It follows that $\rho_{z_n}^- \circ \alpha_1 = \rho_{z_{n-1}}^- \circ
  \alpha_2$ on $[\tau_m,\tau_{m-1}]$. Hence, upon taking
  \begin{equation*}
    \tilde{\alpha}_i(t) 
    =
    \begin{cases}
      \alpha_i(t) &\text{for $t \in [\tau_m,\tau_{m-1}]$},\\
      t &\text{otherwise,}
    \end{cases}
  \end{equation*}
  we have $\rho_{z_n}^\pm \circ \tilde{\alpha}_1 = \rho_{z_{n-1}}^\pm \circ \tilde{\alpha}_2$ 
  on $[0,\tau]$. By Lemma~\ref{lemma:EquivalencePiecewise}, it follows that $z_n \sim
  z_{n-1} \sim z_0$. The case $\tau_{m} \in \Bp_z$ is treated similarly. 
\end{proof}

\begin{proof}[Proof of Theorem \ref{thm:OSB}]
  When $\orm_{\tau,u_0}(z)$ is well-defined, the result is an immediate
  consequence of Lemma~\ref{lemma:OSBC1Path} and
  Theorem~\ref{thm:OrmEquiv}. The general case follows by
  approximation. Let $\seq{\ue_0}_{\varepsilon > 0}$ be the
  approximation of $u_0$ obtained in Lemma~\ref{lemma:InitApprox} and
  $\ue$ be the entropy solution to \eqref{eq:sscl} with initial
  condition $\ue_0$. By the continuous dependence 
  estimate \eqref{eq:stabilityPes}, it follows that $\ue(t) \rightarrow
  u(t)$ in $L^1(\R)$ as $\varepsilon \downarrow 0$. Let $0\le \test \in
  C^\infty_c(\R^2)$ satisfy $\mathrm{supp}(\test) \subset
  \seq{(x,y)\,:\, y > x}$ and define
  \begin{equation*}
    \Phi[u,\varphi] := \iint_{\R \times \R}
    \frac{f'(u(\tilde{y}))-f'(u(\tilde{x}))}{\tilde{y}-\tilde{x}}\test(\tilde{x},\tilde{y})
    \,d\tilde{x} d\tilde{y}.
  \end{equation*}
  We have that $\Phi[\ue(t),\varphi] \rightarrow
  \Phi[u(t),\varphi]$ as $\varepsilon\to 0$. By
  Lemmas \ref{lemma:OSBC1Path}, \ref{lemma:FiniteOrm}, and \ref{lemma:InitApprox}, 
  and Theorem~\ref{thm:OrmEquiv}, we conclude that
  \begin{equation*}
    -\frac{1}{\rho^+_z(t)-z(t)}\norm{\test}_{L^1(\R \times \R)} \le
    \Phi[\ue(t),\varphi] \le
    \frac{1}{z(t)-\rho^-_z(t)}\norm{\test}_{L^1(\R \times \R)}, 
  \end{equation*}
  and so the same holds for $u$. Let $J^i_\delta(x) =
  \delta^{-1}J^i(\delta^{-1}x)$, where $J^i \in C^\infty_c(\R)$
  satisfy $J^i \ge 0$  and $\int J^i = 1$, $i =1,2$. 
  We also demand that  $\mathrm{supp}(J^1) \subset (0,1)$ and
  $\mathrm{supp}(J^2) \subset (-1,0)$. Fix $y > x$ and let $\test(\tilde{x},\tilde{y}) =
  J^1_{\delta_1}(x-\tilde{x})J^2_{\delta_2}(y-\tilde{y})$. 
  Sending $\delta_1,\delta_2 \downarrow 0$
  yields the left (essential) limit at $x$ and the right (essential)
  limit at $y$ in Theorem~\ref{thm:OSB}. The general statement follows
  by modifying the definition of $\test$.
\end{proof}

Next, we want to apply Theorem~\ref{thm:OrmEquiv} to 
prove Theorem~\ref{thm:GeneralEquiv}. 
An important step in this direction is the following observation, which is also of 
importance for Theorem~\ref{thm:EqDist}.
\begin{lemma}\label{lemma:OrmCompEquiv}
Let $z \in C_0([0,\tau])$ be a path for which $\orm_{\tau,M_\pm}(z)$ 
is well defined and fix a map $\alpha \in \mathscr{A}_\tau$. 
Denote by $\alpha^{-1}$ the generalized 
inverse of $\alpha$, cf.~\eqref{eq:gen-inv}. 
Then $\mathcal{T}_{z \circ \alpha}^\pm 
= \seq{\alpha^{-1}(t) \,:\, t \in \mathcal{T}_z^\pm}$, cf.~\eqref{eq:TpmDef}.
It follows that $\mathcal{T}_{z \circ \alpha} 
= \seq{\alpha^{-1}(t)\,:t \in \mathcal{T}_z} \cup \seq{\tau}$. 
Furthermore, $\orm_{\tau,M_\pm}(z \circ \alpha) \sim  \orm_{\tau,M_\pm}(z)$ 
for any $u_0$ satisfying \eqref{eq:LowUpOnInitial}.
\end{lemma}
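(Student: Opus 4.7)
The plan is to transfer everything from $z$ to $z\circ\alpha$ through the relation $\rho^\pm_{z\circ\alpha}=\rho^\pm_z\circ\alpha$, which is immediate from \eqref{eq:TruncRunningMaxMin} since $\alpha$ is nondecreasing with $\alpha(0)=0$ and $\alpha(\tau)=\tau$. Note that any $\alpha\in\mathscr{A}_\tau$ is automatically continuous (a nondecreasing surjection onto an interval has no jumps), so the generalized inverse satisfies $\alpha(\alpha^{-1}(t))=t$ and $\alpha(s)<t$ whenever $s<\alpha^{-1}(t)$, while $\alpha^{-1}$ is itself nondecreasing.

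First I would establish the correspondence $\Bpm_z\setminus\{0\}\leftrightarrow\Bpm_{z\circ\alpha}\setminus\{0\}$ given by $t\mapsto\alpha^{-1}(t)$ and $\hat t\mapsto\alpha(\hat t)$. If $t\in\Bp_z$ with $t>0$, then for every $s<\alpha^{-1}(t)$ one has $\rho^+_{z\circ\alpha}(s)=\rho^+_z(\alpha(s))<\rho^+_z(t)=\rho^+_{z\circ\alpha}(\alpha^{-1}(t))$, so $\alpha^{-1}(t)\in\Bp_{z\circ\alpha}$. Conversely, if $\hat t\in\Bp_{z\circ\alpha}\setminus\{0\}$, any $r<\alpha(\hat t)$ with $\rho^+_z(r)\geq\rho^+_z(\alpha(\hat t))$ would, via $\hat r\in[0,\hat t)$ chosen by continuity of $\alpha$ so that $\alpha(\hat r)=r$, contradict $\hat t\in\Bp_{z\circ\alpha}$, and a similar argument shows $\alpha^{-1}(\alpha(\hat t))=\hat t$. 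The statements for $\Bm$ are identical, and the bijection is strictly order-preserving.

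Next I would verify $\mathcal{T}_{z\circ\alpha}^\pm=\{\alpha^{-1}(t):t\in\mathcal{T}_z^\pm\}$ and $\mathcal{T}_{z\circ\alpha}=\{\alpha^{-1}(t):t\in\mathcal{T}_z\}\cup\{\tau\}$ by induction through Definition~\ref{def:Orm}. The subtle point is the first step $\tau_0^{z\circ\alpha}=\tau$: when $\alpha^{-1}(\tau)<\tau$, monotonicity and continuity force $\alpha\equiv\tau$ on $[\alpha^{-1}(\tau),\tau]$, so $\rho^\pm_{z\circ\alpha}$ are both constant there, giving $(\alpha^{-1}(\tau),\tau]\cap(\Bp_{z\circ\alpha}\cup\Bm_{z\circ\alpha})=\emptyset$ and $\tau\notin\Bp_{z\circ\alpha}\cup\Bm_{z\circ\alpha}$. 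The recursion therefore produces $\tau_1^{z\circ\alpha}=\alpha^{-1}(\tau)=\alpha^{-1}(\tau_0^z)$, and the extra $\{\tau\}$ in the claim accounts for this initial shift. When $\alpha^{-1}(\tau)=\tau$ the sequences match with no shift. From there, the inductive step $\tau_{n+1}^{z\circ\alpha}=\alpha^{-1}(\tau_n^z)$ follows because the max operation used in Definition~\ref{def:Orm} commutes with $\alpha^{-1}$: by the strictly order-preserving bijection of Step~2, $\alpha^{-1}\bigl(\max(\Bp_z\cap[0,\tau_n^z])\bigr)=\max(\Bp_{z\circ\alpha}\cap[0,\alpha^{-1}(\tau_n^z)])$, and analogously for $\Bm$ and $\Bp\cup\Bm$. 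The classification of each $\tau_n^{z\circ\alpha}$ into $\Bpm_{z\circ\alpha}$ is inherited from $\tau_n^z$ by Step~2, yielding $\mathcal{T}_{z\circ\alpha}^\pm$.

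Finally, for $\orm_{\tau,M_\pm}(z\circ\alpha)\sim\orm_{\tau,M_\pm}(z)$, I would build a piecewise linear, nondecreasing, surjective time-change $\gamma\colon[0,\tau]\to[0,\tau]$ by prescribing $\gamma(\alpha^{-1}(\tau_n^z))=\tau_n^z$ and $\gamma(\tau)=\tau$, linearly interpolated in between. Since $z\circ\alpha(\alpha^{-1}(\tau_n^z))=z(\tau_n^z)$ and both paths are piecewise linear with matched breakpoint values, one checks directly $\orm_{\tau,M_\pm}(z\circ\alpha)=\orm_{\tau,M_\pm}(z)\circ\gamma$ (the extra segment $[\alpha^{-1}(\tau),\tau]$, if present, maps under $\gamma$ to $\{\tau\}$, matching the constant segment at the end of $\orm_{\tau,M_\pm}(z\circ\alpha)$). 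Corollary~\ref{cor:PiecewiseEquiv}~(2) then gives the equivalence for $u_0\in(L^\infty\cap L^1\cap BV)(\R)$; the extension to general $u_0\in(L^\infty\cap L^1)(\R)$ satisfying \eqref{eq:LowUpOnInitial} is obtained exactly as in the closing argument of the proof of Theorem~\ref{thm:OrmEquiv}, via the mollification of Lemma~\ref{lemma:InitApprox} and the stability estimate \eqref{eq:stabilityPes}. The main obstacle is the bookkeeping in Step~3 around $t=\tau$, where the recursions may be shifted by one index depending on whether $\alpha$ is eventually constant.
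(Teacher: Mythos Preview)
Your approach is essentially the paper's: both establish $\rho^\pm_{z\circ\alpha}=\rho^\pm_z\circ\alpha$, relate $\Bpm_{z\circ\alpha}$ to $\Bpm_z$ through $\alpha^{-1}$ (the paper packages this as Lemma~\ref{lemma:BpBmComp}; you argue the bijection directly), derive the interpolation points of $\orm_{\tau,M_\pm}(z\circ\alpha)$ inductively, and conclude by exhibiting a piecewise linear reparametrization and invoking Corollary~\ref{cor:PiecewiseEquiv}(2).

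There is, however, a gap in your Step~3. You assert that whenever $\alpha^{-1}(\tau)<\tau$ the recursion gives $\tau_1^{z\circ\alpha}=\alpha^{-1}(\tau)$, with a one-step shift thereafter. But $\tau_1^{z\circ\alpha}=\max\bigl((\Bp_{z\circ\alpha}\cup\Bm_{z\circ\alpha})\cap[0,\tau]\bigr)$ equals $\alpha^{-1}(\tau)$ only if $\alpha^{-1}(\tau)\in\Bp_{z\circ\alpha}\cup\Bm_{z\circ\alpha}$, which by your own bijection is equivalent to $\tau\in\Bp_z\cup\Bm_z$. When $\tau\notin\Bp_z\cup\Bm_z$ there is \emph{no} shift: one obtains $\zeta_n=\alpha^{-1}(\tau_n)$ for all $n\ge1$ directly, and $\alpha^{-1}(\tau)$ is simply not a member of $\mathcal{T}_{z\circ\alpha}$ (so the second set identity in the lemma statement is slightly loose as a strict equality). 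The paper's proof writes out only the case $\tau\notin\Bp_{z\circ\alpha}\cup\Bm_{z\circ\alpha}$ and asserts $\zeta_n=\alpha^{-1}(\tau_n)$; your version effectively commits to the complementary case. The repair is a short two-case split on whether $\tau\in\Bp_z\cup\Bm_z$; either branch yields the key identity $\mathcal{T}^\pm_{z\circ\alpha}=\{\alpha^{-1}(t):t\in\mathcal{T}^\pm_z\}$, which is what is actually used downstream. Your closing mollification step to lift the $BV$ hypothesis on $u_0$ is a sensible addition not spelled out in the paper's proof of this lemma.
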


To prove this statement, we need a more technical result.
\begin{lemma}\label{lemma:BpBmComp}
  Let $z \in C_0([0,\tau])$, and fix a nondecreasing 
  surjective map $\alpha:[0,\tau] \rightarrow [0,\tau]$, with 
  generalized inverse $\alpha^{-1}$ \eqref{eq:gen-inv}.
  Then
  \begin{itemize}
  \item[(i)]$\Bpm_{z \circ \alpha} = \alpha^{-1}(\Bpm_z) \cap
    \mathscr{B}_\alpha$, $\mathscr{B}_\alpha := \seq{t \in
      [0,\tau] \,:\, \min\seq{s \,:\,\alpha(s) \ge \alpha(t)} = t}$.
  \item[(ii)] For a given set $S \subset [0,\tau]$ satisfying $\sup
    \seq{S} \in S$
    \begin{equation*}
      \max \seq{\alpha^{-1}(S) \cap \mathscr{B}_\alpha} = \alpha^{-1}(\max\seq{S}).
    \end{equation*}
  \item[(iii)] If $\alpha(\zeta^*) = \tau^*$ for some $0 \le
    \zeta^*,\tau^* \le \tau$ then
    \begin{equation*}
      [0,\zeta^*] \cap \mathscr{B}_\alpha = \alpha^{-1}([0,\tau^*]) \cap \mathscr{B}_\alpha.
    \end{equation*}
  \end{itemize}
\end{lemma}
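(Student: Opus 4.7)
The plan proceeds in three steps, and begins by exploiting the fact that any $\alpha \in \mathscr{A}_\tau$ (nondecreasing and surjective on the compact interval $[0,\tau]$) is automatically continuous with $\alpha(0)=0$ and $\alpha(\tau)=\tau$; otherwise a jump discontinuity would leave a gap in the range. Hence $\alpha([0,t]) = [0,\alpha(t)]$ for every $t$, from which one extracts the pointwise identity $\rho^\pm_{z \circ \alpha}(t) = \rho^\pm_z(\alpha(t))$. This reduces everything to an order-theoretic manipulation. I will use $\alpha^{-1}$ in two distinct ways—set preimage in the statements, and generalized inverse \eqref{eq:gen-inv} in the arguments—and keep in mind that the generalized inverse always lands in $\mathscr{B}_\alpha$ and satisfies $\alpha(\alpha^{-1}(\sigma))=\sigma$ thanks to continuity.

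For (i) I would prove both inclusions. Take $t \in \Bp_{z \circ \alpha}$. If some $t' < t$ had $\alpha(t')=\alpha(t)$, the identity $\rho^+_{z\circ\alpha}(t')=\rho^+_z(\alpha(t))=\rho^+_{z\circ\alpha}(t)$ would violate the defining infimum at $t$, so $t \in \mathscr{B}_\alpha$. If instead $\alpha(t) \notin \Bp_z$, pick $\hat\sigma < \alpha(t)$ with $\rho^+_z(\hat\sigma) \ge \rho^+_z(\alpha(t))$ and set $\hat t = \alpha^{-1}(\hat\sigma)$ (generalized inverse); then $\hat t < t$ and $\rho^+_{z\circ\alpha}(\hat t) \ge \rho^+_{z\circ\alpha}(t)$, a contradiction. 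Conversely, if $t \in \mathscr{B}_\alpha$ with $\alpha(t) \in \Bp_z$, then $\rho^+_{z\circ\alpha}(s)\ge\rho^+_{z\circ\alpha}(t)$ gives $\rho^+_z(\alpha(s)) \ge \rho^+_z(\alpha(t))$, whence $\alpha(s) \ge \alpha(t)$ by the $\Bp_z$-condition and then $s \ge t$ by the $\mathscr{B}_\alpha$-condition. The argument for $\Bm$ is identical.

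For (ii), set $\sigma^* := \max S$ and $t^* := \alpha^{-1}(\sigma^*)$ (generalized inverse). Continuity yields $\alpha(t^*)=\sigma^* \in S$ and $t^* \in \mathscr{B}_\alpha$, so $t^*$ is a candidate. Any other $t \in \alpha^{-1}(S) \cap \mathscr{B}_\alpha$ satisfies $\alpha(t) \le \sigma^*$; strict inequality forces $t < t^*$ by monotonicity, while equality together with $t \in \mathscr{B}_\alpha$ forces $t = t^*$. Part (iii) is by the same dichotomy: for $t \in \mathscr{B}_\alpha$ with $\alpha(t) \le \tau^* = \alpha(\zeta^*)$, either $\alpha(t) < \tau^*$ giving $t < \zeta^*$ by monotonicity, or $\alpha(t) = \tau^*$ giving $t = \min\{s : \alpha(s)=\tau^*\} \le \zeta^*$; the reverse inclusion is immediate.

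No genuine obstacle is anticipated: the only real subtlety is the dual role of the symbol $\alpha^{-1}$ and the need to invoke continuity of $\alpha$ (via surjectivity on a compact interval) to justify $\rho^\pm_{z\circ\alpha} = \rho^\pm_z \circ \alpha$, without which the $\mathscr{B}^\pm$ sets would not transform cleanly under composition.
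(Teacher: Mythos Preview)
Your proposal is correct and follows essentially the same approach as the paper: both establish the key identity $\rho^\pm_{z\circ\alpha}=\rho^\pm_z\circ\alpha$ from monotonicity of $\alpha$, then verify each inclusion directly from the definitions of $\Bpm$ and $\mathscr{B}_\alpha$ using the same contrapositive and order-theoretic manipulations. Your treatment is in fact slightly more explicit about continuity of $\alpha$ and the dual reading of $\alpha^{-1}$ (set preimage versus generalized inverse), but once intersected with $\mathscr{B}_\alpha$ the two readings coincide, so no ambiguity arises.
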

\begin{proof}
(i). Consider $\Bp_{z \circ \alpha}$, with the proof for $\Bm_{z \circ \alpha}$ being
  analogous. Since $\alpha$ is nondecreasing,
  \begin{equation*}
    \rho^+_{z \circ \alpha}(t) = \max \seq{\frac{1}{M_-}, 
    \max_{0 \le s \le \alpha(t)}\seq{z(s)}} = \rho^+_z(\alpha(t)).
  \end{equation*}
  Hence,
  \begin{equation}
    \Bp_{z \circ \alpha} = \seq{t \in [0,\tau]\,:\, \inf\seq{s\,:\,
        \rho_z^+(\alpha(s))\ge \rho_z^+(\alpha(t))} = t}. 
  \end{equation}
  Assume $t \in \alpha^{-1}(\Bp_{z}) \cap \mathscr{B}_\alpha$ and 
  $\rho_z^+(\alpha(s))\ge \rho_z^+(\alpha(t))$. Then, as
  $\alpha(t) \in \Bp_z$, $\alpha(s) \ge \alpha(t)$. Since $t \in
  \mathscr{B}_\alpha$ it follows that $t \leq s$. Consequently, 
  $t \in \Bp_{z \circ \alpha}$. Conversely, if 
  $t \notin \mathscr{B}_\alpha$, then there 
  exists $s < t$ such that $\alpha(s) = \alpha(t)$, 
  implying that $t \notin \Bp_{z \circ
  \alpha}$. Similarly, if $\alpha(t) \notin \Bp_z$ there exists 
  $b =\alpha(s) < \alpha(t)$ such that $\rho_z^+(\alpha(s)) \ge
  \rho_z^+(\alpha(t))$. Now, as $\alpha(s) < \alpha(t)$ it follows
  that $s < t$ and so, $t \notin \Bp_{z \circ \alpha}$, thereby proving (i).  

(ii).  First note that $\alpha^{-1}(\max \seq{S}) \in
  \alpha^{-1}(S)\cap \mathscr{B}_\alpha$, which proves
  $(\ge)$. Conversely, suppose $t \in \alpha^{-1}(S) \cap
  \mathscr{B}_\alpha$. Then
  \begin{align*}
    t &= \min\seq{s \in [0,\tau]\,:\, \alpha(s) \ge \alpha(t)} \\
    &  \le \min\seq{s \in [0,\tau]\,:\, \alpha(s) \ge \max \seq{S}} \\
    &= \alpha^{-1}(\max\seq{S}),
  \end{align*}
  which proves $(\le)$. 

(iii). Suppose $0 \le t \le \zeta^*$. By
  monotonicity, $t \in \alpha^{-1}([0,\tau^*])$, which proves
  $(\subset)$. Conversely, assume $t \in \alpha^{-1}([0,\tau^*]) \cap
  \mathscr{B}_\alpha$. Then
  \begin{equation*}
    t = \min\seq{s \,:\, \alpha(s) \ge \alpha(t)} \le \min\seq{s 
    \,:\, \alpha(s) \ge \tau^*} 
    = \alpha^{-1}(\tau^*) \le \zeta^*,
  \end{equation*}
  implying $(\supset)$.
\end{proof}

\begin{proof}[Proof of Lemma~\ref{lemma:OrmCompEquiv}]
  Let $\seq{\zeta_n}_{n \ge 1}^M$ be the interpolation points of
  $\orm_{\tau,M_\pm}(z \circ \alpha)$. By definition, $\zeta_0 =
  \tau$. Suppose $\tau \notin \Bp_{z \circ \alpha} \cup \Bm_{z \circ \alpha}$. 
  Applying Lemma~\ref{lemma:BpBmComp}, 
  \begin{align*}
    \zeta_1 &= \max\seq{(\Bp_{z \circ \alpha} \cup \Bm_{z \circ \alpha}) \cap [0,\tau]} \\
    &= \max\seq{\alpha^{-1}((\Bp_z \cup \Bm_z) \cap [0,\tau]) \cap \mathscr{B}_\alpha} \\
    &= \alpha^{-1}(\tau_1).
  \end{align*}
  Suppose $\zeta_n = \alpha^{-1}(\tau_n)$. If $\tau_n \in \Bpm_z$,
  then by Lemma~\ref{lemma:BpBmComp}~(i), $\zeta_n \in \Bpm_{z \circ \alpha}$. 
  By (i), (ii), and (iii),
  \begin{equation*}
   \zeta_{n+1} = \max \seq{\Bpm_{z \circ \alpha} \cap [0,\zeta_n]} 
	       = \max \seq{\alpha^{-1}(\Bpm_{z}\cap [0,\tau_n]) \cap \mathscr{B}_\alpha } 
	       = \alpha^{-1}(\tau_{n+1}).
  \end{equation*}
  Consequently $\zeta_n = \alpha^{-1}(\tau_n)$ for all $1 \le n \le N$. 
  As $\zeta_N = 0$ and $\alpha^{-1}(0) = 0$ it follows that $\zeta_N = 0$, and so $M = N$. 
  Consider the last statement. Let us denote 
  by $\mathscr{L}(\seq{(t_n,z_n)_{n = 0}^N})$ the piecewise 
  linear interpolation of the points $\seq{(t_n,z_n)_{n = 0}^N}$. 
  Set $\psi = \mathscr{L}(\seq{\tau_n,\zeta_n}_{n=0}^N)$. 
  By the above, $\alpha(\zeta_n) = \tau_n$ and so
  \begin{align*}
    \orm_{\tau,u_0}(z \circ \alpha)\circ\psi 
	  = \mathscr{L}\left(\seq{\zeta_n,z \circ \alpha(\zeta_n)}\right) \circ \psi 
	  &= \mathscr{L}\left(\seq{\tau_n,z \circ \alpha(\zeta_n)}\right) \\
	  &= \mathscr{L}\left(\seq{\tau_n,z(\tau_n)}\right) 
	  = \orm_{\tau,u_0}(z).
  \end{align*}
  As a consequence, cf.~Corollary~\ref{cor:PiecewiseEquiv}, it follows that
  $\orm_{\tau,u_0}(z \circ \alpha) \sim \orm_{\tau,u_0}(z)$. 
\end{proof}

\begin{proof}[Proof of Theorem \ref{thm:GeneralEquiv}]
  Let $\seq{\ue_0}_{\varepsilon > 0}$ be the approximation of $u_0$
  from Lemma~\ref{lemma:InitApprox}. Let us show that $z_1 \sim z_2$
  on $[0,\tau]$ with initial condition $\ue_0$. Note first that
  $\rho_{z_1,\varepsilon}^\pm \circ \alpha_1 =
  \rho_{z_2,\varepsilon}^\pm \circ \alpha_2$, where 
  $\rho_{z_1,\varepsilon}^\pm,\rho_{z_2,\varepsilon}^\pm$ are defined
  with respect to $M_-^\varepsilon,M_+^\varepsilon$, cf.~Lemma~\ref{lemma:InitApprox}. 
  By Theorem~\ref{thm:OrmEquiv} it
  suffices to prove $\orm_{\tau,\ue_0}(z_1) \sim
  \orm_{\tau,\ue_0}(z_2)$. However, by Lemma~\ref{lemma:OrmCompEquiv},
  we have
  \begin{equation*}
    \orm_{\tau,\ue_0}(z_1) \sim \orm_{\tau,\ue_0}(z_1 \circ \alpha_1)
    = \orm_{\tau,\ue_0}(z_2 \circ \alpha_2) \sim
    \orm_{\tau,\ue_0}(z_2). 
  \end{equation*}
  Let $\ue_i$ be the entropy solution to \eqref{eq:sscl} on
  $[0,\tau]$ with initial condition $\ue_0$ and path $z_i$, $i=1,2$. 
  By the above, $\ue_1(\tau) = \ue_2(\tau)$. Applying
  \eqref{eq:stabilityPes} and the triangle inequality it follows that
  \begin{align*}
    \norm{u_1(\tau)-u_2(\tau)}_{L^1(\R)}
    &\le \norm{u_1(\tau)-\ue_1(\tau)}_{L^1(\R)} + \norm{\ue_2(\tau)-u_2(\tau)}_{L^1(\R)} \\
    &\le 2\norm{\ue_0(\tau)-u_0(\tau)}_{L^1(\R)} \rightarrow 0,
  \end{align*}
  as $\varepsilon \downarrow 0$. Hence, $u_1(\tau) = u_2(\tau)$.
\end{proof}
Next, we turn to the proof of Theorem~\ref{thm:EqDist}. 
We first make some observations regarding the 
properties of the map $\Phi$ defined in Theorem~\ref{thm:EqDist}.

 \begin{lemma}\label{lemma:PhiProp}
  Let $\Phi$ be defined in Theorem~\ref{thm:EqDist} 
  and $z_1,z_2 \in C_0([0,\tau])$ be paths such 
  that $\orm_{\tau,M_\pm^i}(z_i)$, $i=1,2$, are well defined. 
  Then the following statements are true:
  \begin{itemize}
     \item[(i)] The map $\Phi$ is symmetric in the sense that 
     \begin{displaymath}
      \Phi[z_1,z_2](\alpha,\beta) = \Phi[z_2,z_1](\beta,\alpha), 
      \qquad \alpha,\beta \in \mathscr{A}_\tau.
     \end{displaymath}
     \item[(ii)] The map $\alpha \mapsto \Phi[z_1,z_2](\alpha,\beta)$, defined 
     for $\alpha,\beta \in \mathscr{A}_\tau$, is continuous with 
     respect to ``uniform convergence from above", i.e., 
     if $\seq{\alpha_j}_{j \geq 0} \subset \mathscr{A}_\tau$ 
     satisfies $\alpha_j \downarrow \alpha$ uniformly 
     on $[0,\tau]$ as $j \rightarrow \infty$, then
      \begin{displaymath}
	\Phi[z_1,z_2](\alpha_j,\beta) \rightarrow 
	\Phi[z_1,z_2](\alpha,\beta) \mbox{ as $j \rightarrow \infty$.}
      \end{displaymath}
     \item[(iii)] Suppose $\tilde{z}_1 \in C_0([0,\tau])$ satisfies 
     $\rho_{z_1}^\pm \circ \alpha_1 = \rho_{\tilde{z}_1}^\pm \circ \beta_1$ 
     for some $\alpha_1,\beta_1 \in \mathscr{A}_\tau$ and $\tilde{z}_1(\tau) = z_1(\tau)$. 
     Then
     \begin{displaymath}
     \Phi[z_1,z_2](\alpha_1 \circ \zeta,\beta) = \Phi[\tilde{z}_1,z_2](\beta_1 \circ \zeta,\beta), 
     \qquad \zeta,\beta \in \mathscr{A}_\tau.
     \end{displaymath}
  \end{itemize}

 \end{lemma}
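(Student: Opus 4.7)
For part (i), every ingredient in $\Phi[z_1,z_2](\alpha,\beta)$ is manifestly invariant under the simultaneous swap $(z_1,\alpha)\leftrightarrow(z_2,\beta)$: the absolute values $\abs{\rho_{z_1}^\pm\circ\alpha(t)-\rho_{z_2}^\pm\circ\beta(t)}$ are symmetric in their arguments, the indexing sets $(\mathcal{T}_{z_1\circ\alpha}^\pm \cup \mathcal{T}_{z_2\circ\beta}^\pm)\cap(\kappa^\pm,\tau]$ are unions whose two members get relabeled, the cutoffs $\kappa^\pm$ are defined symmetrically in the two paths, and the boundary term $\abs{z_1(\tau)-z_2(\tau)}$ is symmetric. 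For part (iii), I set $\hat\alpha:=\alpha_1\circ\zeta$ and $\hat\beta:=\beta_1\circ\zeta$ and right-compose the hypothesis with $\zeta$ to obtain $\rho_{z_1}^\pm\circ\hat\alpha=\rho_{\tilde z_1}^\pm\circ\hat\beta$. Using the identity $\rho^\pm_{z\circ\gamma}=\rho_z^\pm\circ\gamma$ (observed in the proof of Lemma~\ref{lemma:BpBmComp}) together with Lemma~\ref{lemma:BpBmProp}(iv), $\Bpm$ depends only on $\rho^\pm$, so $\Bpm_{z_1\circ\hat\alpha}=\Bpm_{\tilde z_1\circ\hat\beta}$. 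The interpolation points of Definition~\ref{def:Orm} are constructed recursively from the $\Bpm$-sets alone, hence $\mathcal{T}^\pm_{z_1\circ\hat\alpha}=\mathcal{T}^\pm_{\tilde z_1\circ\hat\beta}$. The thresholds $\kappa^\pm$ depend only on the (equal) functions $\rho^\pm_{z_1}\circ\hat\alpha$ and $\rho^\pm_{z_2}\circ\beta$, and $z_1(\tau)=\tilde z_1(\tau)$ by hypothesis; every term defining the two $\Phi$-values therefore matches.

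For part (ii), my strategy rests on two observations. First, uniform convergence $\alpha_j\to\alpha$ combined with continuity of $\rho_{z_i}^\pm$ yields uniform convergence $\rho_{z_1}^\pm\circ\alpha_j\to\rho_{z_1}^\pm\circ\alpha$ on $[0,\tau]$, which in turn forces $\kappa_j^\pm\uparrow\kappa^\pm$ because the sublevel sets defining $\kappa^\pm$ shrink monotonically in $j$. Second, any element of $\mathscr{A}_\tau$ is automatically continuous (a surjective nondecreasing self-map of $[0,\tau]$ cannot jump), so $\alpha_j(\alpha_j^{-1}(s))=s$; combined with $\alpha_j\to\alpha$ uniform and $\alpha$ continuous, this forces $\alpha_j^{-1}(s)\uparrow\alpha^{-1}(s)$ pointwise. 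Lemma~\ref{lemma:OrmCompEquiv} identifies $\mathcal{T}^\pm_{z_1\circ\alpha_j}=\seq{\alpha_j^{-1}(s)\,;\,s\in\mathcal{T}^\pm_{z_1}}$, and at $t_j=\alpha_j^{-1}(s)$ the simplification $\rho_{z_1}^\pm\circ\alpha_j(t_j)=\rho_{z_1}^\pm(s)$ is independent of $j$. Each of the finitely many terms whose maximum defines $\Phi[z_1,z_2](\alpha_j,\beta)$ then converges to the corresponding term for $\Phi[z_1,z_2](\alpha,\beta)$, and taking the max closes the argument.

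The hard part will be the cutoff bookkeeping in (ii): a point of $\mathcal{T}^\pm_{z_1\circ\alpha_j}$ may stay strictly above $\kappa^\pm_j$ for every $j$ while its limit coincides with $\kappa^\pm$, so that the term is included in every $\Phi[z_1,z_2](\alpha_j,\beta)$ but excluded from the limit. I plan to resolve this by noting that at such a borderline point, continuity of $\rho^\pm\circ\alpha$ and the very definition of $\kappa^\pm$ pin the contribution $\abs{\rho_{z_1}^\pm(s)-\rho_{z_2}^\pm\circ\beta(\alpha_j^{-1}(s))}$ below a quantity already realized by terms that do survive in the limiting maximum, so discarding the borderline term leaves the limit unchanged. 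A short case analysis on the three possibilities $\alpha^{-1}(s)>\kappa^\pm$, $\alpha^{-1}(s)=\kappa^\pm$, and $\alpha^{-1}(s)<\kappa^\pm$ completes the proof.
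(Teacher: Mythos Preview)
Your approach matches the paper's in all three parts. For (i) both you and the paper simply observe symmetry of the definition; for (iii) both arguments reduce to showing $\mathcal{T}^\pm_{z_1\circ\alpha_1\circ\zeta}=\mathcal{T}^\pm_{\tilde z_1\circ\beta_1\circ\zeta}$ via Lemma~\ref{lemma:OrmCompEquiv} and the hypothesis $\rho_{z_1}^\pm\circ\alpha_1=\rho_{\tilde z_1}^\pm\circ\beta_1$; and for (ii) both you and the paper split the index set via Lemma~\ref{lemma:OrmCompEquiv} into the part coming from $\mathcal{T}^\pm_{z_1}$ (where one uses $\alpha_j(\alpha_j^{-1}(t))=t$ to freeze the first argument) and the part coming from $\mathcal{T}^\pm_{z_2\circ\beta}$ (where one uses $\alpha_j\to\alpha$ uniformly), and then verify $\alpha_j^{-1}(t)\uparrow\alpha^{-1}(t)$ pointwise. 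The paper's verification of this last convergence is the sandwich $\alpha^{-1}(t)\ge\alpha_j^{-1}(t)\ge\alpha^{-1}(t-\delta)$ together with left-continuity of $\alpha^{-1}$, which is the same mechanism underlying your argument.

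One remark: you are in fact more scrupulous than the paper about the cutoffs $\kappa^\pm$. The paper's proof silently replaces $\Phi$ by the maximum over $\mathscr{S}^\pm[z_1,z_2](\alpha,\beta)=\{|\rho_{z_1}^\pm(\alpha(t))-\rho_{z_2}^\pm(\beta(t))|:t\in\mathcal{T}^\pm_{z_1\circ\alpha}\cup\mathcal{T}^\pm_{z_2\circ\beta}\}$ without the intersection with $(\kappa^\pm,\tau]$, so the borderline issue you flag is not addressed there. Your proposed resolution (that a term landing exactly at $\kappa^\pm$ is dominated by a surviving term) is the right instinct, but it needs a line or two of justification; the cleanest way is to note that on $[0,\kappa^+]$ one of $\rho_{z_1}^+\circ\alpha,\rho_{z_2}^+\circ\beta$ is constant equal to its initial value, so any contribution there is bounded by a value already attained at the first index point strictly above $\kappa^+$ (or by $|z_1(\tau)-z_2(\tau)|$ if no such point exists).
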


 \begin{proof}
Set 
 \begin{displaymath}
 	\mathscr{S}^\pm[z_1,z_2](\alpha,\beta):=
	 \seq{\abs{\rho_{z_1}^\pm(\alpha(t))
	 -\rho_{z_2}^\pm(\beta(t))}\,:\,t \in \mathcal{T}^\pm_{z_1 \circ \alpha}
	 \cup \mathcal{T}^\pm_{z_2 \circ \beta}},
 \end{displaymath}
 so that 
 \begin{displaymath}
   \Phi[z_1,z_2](\alpha,\beta) = \max_\pm \seq{\mathscr{S}^\pm[z_1,z_2](\alpha,\beta) 
   \cup \seq{\abs{z_1(\tau)-z_2(\tau)}}}.
  \end{displaymath}

(i) The statement is a trivial consequence of the definition.

(ii)  By Lemma~\ref{lemma:OrmCompEquiv} and the 
fact that $\alpha_j(\alpha_j^{-1}(t)) = t$,
 \begin{align*}
 	\mathscr{S}^\pm[z_1,z_2](\alpha_j,\beta) &= 
	\seq{\abs{\rho_{z_1}^\pm(t)
	-\rho_{z_2}^\pm(\beta(\alpha_j^{-1}(t)))}\,:\,t \in \mathcal{T}^\pm_{z_1}} \\
	&\quad \cup \seq{\abs{\rho_{z_1}^\pm(\alpha_j(t))
	-\rho_{z_2}^\pm(\beta(t))}\,:\,t \in \mathcal{T}^\pm_{z_2 \circ \beta}}.
 \end{align*}
By assumption, as $j\to \infty$, $\alpha_j \downarrow \alpha$ 
uniformly on $[0,\tau]$. It remains to verify that 
$\alpha_j^{-1}(t) \uparrow \alpha^{-1}(t)$ for 
any $t \in [0,\tau]$. Let $\delta > 0$. By assumption 
there exists $j_0(\delta) \geq 0$ such that 
\begin{displaymath}
  \alpha(s) \leq \alpha_j(s) \leq \alpha(s) + \delta 
  \mbox{ for all $j \geq j_0(\delta)$ and $s \in [0,\tau]$.}
\end{displaymath}
Hence, for $j \geq j_0(\delta)$,
\begin{align*}
   \alpha^{-1}(t) &= \min\seq{0 \leq s \leq \tau\,:\,\alpha(s) \geq t} \\
   &\geq \min\seq{0 \leq s \leq \tau\,:\,\alpha_j(s) \geq t} = \alpha_j^{-1}(t) \\
&\geq \min\seq{0 \leq s \leq \tau\,:\,\alpha(s) \geq t-\delta} = \alpha^{-1}(t-\delta).
 \end{align*}
 The result now follows since $\alpha^{-1}$ is left-continuous.
 
(iii) Observe that
\begin{align*}
  \mathscr{S}^\pm[z_1,z_2](\alpha_1 \circ \zeta,\beta)&
  := \seq{\abs{\rho_{z_1}^\pm(\alpha_1 \circ \zeta(t))
  -\rho_{z_2}^\pm(\beta(t))}\,:\,t \in \mathcal{T}^\pm_{z_1 \circ \alpha_1 \circ \zeta} 
  \cup \mathcal{T}^\pm_{z_2 \circ \beta}} \\
  &\;= \seq{\abs{\rho_{\tilde{z}_1}^\pm(\beta_1 
  \circ \zeta(t))-\rho_{z_2}^\pm(\beta(t)}\,:\,t \in 
  \mathcal{T}^\pm_{z_1 \circ \alpha_1 
  \circ \zeta} \cup \mathcal{T}^\pm_{z_2 \circ \beta}}.
 \end{align*}
 By assumption, $\mathcal{T}^\pm_{z_1 \circ \alpha_1} 
 = \mathcal{T}^\pm_{\tilde{z}_1 \circ \beta_1}$ and 
 so, by Lemma~\ref{lemma:OrmCompEquiv}, 
 $\mathcal{T}^\pm_{z_1 \circ \alpha_1 \circ \zeta} 
 =\mathcal{T}^\pm_{\tilde{z}_1 \circ \beta_1 \circ \zeta}$. It follows that 
 \begin{displaymath}
\mathscr{S}^\pm[z_1,z_2](\alpha_1 \circ \zeta,\beta) 
= \mathscr{S}^\pm[\tilde{z}_1,z_2](\beta_1 \circ \zeta,\beta).
 \end{displaymath}
 The result now follows as $\tilde{z}_1(\tau) = z_1(\tau)$.
 \end{proof}
 
 \begin{lemma}\label{lemma:LinConstEquiv}
Let $z \in C_0([0,\tau])$ and suppose $\orm_{\tau,M_\pm}(z)$ is well defined. 
Let $\mathscr{S}^\pm$ be disjoint finite subsets of $(0,\tau)$ 
such that $\mathcal{T}_z^\pm \subset \mathscr{S}^\pm$. 
Let $\Gamma^\pm:\mathscr{S}^\pm \rightarrow \R^\pm$ satisfy 
  \begin{displaymath}
   \begin{cases}
	\Gamma^\pm(t) = \rho_z^\pm(t), \qquad t \in \mathcal{T}_z^\pm, \\
	\Gamma^+(t) \leq \rho_z^+(t), \qquad t \in \mathscr{S}^+,\\
	\Gamma^-(t) \geq \rho_z^-(t), \qquad t \in \mathscr{S}^-.
   \end{cases}
\end{displaymath}
Define
\begin{displaymath}
   \mathcal{I} = \seq{(t,\Gamma^+(t)), t \in \mathscr{S}^+} 
   \cup \seq{(t,\Gamma^-(t)), t \in \mathscr{S}^-} \cup \seq{(0,0), (\tau,z(\tau))}.
  \end{displaymath}
Let $\tilde{z}$ be the piecewise linear interpolation of 
$\mathcal{I}$, i.e., $\tilde{z} = \mathcal{L}(\mathcal{I})$. 
Then $\tilde{z} \sim z$ on $[0,\tau]$ for any initial value $u_0$ 
satisfying \eqref{eq:LowUpOnInitial}.
 \end{lemma}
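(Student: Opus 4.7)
The plan is to invoke Theorem~\ref{thm:GeneralEquiv} by constructing maps $\alpha_1,\alpha_2\in\mathscr{A}_\tau$ satisfying $\rho_z^\pm\circ\alpha_1=\rho_{\tilde z}^\pm\circ\alpha_2$ on $[0,\tau]$; the endpoint condition $z(\tau)=\tilde z(\tau)$ is immediate from the construction of $\mathcal{I}$. Since $\tilde z$ is piecewise linear with nodes in $\mathscr{S}^+\cup\mathscr{S}^-\cup\seq{0,\tau}$, its extrema on any initial interval $[0,t]$ are attained at nodes or at $t$. Combining the hypotheses $\Gamma^+\le\rho_z^+$ on $\mathscr{S}^+$, $\Gamma^-\ge\rho_z^-$ on $\mathscr{S}^-$, the sign conventions $\Gamma^+\ge 0\ge\Gamma^-$, and the monotonicity of $\rho_z^\pm$, I would first establish the global bounds $\rho_{\tilde z}^+\le\rho_z^+$ and $\rho_{\tilde z}^-\ge\rho_z^-$ throughout $[0,\tau]$.

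Next I would prove the equalities $\rho_{\tilde z}^\pm(\tau_n)=\rho_z^\pm(\tau_n)$ at every Orm knot $\tau_n\in\mathcal{T}_z$. For the matching sign (e.g.\ $\rho^+$ at $\tau_n\in\mathcal{T}_z^+$), this follows immediately from $\tilde z(\tau_n)=\Gamma^+(\tau_n)=\rho_z^+(\tau_n)$ together with the global bound. For the opposite sign (e.g.\ $\rho^-$ at $\tau_n\in\mathcal{T}_z^+$), the Orm procedure gives $(\tau_{n+1},\tau_n]\cap\Bm_z=\emptyset$ so, by Lemma~\ref{lemma:BpBmProp}(iii), $\rho_z^-$ is constant on $[\tau_{n+1},\tau_n]$; verifying $\tilde z\ge\rho_z^-(\tau_{n+1})$ at every node of $[\tau_{n+1},\tau_n]$---which follows from $\Gamma^-\ge\rho_z^-=\rho_z^-(\tau_{n+1})$ on $\mathscr{S}^-$ and $\Gamma^+\ge 0$ on $\mathscr{S}^+$---forces $\rho_{\tilde z}^-$ to be the same constant on that interval. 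The boundary value $\tau_0=\tau$ is handled analogously by a case analysis on whether $\tau\in\Bp_z\cup\Bm_z$.

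On each Orm subinterval $[\tau_{n+1},\tau_n]$ I then would construct continuous nondecreasing surjections $\alpha_1^n,\alpha_2^n:[\tau_{n+1},\tau_n]\to[\tau_{n+1},\tau_n]$ that fix the endpoints and satisfy $\rho_z^\pm\circ\alpha_1^n=\rho_{\tilde z}^\pm\circ\alpha_2^n$. By the Orm procedure combined with Lemma~\ref{lemma:BpBmProp}(iii), at least one of $\rho_z^+,\rho_z^-$ is constant on $[\tau_{n+1},\tau_n]$, and by the preceding step the corresponding $\rho_{\tilde z}^\pm$ equals the same constant, so that identity is automatic. For the remaining monotone component, both $\rho_z^\bullet$ and $\rho_{\tilde z}^\bullet$ are continuous nondecreasing surjections onto a common target interval, so a standard simultaneous level-set construction (analogous to Claim~1 in the proof of Lemma~\ref{lemma:PiecewisePrecomEquiv}) produces the required $\alpha_i^n$. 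Concatenating across $\seq{\tau_n}_{n=0}^N$ yields global $\alpha_1,\alpha_2\in\mathscr{A}_\tau$, and Theorem~\ref{thm:GeneralEquiv} then concludes that $\tilde z\sim z$ on $[0,\tau]$ for every $u_0$ satisfying \eqref{eq:LowUpOnInitial}.

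The main technical obstacle is the opposite-sign identity in the second step, since it couples the one-sided constraints on $\Gamma^\pm$ with the constancy of $\rho_z^\mp$ on the subintervals dictated by the Orm structure; once this identity is established, the level-set construction in the third step is a routine reparametrization and the final invocation of Theorem~\ref{thm:GeneralEquiv} is immediate.
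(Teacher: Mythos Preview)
Your approach is correct but takes a different route from the paper's. Both arguments begin with the same ingredients---the global bounds $\rho_{\tilde z}^+\le\rho_z^+$, $\rho_{\tilde z}^-\ge\rho_z^-$, and the equalities at the Orm knots $\tau_n\in\mathcal{T}_z$---but they diverge in how they conclude. The paper does not build reparametrizations at all: instead it shows directly, by induction on $n$, that $\mathcal{T}_{\tilde z}=\mathcal{T}_z$. The inductive step uses $\partial_-\rho_{\tilde z}^+(\tau_n)>0$ (via Lemma~\ref{lemma:BpBmProp}(iv)) to place $\tau_n\in\Bp_{\tilde z}$, and constancy of $\rho_{\tilde z}^\pm$ on $[\tau_n,\tau_{n-1}]$ to rule out earlier candidates; once $\mathcal{T}_{\tilde z}=\mathcal{T}_z$ and $\tilde z(\tau_n)=\Gamma^\pm(\tau_n)=\rho_z^\pm(\tau_n)=z(\tau_n)$, one has $\orm_{\tau,M_\pm}(z)=\orm_{\tau,M_\pm}(\tilde z)$ as an exact equality of piecewise linear paths, and Theorem~\ref{thm:OrmEquiv} finishes.

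Your route instead invokes Theorem~\ref{thm:GeneralEquiv}, which requires manufacturing $\alpha_1,\alpha_2\in\mathscr{A}_\tau$ on each Orm subinterval and concatenating. This works---on $[\tau_{n+1},\tau_n]$ one of $\rho_z^\pm$ is constant, the matching $\rho_{\tilde z}^\pm$ is the same constant by your Step~3, and for the remaining component Lemma~\ref{lemma:PiecewisePrecomEquiv} applies (its partition hypothesis is met because $\rho_{\tilde z}^\pm$ is piecewise linear, so on each linear piece it is either strictly monotone or constant). The trade-off is that your argument carries the extra baggage of the level-set construction, whereas the paper's gets the stronger structural conclusion $\orm(z)=\orm(\tilde z)$ essentially for free from the same preliminary estimates. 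One minor point: what you need is Lemma~\ref{lemma:PiecewisePrecomEquiv} itself, not merely Claim~1 of its proof; you should cite the lemma and note that the piecewise linearity of $\tilde z$ supplies the partition condition.
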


\begin{proof}
Let $\seq{\tau_n}_{n \geq 0}^N = \mathcal{T}_z$ and 
$\seq{\tilde{\tau}_n}_{n \geq 0}^{\tilde{N}} = \mathcal{T}_{\tilde{z}}$. 
We want to show that $\mathcal{T}_{\tilde{z}} = \mathcal{T}_z$. The result then 
follows by Theorem~\ref{thm:OrmEquiv} as 
$\orm_{\tau,M_\pm}(z) = \orm_{\tau,M_\pm}(\tilde{z})$. 
First observe, for any $t \in \mathscr{S}^+$,
\begin{align*}
	\rho_{\tilde{z}}^+(t) 
	&= \max\seq{\frac{1}{M_-},\max_{0 \leq s \leq t}\seq{\tilde{z}(s)}}
	\\ & = \max\seq{\frac{1}{M_-},\max\seq{\Gamma^+(s)\,:\, s \in [0,t] 
	\cap \mathscr{S}^+}} \leq \rho_z^+(t),
\end{align*}
with equality whenever $t \in \mathcal{T}_z^+$. 
Recall that $\tau_1 := \max\seq{\Bp_{z} \cup \Bm_z \cap [0,\tau]}$, we 
want to show that $\tilde{\tau}_1 = \tau_1$. As $(\tau_1,\tau] \cap \Bp_z = \emptyset$ 
it follows by Lemma~\ref{lemma:BpBmProp}~(iii) that $\rho_{z}^+$ is 
constant on $[\tau_1,\tau]$. Consequently, by the above, the 
same holds for $\rho_{\tilde{z}}^+$. Similarly, $\rho_{\tilde{z}}^-$ is 
constant on $[\tau_1,\tau]$. Suppose $\tau_1 \in \Bp_z$ and 
let $t = \max\seq{\seq{0} \cup \mathscr{S}^+ \cup \mathscr{S}^- 
\cap [0,\tau_1)}$. As $\tau_1 \in \Bp_z$ we have $\rho_z^+(t) < \rho_z^+(\tau_1)$. 
By the above inequality,
\begin{displaymath}
   \partial_-\rho_{\tilde{z}}^+(\tau_1) 
   = \frac{\rho_{\tilde{z}}^+(\tau_1)-\rho_{\tilde{z}}^+(t)}{\tau_1-t} 
   \geq \frac{\rho_z^+(\tau_1)-\rho_z^+(t)}{\tau_1-t} > 0.
\end{displaymath}
Hence, by Lemma~\ref{lemma:BpBmProp}~(iv), $\tau_1 \in \Bp_{\tilde{z}}$. 
It does follow that $\tilde{\tau}_1 \geq \tau_1$ and so $\tilde{\tau}_1 = \tau_1$. 
Similar argument holds if $\tau_1 \in \Bm_z$. The proof now 
follows by induction, each step being similar to the one above.
\end{proof}

The next result contains the main ideas needed 
in the proof of Theorem~\ref{thm:EqDist}.
\begin{lemma}\label{lemma:PhiInf}
  Fix two paths $z_1,z_2 \in C_0([0,\tau])$ and let $\Phi$ be 
  the function defined in Theorem~\ref{thm:EqDist}. Then
  \begin{itemize}
   \item[(i)] $\Phi[z_1,z_2](\iota,\iota) \leq \norm{z_1-z_2}_\infty$.
   \item[(ii)] Suppose $\alpha_1,\alpha_2 \in \mathscr{A}_\tau$ 
   satisfy $\mathcal{T}_{z_1 \circ \alpha_1}^+ 
   \cap \mathcal{T}_{z_2 \circ \alpha_2}^- 
   =  \emptyset$, $\mathcal{T}_{z_1 \circ \alpha}^- 
   \cap \mathcal{T}_{z_2 \circ \alpha_2}^+ = \emptyset$, and 
   $\alpha_i^{-1}(\tau) < \tau$ for $i = 1,2$. Then there 
   exists $\tilde{z}_i$ with  $z_i \overset{\tau,M^i_\pm}{\sim} \tilde{z}_i$, 
   $i = 1,2$, and
    \begin{displaymath}
      \Phi[z_1,z_2](\alpha_1,\alpha_2) = \norm{\tilde{z}_1-\tilde{z}_2}_\infty.	
    \end{displaymath} 
  \end{itemize}
\end{lemma}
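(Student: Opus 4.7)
For (i), the proof is a direct case analysis. At any $t \in \mathcal{T}_{z_1}^+$ with $t > \kappa^+$, Lemma~\ref{lemma:BpBmProp}~(i) gives $z_1(t) = \rho_{z_1}^+(t)$, while the defining condition $t > \kappa^+$ forces the running max of at least one of the two paths to strictly exceed $\max\seq{1/M_-^1, 1/M_-^2}$. Combining this with the elementary estimate that running maxima are $1$-Lipschitz in the uniform norm, $\abs{\max_{s \le t} z_1(s) - \max_{s \le t} z_2(s)} \le \norm{z_1 - z_2}_\infty$, yields $\abs{\rho_{z_1}^+(t) - \rho_{z_2}^+(t)} \le \norm{z_1 - z_2}_\infty$ through a short case split based on which of the two floors $1/M_-^i$ is reached. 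The analogous argument handles the $\mathcal{T}_{z_2}^+$ contributions and the $\rho^-$ terms, and $\abs{z_1(\tau) - z_2(\tau)} \le \norm{z_1 - z_2}_\infty$ is immediate; taking the max gives (i).

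For (ii), the plan is to construct $\tilde z_1, \tilde z_2$ as piecewise linear paths on a common partition of $[0,\tau]$ whose values at the nodes realize $\Phi$ exactly. I would first reduce to the case $\alpha_1 = \alpha_2 = \iota$: by Theorem~\ref{thm:OrmEquiv} and Lemma~\ref{lemma:OrmCompEquiv}, the paths $\hat z_i := z_i \circ \alpha_i$ satisfy $\hat z_i \sim z_i$ on $[0,\tau]$, while $\mathcal{T}_{\hat z_i}^\pm = \seq{\alpha_i^{-1}(t) \,:\, t \in \mathcal{T}_{z_i}^\pm}$ and $\rho_{\hat z_i}^\pm = \rho_{z_i}^\pm \circ \alpha_i$ together give $\Phi[z_1,z_2](\alpha_1,\alpha_2) = \Phi[\hat z_1,\hat z_2](\iota,\iota)$.

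Setting $T := \mathcal{T}_{\hat z_1} \cup \mathcal{T}_{\hat z_2}$ (which contains both endpoints), I would define $\tilde z_i$ as the piecewise linear interpolant through $\seq{(s, v_i(s)) \,:\, s \in T}$ with the following choice of values: $v_i(0) := 0$, $v_i(\tau) := z_i(\tau)$; $v_i(s) := \rho_{\hat z_i}^\pm(s)$ whenever $s \in \mathcal{T}_{\hat z_i}^\pm$; and for $s \in \mathcal{T}_{\hat z_j}^\pm \setminus \mathcal{T}_{\hat z_i}$ with $j \neq i$, set $v_i(s) := \rho_{\hat z_i}^\pm(s)$ if $s > \kappa^\pm$, and $v_i(s) := \rho_{\hat z_j}^\pm(s) = v_j(s)$ if $s \le \kappa^\pm$. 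The disjointness hypotheses $\mathcal{T}_{\hat z_1}^+ \cap \mathcal{T}_{\hat z_2}^- = \emptyset$ and $\mathcal{T}_{\hat z_1}^- \cap \mathcal{T}_{\hat z_2}^+ = \emptyset$ make the sign assignment at each interior node of $T$ unambiguous, and the condition $\alpha_i^{-1}(\tau) < \tau$ forces $\hat z_i$ to be constant on $[\alpha_i^{-1}(\tau), \tau]$, so $\tau \notin \Bp_{\hat z_i} \cup \Bm_{\hat z_i}$ and the endpoint value $v_i(\tau) = z_i(\tau)$ does not conflict with the Orm skeleton. Lemma~\ref{lemma:LinConstEquiv} then delivers $\tilde z_i \sim \hat z_i \sim z_i$, because the chosen $\Gamma_i^\pm$ satisfy the structural inequalities: in particular, when $s \le \kappa^+$ and $s \in \mathcal{T}_{\hat z_j}^+$, one must have $1/M_-^j < 1/M_-^i$ (otherwise $\rho_{\hat z_j}^+(s) > 1/M_-^j$ would contradict $s \le \kappa^+$), whence $\rho_{\hat z_j}^+(s) \le 1/M_-^i \le \rho_{\hat z_i}^+(s)$.

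It remains to verify $\norm{\tilde z_1 - \tilde z_2}_\infty = \Phi[\hat z_1,\hat z_2](\iota,\iota)$. Because both $\tilde z_1$ and $\tilde z_2$ are piecewise linear on the partition $T$, the sup of $\abs{\tilde z_1 - \tilde z_2}$ is attained at some node, and by construction $\abs{\tilde z_1(s) - \tilde z_2(s)}$ at each node is either a term in the $\Phi$ max (when $s > \kappa^\pm$ or $s = \tau$) or equals zero (when $s \le \kappa^\pm$). This gives $\norm{\tilde z_1 - \tilde z_2}_\infty \le \Phi$; the reverse inequality holds because each contributing term is realized at the corresponding node. The main obstacle I anticipate is the bookkeeping at nodes $s \le \kappa^\pm$: one must ensure simultaneously that the chosen values obey the structural bounds of Lemma~\ref{lemma:LinConstEquiv} (so the Orm sequences of $\tilde z_i$ and $\hat z_i$ match) and that the resulting pairwise differences do not exceed the terms already in $\Phi$. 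The disjointness conditions together with $\alpha_i^{-1}(\tau) < \tau$ in the hypothesis are precisely what keeps these two requirements compatible.
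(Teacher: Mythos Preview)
Your proposal is correct and follows essentially the same approach as the paper. For (i), you exploit that at $t\in\mathcal{T}_{z_i}^+$ one already has $z_i(t)=\rho_{z_i}^+(t)$, which slightly shortcuts the paper's argument (the paper proves the inequality for \emph{all} $t\in(\kappa^+,\tau]$ by locating an auxiliary point $\xi=\max\{[0,t]\cap\Bp_{z_1}\}$ where the running max is attained, then decomposing $\rho_{z_1}^+(t)-\rho_{z_2}^+(t)$ through $z_1(\xi)-z_2(\xi)$). For (ii), your construction coincides with the paper's: after the same reduction to $\hat z_i=z_i\circ\alpha_i$, the paper sets $\Gamma_j^+(t)=\rho_{\hat z_1}^+(t)\wedge\rho_{\hat z_2}^+(t)$ on $[0,\kappa^+]$ and $\Gamma_j^+(t)=\rho_{\hat z_j}^+(t)$ on $(\kappa^+,\tau]$, then invokes Lemma~\ref{lemma:LinConstEquiv}; your node values $v_i(s)$ agree with this choice (your observation that $\rho_{\hat z_j}^+(s)\le\rho_{\hat z_i}^+(s)$ for $s\le\kappa^+$, $s\in\mathcal{T}_{\hat z_j}^+$, is exactly why the minimum reduces to $\rho_{\hat z_j}^+(s)$ there).
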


\begin{proof} For $a,b \in \R$, let $a \wedge b := \min\seq{a,b}$ 
and $a \vee b := \max\seq{a,b}$. Consider (i). Pick $t \in (\kappa^+,\tau]$ and 
suppose $\rho^+_{z_1}(t) > \rho^+_{z_2}(t)$. As $t > \kappa^+$,
\begin{displaymath}
  \rho_{z_1}^+(t) = \rho_{z_1}^+(t)\vee \rho_{z_2}^+(t) > 
  \frac{1}{M^1_-} \vee \frac{1}{M^2_-} \geq \rho_{z_1}^+(0).
\end{displaymath}
By Lemma~\ref{lemma:BpBmProp}~(iii), 
$(0,t] \cap \Bp_{z_1} \neq \emptyset$. 
Hence, $\xi := \max\seq{[0,t] \cap \Bp_{z_1}} > 0$. 
As $(\xi,t] \cap \Bp_{z_1} = \emptyset$, it follows by 
Lemma~\ref{lemma:BpBmProp}~(i) and (iii) 
that $z_1(\xi) = \rho^+_{z_1}(\xi) = \rho^+_{z_1}(t)$. 
Hence,
\begin{displaymath}
  \rho^+_{z_1}(t)-\rho^+_{z_2}(t) = z_1(\xi) - z_2(\xi) 
  + \underbrace{(z_2(\xi)-\rho^+_{z_2}(\xi))}_{\leq 0} 
  + \underbrace{(\rho^+_{z_2}(\xi)-\rho^+_{z_2}(t))}_{\leq 0}.
\end{displaymath}
A similar argument holds for $\rho^+_{z_1}(t) < \rho^+_{z_2}(t)$. 
Consequently,
 \begin{displaymath}
  \abs{\rho^+_{z_1}(t)-\rho^+_{z_2}(t)} 
  \leq \norm{z_1-z_2}_\infty, \qquad
  \mbox{ for any $t \in (\kappa^+,\tau]$.}
\end{displaymath}
Similarly, 
\begin{displaymath}
  \abs{\rho^-_{z_1}(t)-\rho^-_{z_2}(t)} \leq \norm{z_1-z_2}_\infty, 
  \mbox{ for any $t \in (\kappa^-,\tau]$},
\end{displaymath}
 which finishes the proof of (i).

 Consider (ii). Let $\zeta_i = z_i \circ \alpha_i$ for $i = 1,2$. We want to apply Lemma~\ref{lemma:LinConstEquiv} to construct suitable $\tilde{z}_1,\tilde{z}_2$. 
 Let $\mathscr{S}^\pm := \mathcal{T}_{\zeta_1}^\pm \cup \mathcal{T}_{\zeta_2}^\pm$. 
 By assumption and Lemma~\ref{lemma:BpBmProp}~(i),
 \begin{displaymath}
  \mathscr{S}^+ \cap \mathscr{S}^- 
  = (\mathcal{T}_{\zeta_1}^+ \cup \mathcal{T}_{\zeta_2}^+) 
  \cap (\mathcal{T}_{\zeta_1}^- \cup \mathcal{T}_{\zeta_2}^-) 
  = (\mathcal{T}_{\zeta_1}^+ \cap \mathcal{T}_{\zeta_2}^-) 
  \cup (\mathcal{T}_{\zeta_1}^- \cap \mathcal{T}_{\zeta_2}^+) = \emptyset.
 \end{displaymath}
 By Lemma~\ref{lemma:BpBmComp}~(i), $\Bpm_{z_i \circ \alpha_i} 
 = \alpha_i^{-1}(\Bpm_{z_i}) \cap \mathscr{B}_{\alpha_i}$. 
 As $\alpha_i^{-1}(\tau) < \tau$ so 
 that $\tau \notin \mathscr{B}_{\alpha_i}$ it follows that 
 $\tau \notin  \mathcal{T}_{\zeta_i}^\pm$ for $i = 1,2$. Let 
 \begin{displaymath}
  	\Gamma_j^+(t) = \begin{cases}
                   \rho_{\zeta_1}^+(t) \wedge \rho_{\zeta_2}^+(t),	 & t \in [0,\kappa^+], \\
		    \rho_{\zeta_j}^+(t),				 & t \in (\kappa^+,\tau], 
                  \end{cases}
  		\qquad
	  \Gamma_j^-(t) = \begin{cases}
                   \rho_{\zeta_1}^-(t) \vee \rho_{\zeta_2}^-(t),	 & t \in [0,\kappa^-], \\
		    \rho_{\zeta_j}^+(t),				 & t \in (\kappa^-,\tau]. 
                  \end{cases}
 \end{displaymath}
 In order to apply Lemma~\ref{lemma:LinConstEquiv}, it 
 remains to verify that $\Gamma^\pm_i(t) = \rho_{\zeta_i}^\pm(t)$ 
 for any $t \in \mathcal{T}_{\zeta_i}^\pm$, $i = 1,2$. We consider $\Gamma^+_1$, 
 the proof for the others are similar. It is obviously true, for any 
 $t \in \mathcal{T}_{\zeta_1}^+ \cap (\kappa^+,\tau]$. 
 Suppose $\frac{1}{M^1_-} \geq \frac{1}{M^2_-}$. Then $\rho_{\zeta_1}^+(\kappa^+) 
 = \frac{1}{M^1_-} = \rho_{\zeta_1}^+(0)$ and so $(0,\kappa^+]\cap \Bp_{\zeta_1} 
 = \emptyset$. Hence $\mathcal{T}_{\zeta_1}^+ \cap [0,\kappa^+] = \emptyset$, 
 and we are done. Suppose $\frac{1}{M^1_-} \leq \frac{1}{M^2_-}$. 
 Then, for any $t \in [0,\kappa^+]$,
 \begin{displaymath}
  \rho_{\zeta_2}^+(t) = \rho_{\zeta_2}(\kappa^+) 
  \geq \rho_{\zeta_1}^+(\kappa^+) \geq \rho_{\zeta_1}^+(t),
 \end{displaymath}
 and so $\rho_{\zeta_1}^+(t) \wedge \rho_{\zeta_2}^+(t) = \rho_{\zeta_1}^+(t)$, and 
 we are done. It remains  to verify that 
 \begin{displaymath}
  \norm{\tilde{z}_1-\tilde{z}_2}_\infty 
  	= \max\seq{\abs{\tilde{z}_1(t)-\tilde{z}_2(t)}\,:\,t 
	\in \mathcal{T}_{\zeta_1} \cup \mathcal{T}_{\zeta_2}}
  	= \Phi[z_1,z_2](\alpha_1,\alpha_2).
 \end{displaymath}
The first equality follows since $\tilde{z}_1,\tilde{z}_2$ are piecewise linear, while
the second equality follows as
\begin{align*}
  \max\seq{\abs{\tilde{z}_1(t)-\tilde{z}_2(t)}\,:\,t \in \mathcal{T}_{\zeta_1} 
  \cup \mathcal{T}_{\zeta_2}} 
    = &\max\seq{\abs{\Gamma_1^+(t)-\Gamma_2^+(t)}\,:\,t \in \mathscr{S}^+ 
    \cap (\kappa^+,\tau]} \\
    &\vee
    \max\seq{\abs{\Gamma_1^-(t)-\Gamma_2^-(t)}\,:\,t \in \mathscr{S}^- 
    \cap (\kappa^-,\tau])} \\
    &\vee
    \abs{z_1(\tau)-z_2(\tau)}.
\end{align*}
 This finishes the proof of (ii).
 \end{proof}

Before we turn to the proof of Theorem~\ref{thm:EqDist}, we need 

\begin{lemma}\label{lemma:UpperApprox}
Let $\alpha \in \mathscr{A}_\tau$ satisfy $\alpha(t) > 0$ for all $t \in (0,\tau]$. 
Then, for any $\xi \in \mathscr{A}_\tau$, there exists 
$\zeta^\varepsilon \in \mathscr{A}_\tau$ such that 
$\alpha \circ \zeta^\varepsilon \downarrow \xi$ as $\varepsilon \downarrow 0$ 
with respect to uniform convergence.
 \end{lemma}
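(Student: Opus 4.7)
The strategy is to approximate $\alpha$ from below by a strictly increasing homeomorphism of $[0,\tau]$; its classical inverse, precomposed with $\xi$, will provide $\zeta^\varepsilon$. For $\varepsilon \in (0,1)$, the plan is to introduce
\[
    \tilde\alpha^\varepsilon(s) := \alpha(s)\, g_\varepsilon(s),
    \qquad g_\varepsilon(s) := 1 - \varepsilon\left(1 - \tfrac{s}{\tau}\right),
    \qquad s \in [0,\tau].
\]
One checks at once that $\tilde\alpha^\varepsilon(0) = 0$, $\tilde\alpha^\varepsilon(\tau) = \tau$, and that $g_\varepsilon$ is linear, strictly increasing, with $g_\varepsilon([0,\tau]) = [1-\varepsilon,1] \subset (0,1]$. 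The standing hypothesis $\alpha(t) > 0$ for $t \in (0,\tau]$ rules out a flat zero-plateau of $\alpha$ near $s = 0$, and this is exactly what forces the product $\alpha \cdot g_\varepsilon$ to be strictly increasing on $[0,\tau]$. The bookkeeping identity
\[
    0 \le \alpha(s) - \tilde\alpha^\varepsilon(s)
    = \varepsilon\, \alpha(s)\left(1 - \tfrac{s}{\tau}\right)
    \le \varepsilon\tau
\]
then yields simultaneously $\tilde\alpha^\varepsilon \le \alpha$ and $\norm{\alpha - \tilde\alpha^\varepsilon}_{\infty} \le \varepsilon\tau$.

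Next I would set $\zeta^\varepsilon := (\tilde\alpha^\varepsilon)^{-1} \circ \xi$, where $(\tilde\alpha^\varepsilon)^{-1}$ is the genuine (classical) inverse. Since $(\tilde\alpha^\varepsilon)^{-1}$ and $\xi$ are both elements of $\mathscr{A}_\tau$, so is their composition. From $\tilde\alpha^\varepsilon(\zeta^\varepsilon(t)) = \xi(t)$ and $\tilde\alpha^\varepsilon \le \alpha$ I obtain
\[
    0 \le \alpha(\zeta^\varepsilon(t)) - \xi(t)
    = \alpha(\zeta^\varepsilon(t)) - \tilde\alpha^\varepsilon(\zeta^\varepsilon(t))
    \le \norm{\alpha - \tilde\alpha^\varepsilon}_{\infty} \le \varepsilon\tau,
\]
uniformly in $t \in [0,\tau]$, which gives $\alpha\circ\zeta^\varepsilon \ge \xi$ together with uniform convergence as $\varepsilon \downarrow 0$. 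Finally, $g_\varepsilon(s)$ is nonincreasing in $\varepsilon$ for each fixed $s \in [0,\tau]$, and hence so is $\tilde\alpha^\varepsilon(s)$. Inverting reverses the order, so $\varepsilon \mapsto \zeta^\varepsilon(t)$ is nondecreasing, and applying the nondecreasing $\alpha$ preserves this. Thus $\alpha \circ \zeta^\varepsilon$ is nonincreasing as $\varepsilon$ decreases, realising the monotone decrease $\alpha \circ \zeta^\varepsilon \downarrow \xi$.

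The hard part, to my mind, is precisely the choice of perturbation: it must be simultaneously (i) dominated by $\alpha$, (ii) a homeomorphism of $[0,\tau]$ so that $(\tilde\alpha^\varepsilon)^{-1} \in \mathscr{A}_\tau$, (iii) uniformly close to $\alpha$ with an explicit rate, and (iv) monotone in $\varepsilon$. The product with the linear factor $g_\varepsilon$ pinned at $g_\varepsilon(\tau) = 1$ accomplishes all four requirements at once, and the role of the assumption $\alpha(t) > 0$ for $t \in (0,\tau]$ is to prevent $\tilde\alpha^\varepsilon$ from being constant on a neighbourhood of $s = 0$, which would destroy invertibility.
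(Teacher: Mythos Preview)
Your proof is correct and follows the same overall strategy as the paper: perturb $\alpha$ to a strictly increasing map $\tilde\alpha^\varepsilon\le\alpha$ with $\|\alpha-\tilde\alpha^\varepsilon\|_\infty=O(\varepsilon)$, then set $\zeta^\varepsilon=(\tilde\alpha^\varepsilon)^{-1}\circ\xi$. The only difference is the choice of perturbation: the paper takes the convex combination $\alpha^\varepsilon=(1-\varepsilon)\alpha+\varepsilon\breve\alpha$ with the lower convex envelope $\breve\alpha$, whereas you multiply by the linear factor $g_\varepsilon(s)=1-\varepsilon(1-s/\tau)$. Your multiplicative choice is arguably more elementary, since the strict monotonicity of $\alpha\cdot g_\varepsilon$ follows from the one-line estimate $\tilde\alpha^\varepsilon(s_2)-\tilde\alpha^\varepsilon(s_1)\ge\alpha(s_1)\bigl(g_\varepsilon(s_2)-g_\varepsilon(s_1)\bigr)>0$ for $0<s_1<s_2$, while the paper's claim that $\breve\alpha$ is strictly increasing, though true under the hypothesis $\alpha(t)>0$ on $(0,\tau]$, requires a short separate argument about convex envelopes. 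Both constructions yield the monotone dependence on $\varepsilon$ needed for $\alpha\circ\zeta^\varepsilon\downarrow\xi$.
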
 

\begin{proof}
Let $\breve{\alpha}$ be the lower convex envelope of $\alpha$:
 \begin{displaymath}
  \breve{\alpha}(t) = \sup\seq{\beta(t)\,:\,\beta \leq \alpha \mbox{ and 
  $\beta$ is convex on $[0,\tau]$}}.
 \end{displaymath}
 Then $\breve{\alpha}$ is strictly increasing. Let $\xi \in \mathscr{A}$ and define
 \begin{displaymath}
  \alpha^\varepsilon = (1-\varepsilon)\alpha + \varepsilon \breve{\alpha} , \qquad 
  \xi = \alpha^\varepsilon \circ \zeta^\varepsilon, \qquad
  \xi^\varepsilon = \alpha \circ \zeta^\varepsilon.
 \end{displaymath}
 Then $\xi^\varepsilon-\xi = \varepsilon(\alpha-\breve{\alpha}) \circ \zeta^\varepsilon$ 
 so that $\xi \leq \xi^\varepsilon \leq \xi + 2\varepsilon \tau$.\\
 \end{proof}
 
 \begin{proof}[Proof of Theorem~\ref{thm:EqDist}]
  The theorem is a direct consequence of claims 1 and 2.

 \emph{Claim 1}: We have
  \begin{displaymath}
   \norm{[z_1]-[z_2]}_\infty
	= \inf\seq{\Phi[\tilde{z}_1,\tilde{z}_2](\iota,\iota)\,: 
	\tilde{z}_1 \sim z_1, \tilde{z}_2 \sim z_2}.
  \end{displaymath}
 By Lemma~\ref{lemma:PhiInf}~(i),
 \begin{align*}
  \norm{[z_1]_{\tau,M^1_\pm}-[z_2]_{\tau,M^2_\pm}}_\infty 
      &= \inf\seq{\norm{\tilde{z}_1-\tilde{z}_2}_\infty\,: 
      \tilde{z}_1 \overset{\tau,M^1_\pm}{\sim} z_1,
      \tilde{z}_2 \overset{\tau,M^2_\pm}{\sim} z_2}\\
      &\geq \underbrace{\inf\seq{\Phi[\tilde{z}_1,\tilde{z}_2](\iota,\iota)\,: 
      \tilde{z}_1 \overset{\tau,M^1_\pm}{\sim} z_1,
      \tilde{z}_2 \overset{\tau,M^2_\pm}{\sim} z_2}}_{I}.
 \end{align*}
 Let $\varepsilon > 0$. Pick $\tilde{z}_1,\tilde{z}_2 \in C_0([0,\tau])$ such 
 that $\tilde{z}_1 \overset{\tau,M^1_\pm}{\sim} z_1,\tilde{z}_2 
 \overset{\tau,M^2_\pm}{\sim} z_2$ satisfy
 \begin{displaymath}
  I + \varepsilon > \Phi[\tilde{z}_1,\tilde{z}_2](\iota,\iota).
 \end{displaymath}
 For any $\delta > 0$, define
 \begin{displaymath}
 	\alpha_i^\delta(t) 
	= 
	\begin{cases}
		(1+\delta)t,  	& t \in [0,\tau/(1+\delta)],\\
		\tau,			& t \in [\tau/(1+\delta),\tau],
	\end{cases} 
 \end{displaymath}
 for $i = 1,2$. Then, for any  $\delta_0,\eta_0 > 0$, 
 there exist $0< \delta < \delta_0, 0 < \eta < \eta_0$ such that  
 \begin{displaymath}
  \mathcal{T}_{\tilde{z}_1 \circ \alpha_1^\delta}^+ \cap \mathcal{T}_{\tilde{z}_2 
  \circ \alpha_2^\eta}^- = \emptyset, \qquad \mathcal{T}_{\tilde{z}_1 
  \circ \alpha_1^\delta}^- \cap \mathcal{T}_{\tilde{z}_2 \circ \alpha_2^\eta}^+ 
  = \emptyset,
 \end{displaymath}
 and $\alpha_i^{-1}(\tau) < \tau$ for $i = 1,2$.
Consequently, by Lemma~\ref{lemma:PhiProp}~(ii) 
and \ref{lemma:PhiInf}~(ii) there exist 
$\delta,\eta>0$ and $\tilde{z}_1,\tilde{z}_2$ with 
$\tilde{z}_1 \overset{\tau,M^1_\pm}{\sim} z_1,\tilde{z}_2 
\overset{\tau,M^2_\pm}{\sim} z_2$ such that  
\begin{displaymath}
  \abs{\Phi[\tilde{z}_1,\tilde{z}_2](\alpha_1^\delta,\alpha_2^\eta)
  -\Phi[\tilde{z}_1,\tilde{z}_2](\iota,\iota)} \leq \varepsilon, \qquad 
  \Phi[\tilde{z}_1,\tilde{z}_2](\alpha_1^\delta,\alpha_2^\eta) 
  = \norm{\tilde{z}_1-\tilde{z}_1}_\infty.
\end{displaymath}
As a result,
 \begin{displaymath}
  I + 2\varepsilon > \Phi[\tilde{z}_1,\tilde{z}_2](\alpha_1^\delta,\alpha_2^\eta) 
  = \norm{\tilde{z}_1^\alpha-\tilde{z}_1^\alpha}_\infty  
  \geq \norm{[z_1]_{\tau,M^1_\pm}-[z_2]_{\tau,M^2_\pm}}_\infty,
 \end{displaymath}
 finishing the proof of Claim~1. 

 \emph{Claim~2}: Define
 \begin{align*}
  \mathscr{I}_1[z_1,z_2] &:= 
  \seq{\Phi[z_1,z_2](\alpha,\beta)\,:\, \alpha,\beta \in \mathscr{A}},
  \\
  \mathscr{I}_2[z_1,z_2] &:= \seq{\Phi[\tilde{z}_1,\tilde{z}_2](\iota,\iota)\,:\, 
  \tilde{z}_1 \overset{\tau,M^1_\pm}{\sim_\circ} z_1, \tilde{z}_2 
  \overset{\tau,M^2_\pm}{\sim_\circ} z_2}.
 \end{align*}
 Then $\mathrm{cl}(\mathscr{I}_1[z_1,z_2]) = \mathscr{I}_2[z_1,z_2]$. 
 Therefore, $\inf \mathscr{I}_1 = \inf \mathscr{I}_2$.

We first prove that $\mathscr{I}_1[z_1,z_2] \subset \mathscr{I}_2[z_1,z_2]$. 
For any $s \in \mathscr{I}_1$ there exists 
$\alpha_1,\alpha_2 \in \mathscr{A}$ such that 
$s = \Phi[z_1,z_2](\alpha_1,\alpha_2)$. 
Let $\tilde{z}_i = z_i \circ \alpha_i$, $i = 1,2$. 
It follows that $\rho_{z_i}^\pm \circ \alpha_i 
= \rho_{\tilde{z}_i}^\pm \circ \iota$ for $i = 1,2$. 
Thus, $\tilde{z}_1 \overset{\tau,M^1_\pm}{\sim_\circ} z_1$ 
and $\tilde{z}_2 \overset{\tau,M^2_\pm}{\sim_\circ} z_2$. 
By Lemma~\ref{lemma:PhiProp} (i) and (ii), $s = \Phi[z_1,z_2](\alpha_1,\alpha_2) 
= \Phi[\tilde{z}_1,\tilde{z}_2](\iota,\iota) \in \mathscr{I}_2[z_1,z_2]$.
 
 Now, suppose $s \in \mathscr{I}_2$, that is, 
 $s = \Phi[\tilde{z}_1,\tilde{z}_2](\iota,\iota)$ for $\tilde{z}_1 
 \overset{\tau,M^1_\pm}{\sim_\circ} z_1$ and $\tilde{z}_2 
 \overset{\tau,M^2_\pm}{\sim_\circ} z_2$.  Hence, there 
 exist $\alpha_i,\beta_i \in \mathscr{A}_\tau$ such 
 that $\rho_{z_i}^\pm \circ \alpha_i = \rho_{\tilde{z}_i}^\pm \circ \beta_i$ for $i = 1,2$. 
 Since $0 \leq M_\pm^i < \infty$ we might as well assume that $\beta_i(0) < \beta_i(t)$ 
 for any $0 < t \leq \tau$. Hence, by Lemma~\ref{lemma:UpperApprox}, there exist 
 sequences $\seq{\zeta_i^\varepsilon}_{\varepsilon > 0} \subset \mathscr{A}_\tau$ 
 such that $\beta_i \circ \zeta_i^\varepsilon \downarrow \iota$ 
 uniformly as $\varepsilon \downarrow 0$ for $i = 1,2$. 
 By Lemma~\ref{lemma:PhiProp}~(i) and (iii),
 \begin{align*}
  	s^{\varepsilon,\eta} 
      &:= \Phi[\tilde{z}_1,\tilde{z}_2](\beta_1 
      \circ \zeta_1^{\varepsilon},\beta_2 \circ \zeta_2^{\eta}) \\
      &\;= \Phi[z_1,z_2](\alpha_1 \circ \zeta_1^{\varepsilon},\alpha_2 
      \circ \zeta_2^{\eta}) \in \mathscr{I}_1[z_1,z_2].
\end{align*}
 By Lemma~\ref{lemma:PhiProp}~(ii), there exist 
 subsequences $\seq{\varepsilon_j}_{j \geq 0},\seq{\eta_j}_{j \geq 0}$ 
 such that $\varepsilon_j,\eta_j \downarrow 0$ 
 and $s^{\varepsilon_j,\eta_j} \rightarrow s$, as $j \rightarrow \infty$.
 \end{proof}

\end{document}